\newtheorem{theo}{Theorem}
\newtheorem{lem}{Lemma}[section]
\newtheorem{coro}{Corollary}[section]
\newtheorem{propo}{Proposition}[section]
\newtheorem{example}{Example}[section]
\theoremstyle{definition}
\newtheorem{remark}[section]{Remark}
\numberwithin{equation}{section}
\newtheorem{lemmaletra}{Lemma}
\newtheorem{propoletra}{Proposition}
\def\R{\mathbb{R}}
\def\C{\mathbb{C}}
\def\D{\mathbb{D}}
\def\E{\mathbf{E}}
\def\V{\mathbf{V}}
\def\P{\mathbf{P}}
\def\K{\mathcal{K}}
\title{Some Gaussianity criteria for Khinchin families}
\author[V.\,J. Maci\'a]{V\'{\i}ctor J. Maci\'a}
\address[V\'{\i}ctor J. Maci\'a]{Departamento de Matem\'aticas, CUNEF Universidad, Spain.}
\email{victor.macia@cunef.edu}
\subjclass{30B10, 30D20, 60E05, 60F05}
\keywords{Power series with non-negative coefficients, Power series distributions, moments, Khinchin families, Gaussian Khinchin families}
\begin{document}
	
\maketitle	
\vspace{-3em}
\begin{abstract}In this note we establish simple and verifiable analytical conditions for a power series $f$ in the class $\K$, and its associated Khinchin family, to be Gaussian. We give several moment criteria for {general power series} with non-negative coefficients in $\K$ and also provide several applications of these criteria.
\end{abstract}

\section{Introduction}

To each power series $f(z) = \sum_{n = 0}^{\infty}a_nz^n \in \K$, with non-negative Taylor coefficients and having radius of convergence $R>0$ (see the definition of the class $\K$ below), we can assign a family of discrete random variables $(X_t)_{t \in [0,R)}$ with values in the non-negative integers. For each $t \in (0,R)$ the random variable $X_t$ has mass function
\begin{align*}
\P(X_t = n) = \frac{a_n t^n}{f(t)}\,, \quad \text{ for any integer } n \geq 0\,,
\end{align*}
and we define $X_0 \equiv 0$.
The theory of Khinchin families examines the behavior of this family of random variables, and its normalized version, as $t \uparrow R$, see \cite{CFFM1}, \cite{CFFM2} and \cite{CFFM3} for further details.
\medskip

For each \( t \in (0, R) \), we denote the normalization of \( X_t \) by $\breve{X}_t = {(X_t - \E(X_t))}/{\sqrt{\V(X_t)}}$,
where \( \E \) and \( \V \) represent the expectation and variance, respectively. We say that a power series \( f \in \K \), or its associated Khinchin family, is Gaussian if 
\[
\breve{X}_t \stackrel{d}{\rightarrow} \text{N}(0,1), \quad \text{as } t \uparrow R\,,
\]
where \( \stackrel{d}{\rightarrow} \) denotes convergence in distribution and $\text{N}(0,1)$ means standard normal. Equivalently, by virtue of Lévy's continuity theorem, this holds if and only if we have the following pointwise convergence of the characteristic function:
\[
\lim_{t \uparrow R} \E(e^{i\theta \breve{X}_t}) =e^{-\theta^2/2}, \quad \text{for any } \theta \in \R.
\]

The question of whether a power series in the class $\K$ is Gaussian or not is central in the theory of Khinchin families. Some of the most relevant classes of functions in this theory, such as the class of strongly Gaussian power series and the Hayman functions, are contained in the class of Gaussian power series, see, for instance, \cite{CFFM1} for the definition of these classes and also for a proof of this fact. 
\medskip

This question is significant in its own right as well; classifying the limit distributions of Khinchin families associated with power series having non-negative coefficients (specifically, power series in the class $\K$) is an interesting question.

\medskip
In this note we provide simple and verifiable analytical conditions for a power series $f \in \K$, and its associated Khinchin family, to be Gaussian. 
We establish moment criteria for power series with non-negative coefficients in \( \K \) and present several applications of these results. For instance, we prove that the ordinary generating function (OGF) for the partitions of integers with parts in a set \( A \) of positive density is Gaussian, and that entire functions of genus 0 with negative zeros \( f \in \mathcal{K} \), satisfying \( \lim_{t \to +\infty} \sigma_f^2(t) = +\infty \), are also Gaussian.

\subsection{Asymptotic notations.}
For two function $\alpha$ and $\beta$, we say that $\alpha$ and $\beta$ are asymptotically equivalent, as $t \uparrow R$, denoted $\alpha(t) \sim \beta(t)$, as $t \uparrow R$, if
\begin{align*}
\lim_{t \uparrow R}\frac{\alpha(t)}{\beta(t)} = 1\,.
\end{align*}

Similarly, we say that $\alpha$ and $\beta$ are comparable, as $t \uparrow R$, denoted $\alpha(t) \asymp \beta(t)$, as $t \uparrow R$, if there are constants $C,c>0$ such that 
\begin{align*}
c\beta(t) \leq \alpha(t) \leq C\beta(t)\,,\quad \text{ as }t\uparrow R.
\end{align*}

\subsection*{Acknowledgements.} The author extends his gratitude to Prof. José L. Fernández for his invaluable feedback on an earlier draft of this paper and for the many insightful discussions shared over the years.

 \section{Khinchin families}

The fundamental aspects of the theory of Khinchin families required for the proofs in this note are introduced in this section. For a quite detailed presentation see \cite{tesis} and also \cite{CFFM1}, \cite{CFFM2} and \cite{CFFM3}.
\medskip

We denote by $\K$ the class of non-constant power series
$$f(z)=\sum_{n=0}^\infty a_n z^n$$  with positive radius of convergence $R>0$,  which have non-negative Taylor coefficients and such that  $a_0>0$. Since $f \in \K$ is non-constant, at least one coefficient other than $a_0$ is positive.
\medskip

The \textit{Khinchin  family} of  such a  power series $f \in \K$ with radius of convergence $R>0$ is the family of random variables $(X_t)_{t \in [0,R)}$ with values in $\{0, 1, \ldots\}$ and with mass functions given by
$$
\P(X_t=n)=\frac{a_n t^n}{f(t)}\, , \quad \mbox{for each $n \ge 0$ and $t \in (0,R)$}\, .$$  Notice that $f(t)>0$ for each $t \in[0,R)$. We define $X_0\equiv 0$. 
Formally, the definition of $X_0$ is consistent with the general expression for $t \in (0,R)$, with the convention that $0^0 = 1$, meaning that $\P(X_0 = 0) = 1$.
\subsection{Mean and variance functions.} \label{section: mean and variance}

For the mean and variance of $X_t$, we reserve the notation
$m_f(t)=\E(X_t)$ and $\sigma_f^2(t)=\V(X_t)$, for $t \in [0,R)$. In terms of the power series $f \in \K$, the  mean and the variance of $X_t$ may be  written as
\begin{align}\label{eq:m and sigma in terms of f}
	m_f(t)=\frac{t f^\prime(t)}{f(t)} = t\frac{d}{dt}\ln(f(t)), \qquad \sigma_f^2(t)=t m_f^\prime(t)\, , \quad \mbox{for $t \in [0,R)$}\,.
\end{align}

\subsection{Normalization and characteristic functions.}

The characteristic function of $X_t$ may be written in terms of the power series $f$ as:
$$\E(e^{i \theta X_t})=\sum_{n = 0}^{\infty}e^{i\theta n}\frac{a_n t^n}{f(t)} = \frac{f(te^{i \theta})}{f(t)}\, , \quad \mbox{for $t\in (0,R)$ and $\theta \in \R$}\,, $$
while for its normalized version $\breve{X}_t$ we have:
$$\E(e^{i \theta \breve{X}_t})=\E(e^{i \theta X_t/\sigma_f(t)}) e^{-i \theta m_f(t)/\sigma_f(t)}\, , \quad \mbox{for $t\in (0,R)$ and $\theta \in \R$}\,.$$

\subsection{Gaussian power series.} For a power series $f \in \K$, with radius of convergence $R > 0$, we say that $f$, or its associated Khinchin family $(X_t)$, is Gaussian if 
	\begin{align*}
	\breve{X}_t \stackrel{d}{\rightarrow} \text{N}(0,1), \quad \text{ as } t\uparrow R\,,
\end{align*}
or, equivalently, if we have the pointwise limit
\begin{align*}
	\lim_{t \uparrow R}\E(e^{i\theta\breve{X}_t}) = e^{-\theta^2/2}, \quad  \text{ for any } \theta \in \R\,.
\end{align*}

The equivalence above is given by Lévy's continuity theorem; see, for instance, \cite{Durrett}. Recall that $e^{-\theta^2/2}$ is the characteristic function of the standard normal distribution. Here we also use the fact that the distribution function of a standard normal random variable is continuous everywhere.

\subsection{Fulcrum of a power series.}\label{section:auxiliary F} A power series $f$ in $\K$ does not vanish on the real interval $[0,R)$, and so, it does not vanish in a simply connected region containing that interval.  We may consider $\log(f)$, a branch of the logarithm of $f$ which is real on $[0,R)$, and the  function $F$, called the \textit{fulcrum} of $f$, which is  defined and holomorphic in a region containing $(-\infty, \ln R)$ and it is given by
$$F(z)=\log( f(e^z))\, .$$

In terms of the fulcrum $F$ of $f$, the mean and variance functions of the Khinchin family associated to $f$ may be expressed as follows:
\begin{align*}
	m_f(e^s) = F^{\prime}(s) \quad \text{ and } \quad \sigma_f^2(e^s) = F^{\prime\prime}(s), \quad \text{ for } s < \ln(R).
\end{align*}

If $f$ does not vanish anywhere in the disk $\D(0,R)$, then the fulcrum $F(z)$ is defined everywhere in the whole half plane $\Re(z)< \ln R$.

\subsubsection{Zero-free region}

The following proposition gives the zero-free region of $f$ in terms of the variance function of the Khinchin family.
\begin{propoletra}\label{prop:ceros de funcion f de K}
	Let $f\in \mathcal{K}$ have radius of convergence $R>0$.  If for some $t\in [0,R)$ and some $\theta \in [-\pi, \pi]$, we have $f(te^{i \theta})=0$,  then
	$$
	|\theta|\sigma_f(t)\ge \frac{\pi}{2}\, .
	$$
	Thus, $f\in\mathcal{K}$ does not vanish in
	$$
	\Omega_f=\Big\{z=t e^{i \theta}: t \in [0,R) \text{ and } |\theta|< \frac{\pi}{2\sigma_f(t)}   \Big\}\,.
	$$
\end{propoletra}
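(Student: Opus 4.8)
The plan is to estimate the second moment $\E(X_t^2)$ — or rather the Taylor coefficients of $f$ weighted by $n^2$ — and exploit the fact that if $f(te^{i\theta}) = 0$ then the cosine sum $\sum_n a_n t^n \cos(n\theta)$ and the sine sum $\sum_n a_n t^n \sin(n\theta)$ both vanish. Concretely, fix $t \in (0,R)$ and $\theta \in (0,\pi]$ with $f(te^{i\theta}) = 0$ (the case $t = 0$ is trivial since then $f(0) = a_0 > 0$, and we may take $\theta > 0$ by symmetry of the coefficients being real). Dividing by $f(t) > 0$, vanishing of the real part says $\E(\cos(\theta X_t)) = 0$. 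The strategy is to derive a contradiction with $\E(\cos(\theta X_t)) = 0$ whenever $|\theta|\sigma_f(t) < \pi/2$, using a quantitative lower bound for $\E(\cos(\theta X_t))$ in terms of the variance.

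First I would recall the elementary inequality $\cos x \ge 1 - x^2/2$ valid for all real $x$; applying it with $x = \theta(X_t - m_f(t))$ would be the natural move, but $\cos(\theta X_t) \ne \cos(\theta(X_t - m))$, so instead I would use $\E(\cos(\theta X_t)) = \Re\big(e^{i\theta m_f(t)}\E(e^{i\theta(X_t - m_f(t))})\big)$, which reduces matters to controlling $|\E(e^{i\theta(X_t-m_f(t))}) - 1|$. By the inequality $|e^{iu} - 1 - iu| \le u^2/2$ together with $\E(X_t - m_f(t)) = 0$, one gets
\[
\big|\E(e^{i\theta(X_t - m_f(t))}) - 1\big| \;\le\; \frac{\theta^2}{2}\,\E\big((X_t - m_f(t))^2\big) \;=\; \frac{\theta^2 \sigma_f^2(t)}{2}\,.
\]
Hence $\E(e^{i\theta(X_t - m_f(t))})$ lies in the closed disk of radius $\theta^2\sigma_f^2(t)/2$ about $1$. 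If $\theta^2\sigma_f^2(t)/2 < 1$, this quantity has strictly positive real part; but we need a statement about $\E(\cos(\theta X_t)) = \Re(e^{i\theta m_f}\,\E(e^{i\theta(X_t-m_f)}))$, not about the real part of $\E(e^{i\theta(X_t-m_f)})$ alone. So the cleaner route is: write $\zeta = \E(e^{i\theta(X_t - m_f(t))})$, note $|\zeta - 1| \le \theta^2\sigma_f^2(t)/2$, and observe that $0 = \E(e^{i\theta X_t}) = e^{i\theta m_f(t)}\zeta$ forces $\zeta = 0$, which contradicts $|\zeta - 1| \le \theta^2\sigma_f^2(t)/2 < 1$. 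This already gives $\theta^2\sigma_f^2(t)/2 \ge 1$, i.e. $|\theta|\sigma_f(t) \ge \sqrt 2$, which is even stronger than the claimed $\pi/2$ — so I would expect the actual argument in the paper to be slightly different, perhaps using only the real part and the sharper bound $1 - \cos x \le x^2/2$ applied directly, or a phase argument; in any case the bound $\pi/2$ is what is asserted, and the mechanism is the same.

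Since the claimed constant is $\pi/2$ rather than $\sqrt 2$, the refined plan is to work with the real part directly and avoid discarding information. Observe that vanishing of $f(te^{i\theta})$ gives $\E(\cos(\theta X_t)) = 0$. Now shift: for the true mean $m = m_f(t)$,
\[
0 = \E(\cos(\theta X_t)) = \E\big(\cos(\theta m + \theta(X_t - m))\big) = \cos(\theta m)\,\E(\cos(\theta(X_t-m))) - \sin(\theta m)\,\E(\sin(\theta(X_t - m)))\,.
\]
Here is where a small additional idea enters: one is free to replace $\theta$ by $\theta$ and $m$ by any real shift, but more usefully one can use that $f(te^{i\theta}) = 0$ also implies $f(t e^{i\theta})\,\overline{(\cdots)}$ relations; alternatively, combine the real-part equation with the bound $\E(\cos(\theta(X_t - m))) \ge 1 - \tfrac12 \theta^2\sigma_f^2(t)$ (from $\cos x \ge 1 - x^2/2$) and $|\E(\sin(\theta(X_t-m)))| \le \theta\,\E|X_t - m| \le \theta\sigma_f(t)$ (Cauchy–Schwarz). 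Setting $u = |\theta|\sigma_f(t)$ and assuming for contradiction $u < \pi/2$, one derives an inequality of the form $\cos(\theta m)(1 - u^2/2) \le |\sin(\theta m)|\,u$, i.e. $\tan$ of something is bounded below, and a short optimization over the unknown phase $\theta m$ yields $u \ge \pi/2$. The main obstacle — and the step I would spend the most care on — is precisely pinning down this phase optimization so that the constant comes out to be exactly $\pi/2$ and not merely some explicit constant less than it; getting the sharp constant likely requires the exact inequality $\cos x \ge 1 - \tfrac{2}{\pi}|x|$ on a suitable range, or a comparison of the characteristic function with that of a two-point distribution, rather than the crude Taylor bound. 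Once the bound $|\theta|\sigma_f(t) \ge \pi/2$ is established, the description of the zero-free region $\Omega_f$ is immediate: if $z = te^{i\theta} \in \Omega_f$ then $|\theta| < \pi/(2\sigma_f(t))$, hence $|\theta|\sigma_f(t) < \pi/2$, so $f(z) \ne 0$ by the contrapositive (noting $f > 0$ on $[0,R)$ handles $\theta = 0$, and $|\theta|$ can be reduced mod $2\pi$ into $[-\pi,\pi]$).
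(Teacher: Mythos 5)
There is a genuine gap, and it is quantitative. Note first that the paper itself does not prove this proposition (it defers to \cite{CFFM3}), so your argument must stand on its own. Your first argument is sound as far as it goes: since $e^{i\theta m_f(t)}\neq 0$, the vanishing of $f(te^{i\theta})$ forces $\zeta=\E\bigl(e^{i\theta(X_t-m_f(t))}\bigr)=0$, and $|\zeta-1|\le \tfrac12\theta^2\sigma_f^2(t)$ then gives $|\theta|\sigma_f(t)\ge\sqrt2$. But $\sqrt2\approx 1.414$ is \emph{smaller} than $\pi/2\approx 1.571$, not larger; your remark that this is ``even stronger than the claimed $\pi/2$'' is a numerical slip, and what you have actually proved is a strictly weaker zero-free region than the one asserted.

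The refined plan does not close this gap. Because $\zeta=0$, both $\E\cos(\theta(X_t-m_f(t)))=0$ and $\E\sin(\theta(X_t-m_f(t)))=0$; the phase $\theta m_f(t)$ factors out completely, so there is no ``unknown phase'' left to optimize over, and any argument that only uses the quadratic bound $\cos x\ge 1-x^2/2$ collapses back to $\theta^2\sigma_f^2(t)\ge 2$. Moreover, the fallback inequality you suggest, $\cos x\ge 1-\tfrac{2}{\pi}|x|$, is false for $|x|\in(\pi/2,\pi)$ (take $x=3\pi/4$), and $\theta(X_t-m_f(t))$ is unbounded, so it cannot be invoked. To reach the sharp constant one needs a genuinely sharper pointwise inequality; for instance, $x^2+\pi\cos x\ge \pi^2/4$ for all real $x$, with equality exactly at $x=\pm\pi/2$ (check: the derivative $2x-\pi\sin x$ vanishes on $[0,\infty)$ only at $0$ and $\pi/2$, and $\pi/2$ is the global minimum). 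Applying it to $x=\theta(X_t-m_f(t))$ and taking expectations, the identity $\E\cos(\theta(X_t-m_f(t)))=0$ immediately yields $\theta^2\sigma_f^2(t)\ge \pi^2/4$, which is the claimed bound; the extremal two-point law $\theta(X_t-m_f(t))=\pm\pi/2$ with probability $1/2$ shows $\pi/2$ cannot be improved, confirming that no argument based solely on $\cos x\ge 1-x^2/2$ could have sufficed. Your final deduction of the zero-free region $\Omega_f$ from the displayed bound, and the handling of the trivial cases $t=0$ and $\theta=0$, are fine.
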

For a proof of this result, see, for instance, \cite{CFFM3}.

\subsection{Characteristic function of the family $(\breve{X}_t)$ and the fulcrum.}

	 Let $f \in \K$ be a power series with radius of convergence $R > 0$ and Khinchin family $(X_t)$. Fix $t = e^s$, for $s<\ln(R)$. Proposition \ref{prop:ceros de funcion f de K} states that the characteristic function of $\breve{X}_t$ is never zero for $|\theta| < \pi/2$. 
\smallskip

For any fixed $s<\ln(R)$ this bound also implies that the function
$
F(s+i\theta/\sigma_f(t))
$
is well defined for any $|\theta|<\pi/2$. This follows from the fact that $f(e^se^{i\theta/\sigma_f(t)}) = f(e^s)\E(e^{i\theta X_t/\sigma_f(t)})$ is never zero for any fixed $s < \ln(R)$ and any $|\theta| < \pi/2$, see Proposition \ref{prop:ceros de funcion f de K}. From the previous discussion we obtain the following lemma.
\medskip

\begin{lem}\label{lemma: equality_fulcrum_characteristic} Let $f \in \K$ a power series with radius of convergence $R>0$.
For $t = e^s$, with $s < \ln(R)$ fixed, and any $|\theta| < \pi/2$, we have the following relations
\begin{align}\label{eq: characteristica_fulcrum}
	\E(e^{i\theta \breve{X}_t}) &= \exp\left({F(s+i\theta/\sigma_f(t))-F(s)-\frac{F^{\prime}(s)}{F^{\prime\prime}(s)^{1/2}}i\theta}\right)\,, \\
\intertext{and} \label{eq: log_characteristic_fulcrum}
	\ln(\E(e^{i\theta\breve{X}_t})) &= F(s+i\theta/\sigma_f(t))-F(s)-\frac{F^{\prime}(s)}{F^{\prime\prime}(s)^{1/2}}i\theta\,.
\end{align}
For any fixed $t = e^s \in (0,R)$ the functions given by \eqref{eq: characteristica_fulcrum} and \eqref{eq: log_characteristic_fulcrum} are $C^{\infty}$ with respect to $\theta$.
\end{lem}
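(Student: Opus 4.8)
The plan is to derive the two displayed identities \eqref{eq: characteristica_fulcrum} and \eqref{eq: log_characteristic_fulcrum} directly from the definition of the fulcrum and the already-recorded expression for the characteristic function of $\breve X_t$, and then to obtain the $C^\infty$ regularity in $\theta$ as a consequence of the holomorphy of $F$ together with the non-vanishing of the characteristic function on the strip $|\theta|<\pi/2$. First I would fix $t = e^s$ with $s < \ln R$ and recall from the subsection on normalization and characteristic functions that
\[
\E(e^{i\theta \breve X_t}) = \E(e^{i\theta X_t/\sigma_f(t)})\, e^{-i\theta m_f(t)/\sigma_f(t)} = \frac{f(e^{s}e^{i\theta/\sigma_f(t)})}{f(e^s)}\, e^{-i\theta m_f(t)/\sigma_f(t)}.
\]
By Proposition~\ref{prop:ceros de funcion f de K}, for $|\theta|<\pi/2$ the point $e^{s}e^{i\theta/\sigma_f(t)}$ lies in $\Omega_f$, so $f$ does not vanish there; hence $\log f$ can be continued along the segment from $e^s$ to $e^{s}e^{i\theta/\sigma_f(t)}$, which is exactly what makes $F(s + i\theta/\sigma_f(t)) = \log f(e^{s}e^{i\theta/\sigma_f(t)})$ well defined, as noted in the paragraph preceding the lemma. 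Substituting $f(e^s) = e^{F(s)}$, $f(e^{s}e^{i\theta/\sigma_f(t)}) = e^{F(s + i\theta/\sigma_f(t))}$, and $m_f(e^s)/\sigma_f(e^s) = F'(s)/F''(s)^{1/2}$ (using $m_f(e^s)=F'(s)$ and $\sigma_f^2(e^s)=F''(s)$ from the fulcrum subsection) into the display above immediately yields \eqref{eq: characteristica_fulcrum}; taking the branch of logarithm that is real at $\theta=0$ (where both sides equal $0$) gives \eqref{eq: log_characteristic_fulcrum}.

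For the regularity claim I would argue as follows. The map $\theta \mapsto s + i\theta/\sigma_f(t)$ is real-analytic (indeed affine) in $\theta$, and $F$ is holomorphic on a neighborhood of the segment $\{s + i\theta/\sigma_f(t) : |\theta|<\pi/2\}$ — this is precisely the content of the zero-free region $\Omega_f$ of Proposition~\ref{prop:ceros de funcion f de K} transported through the exponential, together with the definition of the fulcrum as a holomorphic function on a region containing the relevant set. Therefore $\theta \mapsto F(s + i\theta/\sigma_f(t)) - F(s) - F'(s)F''(s)^{-1/2} i\theta$ is a composition of real-analytic and holomorphic maps, hence $C^\infty$ (in fact real-analytic) in $\theta$ on $(-\pi/2,\pi/2)$; composing with the entire function $\exp$ preserves this, giving the $C^\infty$ statement for \eqref{eq: characteristica_fulcrum}. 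For \eqref{eq: log_characteristic_fulcrum} one just notes it is the same function without the outer exponential.

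I do not expect a genuine obstacle here: the lemma is essentially a bookkeeping consequence of the definitions of the fulcrum $F$, the normalization $\breve X_t$, and the zero-free region $\Omega_f$. The only point requiring a little care is making sure the branch of the logarithm is chosen consistently — namely that the continuation of $\log f$ used to define $F$ along the vertical segment agrees with the principal value at $\theta = 0$ so that \eqref{eq: log_characteristic_fulcrum} holds as an honest equality of holomorphic functions rather than merely up to a multiple of $2\pi i$. This is guaranteed by the non-vanishing of $f$ on $\Omega_f$ (so the segment stays in a simply connected zero-free region) and by continuity in $\theta$ starting from the value $0$ at $\theta=0$; I would state this explicitly when setting up the identity. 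Everything else is substitution.
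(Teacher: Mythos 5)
Your proposal is correct and follows essentially the same route as the paper, which treats the lemma as an immediate consequence of the preceding discussion: the formula $\E(e^{i\theta X_t})=f(te^{i\theta})/f(t)$, the fulcrum identities $m_f(e^s)=F'(s)$, $\sigma_f^2(e^s)=F''(s)$, and Proposition~\ref{prop:ceros de funcion f de K} guaranteeing that $f$ does not vanish at $e^{s}e^{i\theta/\sigma_f(t)}$ for $|\theta|<\pi/2$, so that $F(s+i\theta/\sigma_f(t))$ and the continuous branch of the logarithm chosen to vanish at $\theta=0$ are well defined. Your explicit attention to the branch consistency and the smoothness-via-holomorphy argument only spells out what the paper leaves implicit.
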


\subsection{Expressions for the moments of $(\breve{X}_t)$ in terms of the fulcrum.} The next lemma relates the moments of the normalized Khinchin family $(\breve{X}_t)$ and the derivatives of the fulcrum of a power series $f \in \K$.

\begin{theo}\label{lemma: formula_faa_di_bruno_Fulcrum_momentos_tipificados} Let $f \in \K$ be a power series with radius of convergence $R > 0$ and denote $(X_t)$ the Khinchin family associated to $f$, then, for any $t = e^s \in (0,R)$ and any $n \geq 3$, we have
	\begin{align}\label{eq: moments_normalized_Khinchin_fulcrum}
		\E(\breve{X}_t^n) = \sum_{2m_2+\dots+nm_n = n}\frac{n!}{m_2!\dots m_n!}\prod_{j = 1}^{n}\left(\frac{F^{(j)}(s)}{F^{\prime\prime}(s)^{j/2}}\frac{1}{j!} \right)^{m_j}\,,
	\end{align}
	and 
	\begin{align}\label{eq: quotient_derivatives_fulcrum_terms_moments_normalized}
		\frac{F^{(n)}(s)}{F^{\prime\prime}(s)^{n/2}} = \sum_{2m_2+\dots +nm_n = n}\frac{n! (l_n-1)!(-1)^{l_n+1}}{m_2!\dots m_n!}\prod_{j = 1}^{n}\left(\frac{\E(\breve{X}_t^j)}{j!}\right)^{m_j}\,,
	\end{align}
	here $l_n = m_2+\dots+m_n$. 
\end{theo}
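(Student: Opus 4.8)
The plan is to obtain both identities by differentiating, at $\theta = 0$, the two relations of Lemma \ref{lemma: equality_fulcrum_characteristic} and invoking Fa\`a di Bruno's formula in its general form for the compositions $\exp\circ(\,\cdot\,)$ and $\ln\circ(\,\cdot\,)$ that appear there. Write $\phi(\theta) = \E(e^{i\theta\breve{X}_t})$ and $G(\theta) = \ln\phi(\theta)$; by Lemma \ref{lemma: equality_fulcrum_characteristic} both are $C^\infty$ near $\theta = 0$, and there
$$G(\theta) = F\!\left(s + \frac{i\theta}{\sigma_f(t)}\right) - F(s) - \frac{F'(s)}{F''(s)^{1/2}}\,i\theta, \qquad \phi = e^{G},$$
with $\sigma_f(t) = \sigma_f(e^s) = F''(s)^{1/2} > 0$ since $t = e^s \in (0,R)$. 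First I would record the derivative values at $\theta = 0$. Differentiating the explicit formula for $G$ gives $G(0) = 0$, $G'(0) = 0$, and $G^{(j)}(0) = i^j F^{(j)}(s)/F''(s)^{j/2}$ for every $j\geq 2$ (in particular $G''(0) = -1$). On the probabilistic side, since $t < R$ the atoms $a_nt^n/f(t)$ of $X_t$ decay geometrically, so $X_t$, and hence $\breve{X}_t$, has finite moments of every order; differentiating the series defining $\phi$ termwise — legitimate by this moment bound — yields $\phi(0) = 1$, $\phi'(0) = i\,\E(\breve{X}_t) = 0$, and $\phi^{(n)}(0) = i^n\,\E(\breve{X}_t^n)$ for all $n \geq 1$.

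For \eqref{eq: moments_normalized_Khinchin_fulcrum}, apply Fa\`a di Bruno to $\phi = \exp\circ G$ at $\theta = 0$:
$$\phi^{(n)}(0) = \sum_{m_1 + 2m_2 + \cdots + nm_n = n}\frac{n!}{m_1!\cdots m_n!}\,e^{G(0)}\prod_{j=1}^{n}\left(\frac{G^{(j)}(0)}{j!}\right)^{m_j}.$$
Since $e^{G(0)} = 1$ and $G'(0) = 0$ annihilates every term with $m_1 \geq 1$, the constraint collapses to $2m_2 + \cdots + nm_n = n$ (and the $j = 1$ factor in the displayed product of \eqref{eq: moments_normalized_Khinchin_fulcrum} carries the exponent $m_1 = 0$, hence is trivial). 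Substituting $G^{(j)}(0) = i^j F^{(j)}(s)/F''(s)^{j/2}$ and collecting the powers of $i$, which combine to $i^{2m_2 + \cdots + nm_n} = i^n$ and cancel the $i^n$ in $\phi^{(n)}(0) = i^n\E(\breve{X}_t^n)$, gives \eqref{eq: moments_normalized_Khinchin_fulcrum}.

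For \eqref{eq: quotient_derivatives_fulcrum_terms_moments_normalized}, I would run the symmetric argument: apply Fa\`a di Bruno to $G = \ln\circ\phi$ at $\theta = 0$, now using $(\ln)^{(l)}(\phi(0)) = (\ln)^{(l)}(1) = (-1)^{l-1}(l-1)!$ with $l = m_1 + \cdots + m_n$, together with $\phi'(0) = 0$ (which once more forces $m_1 = 0$, so that $l$ becomes $l_n = m_2 + \cdots + m_n$). Replacing $\phi^{(j)}(0) = i^j\E(\breve{X}_t^j)$ and $G^{(n)}(0) = i^n F^{(n)}(s)/F''(s)^{n/2}$, using $(-1)^{l_n-1} = (-1)^{l_n+1}$, and cancelling the common factor $i^n$ as before then yields \eqref{eq: quotient_derivatives_fulcrum_terms_moments_normalized}.

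The argument is essentially bookkeeping once this setup is in place. The two points that demand care are: justifying the termwise differentiation of $\phi$ so that $\phi^{(n)}(0) = i^n\E(\breve{X}_t^n)$, where the geometric decay of the atoms of $X_t$ for $t \in (0,R)$ is exactly the ingredient needed; and tracking the powers of $i$ through Fa\`a di Bruno so that they cancel cleanly in both identities. I expect the $i$-power bookkeeping, rather than any conceptual difficulty, to be the most error-prone step.
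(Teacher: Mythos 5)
Your proposal is correct and follows essentially the same route as the paper: differentiate the two relations of Lemma \ref{lemma: equality_fulcrum_characteristic} at $\theta=0$ and apply Fa\`a di Bruno's formula to $\exp\circ G$ and $\ln\circ\phi$, with the vanishing of $G'(0)$ and $\phi'(0)$ collapsing the index constraint to $2m_2+\dots+nm_n=n$. You merely spell out details the paper leaves implicit (the $i$-power bookkeeping, the evaluation $(\ln)^{(l)}(1)=(-1)^{l-1}(l-1)!$, and the justification that $\phi^{(n)}(0)=i^n\E(\breve{X}_t^n)$), all of which are correct.
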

\begin{proof}We compute the derivatives of any order of \eqref{eq: characteristica_fulcrum} and \eqref{eq: log_characteristic_fulcrum} at $\theta = 0$. 
\medskip

Assume that $f = e^g$, where $g$ is a $C^{\infty}$ function such that $g(0) = 0$ and $g^{\prime}(0) = 0$.
A direct application of Faà di Bruno's formula to $f = e^g$, see, for instance, \cite[p. 217]{Warren}, gives that for any $n \geq 1$ we have
\smallskip
\begin{align}\label{eq: FaaDiBruno_exponentials}
\begin{split}
f^{(n)}(0) &= \sum_{m_1+\dots+nm_n = n}\frac{n!}{m_1!\dots m_n!}\prod_{j = 1}^{n}\left(\frac{g^{(j)}(0)}{j!}\right)^{m_j} \\[.15 cm]
&= \sum_{2m_2+\dots+nm_n = n} \frac{n!}{m_1!\dots m_n!}\prod_{j = 1}^{n}\left(\frac{g^{(j)}(0)}{j!}\right)^{m_j}\,.
\end{split}
\end{align}
\smallskip

For any fixed $t \in (0,R)$ and any $|\theta|<\pi/2$, the characteristic function of $\breve{X}_t$ can be written as $e^g$, for some $C^{\infty}$ function $g$ such that $g(0) = 0$ and $g^{\prime}(0) = 0$, see equation (\ref{eq: characteristica_fulcrum}). 
\smallskip

Fix $t = e^s \in (0,R)$, and define the function
\begin{align*}
g(\theta) = F(s+i\theta/\sigma_f(t))-F(s)-\frac{F^{\prime}(s)}{F^{\prime\prime}(s)^{1/2}}i\theta, \quad \text{ for any } |\theta|<\pi/2\,.
\end{align*}
Observe that $g(0) = 0$ and $g^{\prime}(0) = 0$. Combining equation \eqref{eq: FaaDiBruno_exponentials} with Lemma \ref{lemma: equality_fulcrum_characteristic} we obtain that 
	\begin{align*}
	\E(\breve{X}_t^n) = \sum_{2m_2+\dots+nm_n = n}\frac{n!}{m_2!\dots m_n!}\prod_{j = 1}^{n}\left(\frac{F^{(j)}(s)}{F^{\prime\prime}(s)^{j/2}}\frac{1}{j!} \right)^{m_j}\,.
\end{align*}
\medskip

For the other relation: fix $t = e^s \in (0,R)$ and $n \geq 1$. By applying Faà di Bruno formula to \eqref{eq: log_characteristic_fulcrum}, we obtain the $n$-th derivative of \eqref{eq: log_characteristic_fulcrum} at $\theta = 0$:
	\begin{align}\label{eq: quotient_derivatives_fulcrum_terms_moments_normalized}
	\frac{F^{(n)}(s)}{F^{\prime\prime}(s)^{n/2}} = \sum_{2m_2+\dots +nm_n = n}\frac{n! (l_n-1)!(-1)^{l_n+1}}{m_2!\dots m_n!}\prod_{j = 1}^{n}\left(\frac{\E(\breve{X}_t^j)}{j!}\right)^{m_j}\,,
\end{align}
where $l_n = m_2+\dots+m_n$. 

\end{proof}

Let $f \in \K$ a power series with radius of convergence $R>0$. For $n=3$, applying Lemma \ref{eq: moments_normalized_Khinchin_fulcrum}, we obtain that
\begin{align*}
\E(\breve{X}_t^3) = \frac{F^{\prime\prime\prime}(s)}{F^{\prime\prime}(s)^{3/2}},  \quad \text{ for any } t = e^s \in (0,R)\,.
\end{align*}
This tell us that the 3rd central moment of the family $(X_t)$ is given by the 3rd derivative of the fulcrum. However, this is not a general fact; for $n = 4$ we have that 
\begin{align*}
\E(\breve{X}_t^4) = \frac{F^{(4)}(s)}{F^{\prime\prime}(s)^2}+\frac{4!}{2!(2!)^2},\quad \text{ for any } t = e^s \in (0,R)\,,
\end{align*}
and the central moments of $({X}_t)$ cannot be expressed in terms of a single derivative of the fulcrum.
\medskip

The moments of the normal distribution are included in the sum \eqref{eq: moments_normalized_Khinchin_fulcrum}. 
\begin{coro} Let $f \in \K$ a power series with radius of convergence $R>0$ and let $(X_t)$ be its Khinchin family. Fix $t =e^s\in (0,R)$, then
\vspace{.2 cm}
\begin{itemize}
\renewcommand{\labelitemi}{\raisebox{0.3ex}{\scalebox{0.6}{$\blacksquare$}}}
\item For even $n = 2k$ with $k \geq 2$ we have 
	\begin{align*}
	\E(\breve{X}_t^n)-\frac{(2k)!}{k! \, (2!)^k}= \sum_{\substack{2m_2+\dots+nm_n = n \\ m_2 \neq k}}\frac{n!}{m_2!\dots m_n!}\prod_{j = 1}^{n}\left(\frac{F^{(j)}(s)}{F^{\prime\prime}(s)^{j/2}}\frac{1}{j!} \right)^{m_j}\,,
\end{align*}\\[-.3 cm]
\item For odd $n=2k+1$ with $k \geq 1$ we have 
\begin{align*}
		\E(\breve{X}_t^n) = \sum_{2m_2+\dots+nm_n = n}\frac{n!}{m_2!\dots m_n!}\prod_{j = 1}^{n}\left(\frac{F^{(j)}(s)}{F^{\prime\prime}(s)^{j/2}}\frac{1}{j!} \right)^{m_j}\,,
\end{align*}
\end{itemize}
and all the terms in the sums at the right-hand side of the previous expressions are non-constant (with respect to $s$).
\end{coro}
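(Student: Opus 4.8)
The plan is to read the two identities directly off the moment formula \eqref{eq: moments_normalized_Khinchin_fulcrum} of Theorem~\ref{lemma: formula_faa_di_bruno_Fulcrum_momentos_tipificados}, after isolating the unique ``Gaussian'' summand and then proving that every remaining summand is a non-constant function of $s$.

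First I would single out, among the partitions $2m_2+\cdots+nm_n=n$, the distinguished one with $m_2=k$ and $m_3=\cdots=m_n=0$; it occurs precisely when $n=2k$ is even. Its contribution to \eqref{eq: moments_normalized_Khinchin_fulcrum} is
\[
\frac{(2k)!}{k!}\left(\frac{F^{\prime\prime}(s)}{F^{\prime\prime}(s)^{1}}\,\frac{1}{2!}\right)^{\!k}=\frac{(2k)!}{k!\,(2!)^{k}},
\]
which is the $2k$-th moment of $\mathrm{N}(0,1)$ and the only summand not involving a derivative $F^{(j)}$ with $j\ge 3$. Subtracting it in the even case, and noting that for odd $n$ no such summand exists since $2m_2=n$ has no solution, yields the two displayed formulas at once.

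The content of the statement is the non-constancy claim, for which I would examine the behaviour as $s\to-\infty$, i.e.\ as $t=e^{s}\downarrow 0$. Set $l_n=m_2+\cdots+m_n$; the constraint $\sum_{j\ge2}jm_j=n$ together with $j\ge 2$ forces $l_n\le n/2$, with equality exactly for the distinguished partition. Since $f(0)=a_0>0$, the branch $\log f$ is holomorphic on a disc $\D(0,\rho)$, so $\log f(z)=\log a_0+(a_d/a_0)z^{d}+O(z^{d+1})$ with $d=\min\{j\ge 1:a_j>0\}$; differentiating the convergent expansion $F(s)=\log a_0+\sum_{k\ge d}b_k e^{ks}$ term by term (legitimate for $s<\ln\rho$, with $b_d=a_d/a_0\ne 0$) gives $F^{(j)}(s)=(a_d/a_0)\,d^{\,j}e^{ds}(1+o(1))$ for every $j\ge 1$. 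Since moreover $F^{\prime\prime}(s)>0$ on the whole domain, the powers $F^{\prime\prime}(s)^{j/2}$ are well defined, and for any partition in the sum
\[
\prod_{j\ge 2}\left(\frac{F^{(j)}(s)}{F^{\prime\prime}(s)^{j/2}}\right)^{\!m_j}\ \sim\ \Big(\tfrac{a_d}{a_0}\Big)^{\,l_n-n/2}\,e^{\,d\,(l_n-n/2)\,s},\qquad s\to-\infty .
\]
When the partition is not the distinguished one, $l_n<n/2$, so the right-hand side tends to $+\infty$, which no constant does; hence that summand is non-constant. This disposes of every summand kept in the even formula (those with $m_2\ne k$), and since $l_n$ is an integer while $n/2$ is not when $n$ is odd, of every summand in the odd formula as well.

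The only steps requiring attention are the term-by-term differentiation — legitimate because $\log f$ genuinely has a power-series expansion near $0$, so the series and all its derivatives converge for $s<\ln\rho$ — and the positivity of the prefactor $(a_d/a_0)^{\,l_n-n/2}$, which holds since $a_d/a_0>0$; I do not anticipate any genuine obstacle beyond this. The rest is bookkeeping with the multinomial coefficients already present in \eqref{eq: moments_normalized_Khinchin_fulcrum}.
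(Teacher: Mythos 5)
Your proposal is correct, and its first half is exactly the paper's route: both read the two identities off formula \eqref{eq: moments_normalized_Khinchin_fulcrum}, isolate the unique partition $m_2=k$, $m_3=\cdots=m_n=0$ (possible only for even $n=2k$, contributing $(2k)!/(k!\,(2!)^k)$), and note that no such partition exists for odd $n$. Where you genuinely diverge is the non-constancy claim. The paper settles it with the combinatorial observation that every remaining partition has $2\sum_{j\ge 2}m_j<n$, hence some $m_j>0$ with $j\ge 3$, and then simply declares each such term non-constant because it contains a quotient $F^{(j)}/F''^{\,j/2}$ with $j\ge 3$; it never rules out that such a product of quotients might accidentally be constant in $s$ for a particular $f$. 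You use the same combinatorial fact (in the equivalent form $l_n<n/2$) but then actually verify non-constancy: expanding $F(s)=\log a_0+\sum_{k\ge d}b_ke^{ks}$ near $t=e^s\downarrow 0$, with $b_d=a_d/a_0>0$ and $d$ the first positive index with $a_d>0$, you get $F^{(j)}(s)\sim b_d\,d^{\,j}e^{ds}$ as $s\to-\infty$, so each non-distinguished term behaves like a positive constant times $e^{d(l_n-n/2)s}\to+\infty$, hence is unbounded and in particular non-constant. (The auxiliary facts you invoke are fine: $F''(s)=\sigma_f^2(e^s)>0$ on $(0,R)$ because $f\in\K$ is non-constant, and the term-by-term differentiation is justified by analyticity of $\log f$ near $0$.) So your argument is slightly longer but more complete: it proves the non-constancy for every $f\in\K$ rather than leaving it implicit, at the modest cost of an asymptotic analysis of the fulcrum at the origin, which the paper's purely combinatorial bookkeeping avoids.
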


\begin{proof}
	Fix $n = 2k$, where $k \geq 2$ is an integer. Formula (\ref{eq: moments_normalized_Khinchin_fulcrum}) can be written as:
	\begin{align*}
		\E(\breve{X}_t^n) = \sum_{\substack{2m_2+\dots+nm_n = n \\ m_2 \neq k}}\frac{n!}{m_2!\dots m_n!}\prod_{j = 1}^{n}\left(\frac{F^{(j)}(s)}{F^{\prime\prime}(s)^{j/2}}\frac{1}{j!} \right)^{m_j}+\frac{(2k)!}{k! \, (2!)^k}\,,
	\end{align*}
	and therefore 
		\begin{align}\label{eq: even_moments_proof}
		\E(\breve{X}_t^n) -\frac{(2k)!}{k! \, (2!)^k}= \sum_{\substack{2m_2+\dots+nm_n = n \\ m_2 \neq k}}\frac{n!}{m_2!\dots m_n!}\prod_{j = 1}^{n}\left(\frac{F^{(j)}(s)}{F^{\prime\prime}(s)^{j/2}}\frac{1}{j!} \right)^{m_j}\,.
	\end{align}
	
	Assume that $n =2k$, for $k \geq 2$, is even. For any list of non-negative integers $m_2,m_3,\dots,m_n$ such that $2m_2+3m_3+\dots+nm_n = n$, with $m_2 \neq k$, there always exists some $m_j >0$, with $3\leq j\leq n$. Therefore each of the terms in the sum \eqref{eq: even_moments_proof} includes a non-trivial (i.e. $j \neq 2$) quotient of derivatives of the fulcrum.

	\medskip
	Assume now that $n$ is odd. For any list of non-negative integers $m_2,m_3,\dots,m_n$ such that $2m_2+3m_3+\dots+nm_n = n$, there always exists some odd $j$ with $3\leq j\leq n$ such that $m_j>0$, otherwise $n$ would be even. Therefore, each term in the sum 
	\begin{align*}
		\E(\breve{X}_t^n) = \sum_{2m_2+\dots+nm_n = n}\frac{n!}{m_2!\dots m_n!}\prod_{j = 1}^{n}\left(\frac{F^{(j)}(s)}{F^{\prime\prime}(s)^{j/2}}\frac{1}{j!} \right)^{m_j}\,,
	\end{align*}
	includes a non-trivial (i.e. $j\neq2$) quotient of derivatives of the fulcrum.
\end{proof}

\begin{remark}
For comparison and later use we collect here the integer moments of a standard normal random variable: 
	\begin{align*}
		\E(Z^n) = 0, \quad \text{ for odd } n \geq 1,
	\end{align*}
	and 
	\begin{align*}
		\E(Z^{2k}) = \frac{(2k)!}{k! \, (2!)^{k}}, \quad \text{ for any } k \geq 0.
	\end{align*}
See, for instance, \cite[p. 114]{Durrett}
%
%Equations \eqref{eq: log_normal} and \eqref{eq: moments_normal_faa_di_bruno} are very similar to equations \eqref{eq: moments_normalized_Khinchin_fulcrum} and \eqref{eq: quotient_derivatives_fulcrum_terms_moments_normalized}.
%\medskip
\end{remark}

\subsection{Riordan formula.}
There is an equivalent form of Faà di Bruno's formula known as Riordan's formula; see, for instance, \cite[p. 219]{Warren}. This equivalent formula makes use of Bell polynomials.
	The $n$-th Bell polynomial is given by 
	\begin{align}\label{eq: Bellpolynomials}
		B_n(x_1,\dots,x_n) = n! \sum_{j_1+2j_2+\dots+nj_n = n}\prod_{l=1}^{n}\frac{x_l^{j_l}}{(l!)^{j_l}j_l!}\,,
	\end{align}
	see, for instance, \cite[p. 173]{Riordan} or \cite[pp. 218-219]{Warren}. 
	\medskip
	
By applying Riordan’s formula, see \cite[p. 219]{Warren}, we derive an equivalent, and significantly more compact, expression for \eqref{eq: moments_normalized_Khinchin_fulcrum}:
	\begin{align}\label{eq: Riordan_formula_aplicadad_tipificadaKhinchin}
		\small
		\begin{split}
			\E(\breve{X}_t^n)i^n &=  \sum_{k=0}^{n}B_{n,k}\left(0,i^2,i^3\frac{F^{\prime\prime\prime}(s)}{F^{\prime\prime}(s)^{3/2}},\dots,i^{n-k+1}\frac{F^{(n-k+1)}(s)}{F^{\prime\prime}(s)^{(n-k+1)/2}}\right) \\[.2 cm]
			&= B_n\left(0,i^2,i^3\frac{F^{\prime\prime\prime}(s)}{F^{\prime\prime}(s)^{3/2}},\dots,i^{n}\frac{F^{(n)}(s)}{F^{\prime\prime}(s)^{n/2}}\right) = i^n B_n\left(0,1,\frac{F^{\prime\prime\prime}(s)}{F^{\prime\prime}(s)^{3/2}},\dots,\frac{F^{(n)}(s)}{F^{\prime\prime}(s)^{n/2}}\right).
		\end{split}
	\end{align}
	Here $B_{n,k}(x_1,\dots,x_{n-k+1})$ denotes the $n$-th incomplete exponential Bell polynomial, and  $B_n(x_1,\dots,x_n)$ denotes the $n$-th Bell polynomial, with $n \geq 1$ being an integer.
	\medskip
	
	In the expression above, i.e equation \eqref{eq: Riordan_formula_aplicadad_tipificadaKhinchin}, we make use of the following property of the Bell polynomials: for any complex number $a \in \C$ we have that
	\begin{align*}
		(\dagger) \quad 	B_n(ax_1,a^2x_2,\dots,a^n x_n) = a^nB_n(x_1,\dots,x_n).
	\end{align*}
This equality follows directly from the definition of the Bell polynomials in \eqref{eq: Bellpolynomials}.
	
	\medskip

	Denote by $Z$ a standard normal random variable. Applying Riordan's formula, see \cite[p. 219]{Warren}, to the characteristic function $\E(e^{i\theta Z}) = e^{-\theta^2/2}$, at $\theta = 0$, we find that 
	\begin{align}\label{eq: moments_standard_normal_Bell}
		\E(Z^n)i^n = \sum_{k = 0}^{n}B_{n,k}(0,-1,0, \dots,0) = B_n(0,-1,0,\dots,0) = i^nB_n(0,1,0,\dots,0)\,,
	\end{align}
Here, \( B_{n,k}(x_1,\dots,x_{n-k+1}) \) denotes the $n$-th incomplete exponential Bell polynomial, and \( B_n(x_1,\dots,x_n) \) represents the $n$-th Bell polynomial, both for integers $n \geq 1$. The equality on the right-hand side follows from the property $(\dagger)$ of the Bell polynomials.
\smallskip

In summary, formula \eqref{eq: moments_standard_normal_Bell} gives that the $n$-th moment of the standard normal distribution can be expressed in terms of the $n$-th Bell polynomial. Specifically, we have
\begin{align}\label{eq: moments_normal_Bernoullipoly}
\E(Z^n) = B_n(0, 1, 0, \dots, 0), \quad \text{for any } n \geq 1\,,
\end{align}
Furthermore, using equation \eqref{eq: Riordan_formula_aplicadad_tipificadaKhinchin}, we can, similarly, express the moments of $(\breve{X}_t)$ as:

\begin{align}\label{eq: formula_momentos_tipificada_Khinchin}
\E(\breve{X}_t^n) = B_n\left(0, 1, \frac{F^{\prime\prime\prime}(s)}{F^{\prime\prime}(s)^{3/2}}, \dots, \frac{F^{(n)}(s)}{F^{\prime\prime}(s)^{n/2}}\right), \quad \text{for any } n \geq 1\,.
\end{align}

We can also express the $n$-th central moment of the Khinchin family $(X_t)$ in terms of the $n$-th Bell polynomial. Multiplying both sides of \eqref{eq: formula_momentos_tipificada_Khinchin} by $F^{\prime\prime}(s)^{n/2}$ we obtain that 
\begin{align*}
\E((X_t-m_f(t))^{n}) = F^{\prime\prime}(s)^{n/2}B_n\left(0, 1, \frac{F^{\prime\prime\prime}(s)}{F^{\prime\prime}(s)^{3/2}}, \dots, \frac{F^{(n)}(s)}{F^{\prime\prime}(s)^{n/2}}\right).
\end{align*}
Next, we make use of property $(\dagger)$ to stablish that
\begin{align*}
\E((X_t-m_f(t))^{n}) = B_n\left(0, F^{\prime\prime}(s), {F^{\prime\prime\prime}(s)}, \dots, {F^{(n)}(s)}\right).
\end{align*}
Recall that here we parametrize $t = e^s$, for $s < \ln(R)$.

\begin{theo}\label{lemma: convergence_moments_fulcrum}Let $f \in \K$ be a power series with radius of convergence $R>0$. Denote by $Z$ a standard normal random variable. Fix an integer $n\geq3$, then 
	\begin{align*}
		\lim_{t\uparrow R}\E(\breve{X}_t^n) = \E(Z^n)\,, \quad \text{ for any } 3\leq j\leq n,
	\end{align*}
	if and only if 
	\begin{align*}
		\lim_{s\uparrow\ln(R)}\frac{F^{(j)}(s)}{F^{\prime\prime}(s)^{j/2}} = 0\,,\quad \text{ for any } 3\leq j\leq n. 
	\end{align*}
\end{theo}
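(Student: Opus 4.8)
The plan is to read the stated equivalence in its intended form — $\lim_{t\uparrow R}\E(\breve X_t^{\,j})=\E(Z^j)$ for every $3\le j\le n$ if and only if $\lim_{s\uparrow\ln R}F^{(j)}(s)/F''(s)^{j/2}=0$ for every $3\le j\le n$ — and to derive both implications by passing to the limit inside the polynomial identities of Theorem~\ref{lemma: formula_faa_di_bruno_Fulcrum_momentos_tipificados}. The convenient packaging is the Bell‑polynomial form: for $t=e^s$, equation \eqref{eq: formula_momentos_tipificada_Khinchin} gives
\[
\E(\breve X_t^{\,j})=B_j\!\Big(0,1,\tfrac{F'''(s)}{F''(s)^{3/2}},\dots,\tfrac{F^{(j)}(s)}{F''(s)^{j/2}}\Big),
\]
while \eqref{eq: moments_normal_Bernoullipoly} records $\E(Z^j)=B_j(0,1,0,\dots,0)$. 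So the whole argument is an exercise in evaluating a fixed polynomial at a convergent sequence of arguments, once one structural property of $B_j$ is recorded.

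For the direction ``fulcrum quotients vanish $\Rightarrow$ moments converge'' I would fix $3\le j\le n$, note that $(x_3,\dots,x_j)\mapsto B_j(0,1,x_3,\dots,x_j)$ is a polynomial and hence continuous, observe that by hypothesis its argument tends to $(0,\dots,0)$ as $s\uparrow\ln R$, and conclude $\E(\breve X_t^{\,j})\to B_j(0,1,0,\dots,0)=\E(Z^j)$. Nothing beyond continuity of polynomials is needed here.

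For the converse I would argue by induction on $j$, using the structural fact that the monomial $x_j$ occurs in $B_j(x_1,\dots,x_j)$ with coefficient exactly $1$ (by \eqref{eq: Bellpolynomials} it comes only from the partition of $j$ into a single block), so that $B_j(0,1,x_3,\dots,x_j)=x_j+Q_j(x_3,\dots,x_{j-1})$ for a polynomial $Q_j$ with $Q_j(0,\dots,0)=B_j(0,1,0,\dots,0)=\E(Z^j)$; consequently
\[
\frac{F^{(j)}(s)}{F''(s)^{j/2}}=\E(\breve X_t^{\,j})-Q_j\!\Big(\tfrac{F'''(s)}{F''(s)^{3/2}},\dots,\tfrac{F^{(j-1)}(s)}{F''(s)^{(j-1)/2}}\Big).
\]
The base case $j=3$ is immediate from $\E(\breve X_t^{\,3})=F'''(s)/F''(s)^{3/2}$ and $\E(Z^3)=0$. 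In the inductive step the inductive hypothesis together with continuity of $Q_j$ give $Q_j(\cdots)\to Q_j(0,\dots,0)=\E(Z^j)$, while $\E(\breve X_t^{\,j})\to\E(Z^j)$ by assumption, so the difference tends to $0$. An alternative for this direction is to use \eqref{eq: quotient_derivatives_fulcrum_terms_moments_normalized} directly: its right‑hand side is a fixed polynomial in $\E(\breve X_t^{\,1}),\dots,\E(\breve X_t^{\,j})$, all of which converge to the corresponding moments of $Z$, and the limiting value is $0$ because that identity is exactly the moments‑to‑cumulants relation and every cumulant of $Z$ of order at least $3$ vanishes.

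The main obstacle — a mild one — is the structural identity $B_j(0,1,x_3,\dots,x_j)=x_j+Q_j(x_3,\dots,x_{j-1})$ together with $Q_j(0,\dots,0)=\E(Z^j)$: one must isolate the top linear term of the Bell polynomial and identify the complementary part, evaluated at the origin, with the $j$‑th normal moment. In the alternative route the analogous point is simply that the Faà di Bruno/Riordan identity \eqref{eq: quotient_derivatives_fulcrum_terms_moments_normalized} holds verbatim for $Z$; both are elementary, so the equivalence follows without further difficulty.
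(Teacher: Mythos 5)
Your argument is correct and follows essentially the same route as the paper: both directions are read off from the Fa\`a di Bruno/Riordan identities of Theorem \ref{lemma: formula_faa_di_bruno_Fulcrum_momentos_tipificados}, i.e.\ the Bell-polynomial relations between $\E(\breve{X}_t^j)$ and $F^{(j)}(s)/F^{\prime\prime}(s)^{j/2}$. Your write-up (continuity of the polynomial map for one implication, and the triangular structure $B_j(0,1,x_3,\dots,x_j)=x_j+Q_j(x_3,\dots,x_{j-1})$ with induction on $j$, or equivalently the inverse moments-to-cumulants formula, for the other) simply supplies the details that the paper's very terse proof leaves implicit.
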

\begin{proof} First observe that, for any $j \geq 3$, the term ${F^{(j)}(s)}/{F^{\prime\prime}(s)^{j/2}}$ always appears in the $j$-th moment of $\breve{X}_t$, see equation \eqref{eq: moments_normalized_Khinchin_fulcrum} for further details.
\medskip

We fix some \(n \geq 3\) and impose the condition:
\[
\lim_{t \uparrow R} \E(\breve{X}_t^n) = \E(Z^n)\,, \text{ for any } 3\leq j\leq n\,,
\]
this in turn implies that
\[
\lim_{s \uparrow \ln(R)} \frac{F^{(j)}(s)}{F^{\prime\prime}(s)^{j/2}} = 0 \quad \text{for all} \quad 3 \leq j \leq n.
\]

Conversely, to ensure the convergence of these moments, we require the convergence of these quotients of derivatives for every \(3 \leq j \leq n\).
\medskip

\end{proof}

\section{Moment criteria for Gaussianity}

Let $Z$ be a standard normal random variable. The standard normal distribution is determined by its moments, meaning that: For any sequence of random variables $(X_n)_{n \geq 1}$ such that
\begin{align*}
\lim_{n \rightarrow +\infty}\E(X_n^m) = \E(Z^m)\,, \quad \text{ for any } m \geq 1,
\end{align*}
we have $X_n \stackrel{d}{\rightarrow} Z$, as $n \rightarrow +\infty$, see, for instance, Billingsley, \cite[Section 30]{Billingsley}. Combining \cite[Theorem 30.2]{Billingsley} and Theorem \ref{lemma: convergence_moments_fulcrum} we have
%\begin{lemma}\label{lemma: momentos_Billignsley_convergen_Gaussiano}We let $Z$ denote a random variable with standard normal distribution. 
%	$$ \text{ If } \lim_{t \uparrow R}\E(\breve{X}_t^k)=\E(Z^k)\,, \, \,  \mbox{for every $k\ge 3$},\quad \mbox{then  the family ${X}_t$ is gaussian}\,.$$
%\end{lemma}
%\medskip
%
%Using the previous lemma we obtain that
\begin{theo}\label{thm: gaussianity all derivatives}
	Let $f \in \K$ be a power series with radius of convergence $R>0$. Assume that 
	$$\lim_{s\uparrow \ln R} \frac{F^{(k)}(s)}{F^{\prime\prime}(s)^{k/2}}=0\,, \quad \mbox{for every $k \ge 3$}\,,$$
	then $f$ is gaussian.
\end{theo}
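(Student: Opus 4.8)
The plan is to combine the method-of-moments characterization of the standard normal with the moment expansions already established. Concretely, I would argue as follows. Fix any integer $m \geq 1$. If $m = 1$, then $\E(\breve X_t) = 0$ for every $t \in (0,R)$ by definition of the normalization, so trivially $\lim_{t \uparrow R}\E(\breve X_t) = 0 = \E(Z)$. If $m = 2$, then $\E(\breve X_t^2) = \V(\breve X_t) = 1$ for every $t$, hence $\lim_{t\uparrow R}\E(\breve X_t^2) = 1 = \E(Z^2)$. For $m \geq 3$, I invoke equation \eqref{eq: formula_momentos_tipificada_Khinchin}, which expresses
\[
\E(\breve X_t^m) = B_m\!\left(0,\,1,\,\frac{F^{\prime\prime\prime}(s)}{F^{\prime\prime}(s)^{3/2}},\,\dots,\,\frac{F^{(m)}(s)}{F^{\prime\prime}(s)^{m/2}}\right),
\]
together with the hypothesis $\lim_{s\uparrow\ln R} F^{(k)}(s)/F^{\prime\prime}(s)^{k/2} = 0$ for every $k \geq 3$. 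Since $B_m$ is a polynomial (hence continuous) in its arguments, the limit passes inside, giving
\[
\lim_{t\uparrow R}\E(\breve X_t^m) = B_m(0,1,0,\dots,0) = \E(Z^m),
\]
the last equality being \eqref{eq: moments_normal_Bernoullipoly}. Alternatively, and perhaps more transparently, one can cite Theorem \ref{lemma: convergence_moments_fulcrum} directly for the $m \geq 3$ cases: the hypothesis is exactly its "if" condition, so it yields $\lim_{t\uparrow R}\E(\breve X_t^m) = \E(Z^m)$ for each such $m$.

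Having established that $\lim_{t\uparrow R}\E(\breve X_t^m) = \E(Z^m)$ for every integer $m \geq 1$, I would then appeal to the fact that the standard normal distribution is determined by its moments — this is the classical moment problem result, available as \cite[Theorem 30.1 and Theorem 30.2]{Billingsley} — to conclude that $\breve X_t \stackrel{d}{\rightarrow} \text{N}(0,1)$ as $t \uparrow R$. One technical point worth a sentence: the method-of-moments theorem in Billingsley is usually stated for sequences indexed by $n \in \mathbb{N}$, whereas here the index $t$ ranges over the continuum $[0,R)$; this is harmless, since convergence as $t \uparrow R$ is equivalent to convergence along every sequence $t_k \uparrow R$, and the moment hypothesis is inherited by each such sequence, so one applies the discrete statement along an arbitrary sequence and invokes the subsequence criterion for convergence in distribution. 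By the definition of Gaussianity for a Khinchin family given in Section 2, this is precisely the assertion that $f$ is Gaussian.

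The main obstacle — really the only non-routine point — is justifying the interchange of limit and the polynomial $B_m$, i.e. making sure every argument of the Bell polynomial other than the first two genuinely tends to a limit. This is immediate from the hypothesis, which supplies $F^{(k)}(s)/F^{\prime\prime}(s)^{k/2} \to 0$ for all $k \geq 3$, and the first two slots are the constants $0$ and $1$; so continuity of the polynomial $B_m$ in finitely many variables does the rest. The only other thing to keep in mind is that $\E(\breve X_t^m)$ is genuinely finite for each $t$ and each $m$: this holds because $X_t$ has moments of all orders — its mass function decays faster than any polynomial since $f$ has positive radius of convergence (equivalently, $\E(e^{i\theta \breve X_t})$ is $C^\infty$ in $\theta$, as recorded in Lemma \ref{lemma: equality_fulcrum_characteristic}). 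With these observations in place the proof is complete.
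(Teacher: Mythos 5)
Your proof is correct and follows essentially the same route as the paper, which likewise obtains the theorem by combining the moment-convergence criterion of Theorem \ref{lemma: convergence_moments_fulcrum} (equivalently, the Bell-polynomial formula \eqref{eq: formula_momentos_tipificada_Khinchin}) with the method of moments from \cite[Theorem 30.2]{Billingsley}. The extra details you supply (the cases $m=1,2$, finiteness of all moments of $\breve X_t$, and reducing the continuous limit $t\uparrow R$ to arbitrary sequences $t_k\uparrow R$) are precisely the points the paper leaves implicit.
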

	\smallskip

	%	For $k \ge 3$ we have, appealing to \ref{eq: moments_normal_Bernoullipoly} and \eqref{eq: formula_momentos_tipificada_Khinchin}, that
	%		\begin{align}
		%		\E(\breve{X}_t^n) = B_n\left(0,1,\frac{F^{\prime\prime\prime}(s)}{F^{\prime\prime}(s)^{3/2}},\dots,\frac{F^{(n)}(s)}{F^{\prime\prime}(s)^{n/2}}\right), \quad \text{ for any } n \geq 1\,,
		%	\end{align}
	%	and also that
	%	\begin{align*}
		%			\E(Z^n) = B_n(0,1,0,\dots,0), \quad \text{ for any } n \geq 1\,,
		%	\end{align*}
	%	where $Z$ denotes a standard normal, then our hypothesis gives that 
	%	 \begin{align*}
		%	 \lim_{t \uparrow R}\E(\breve{X}_t^{n}) = B_n(0,1,0,\dots,0), \quad \text{ for any } n \geq 1\,,
		%	 \end{align*}
	%	 and therefore the moments of $\breve{X}_t$ converge to the moments of $Z$. Given that the normal distribution is determined by their moments, we conclude that $f$ is Gaussian. 

The significance of the previous result lies in the fact that all the information about the moments of the family $(\breve{X}_t)$ depends solely on the values of $t$ on the positive real axis. Consequently, the method of moments allows us to prove that $f$ is Gaussian using only the restriction of $f$ to the positive real axis. For further comparisons, see Theorem 3.2 in \cite[p. 864]{CFFM1}, where it is assumed that $f$ has no zeros in its disk of convergence and also that a uniform condition on the third derivative of the fulcrum holds around the real axis.
\medskip

We now provide conditions, in terms of the derivatives of the fulcrum, for the normalized Khinchin family $(\breve{X}_t)$ to have bounded integer moments. 
\begin{lem}\label{lemma: finitemomentsiff}Let $f \in \K$ be a power series with radius of convergence $R>0$. For any integer $n \geq 3$ we have that
	\begin{align*}
		\limsup_{s \uparrow \ln(R)}\frac{|F^{(j)}(s)|}{F^{\prime\prime}(s)^{j/2}}<+\infty\,, \quad \text{ for any } 3 \leq j\leq n\,,
	\end{align*}
	if and only if 
	\begin{align*}
		\limsup_{t\uparrow R}|\E(\breve{X}_t^k)|<+\infty\,, \quad \text{ for any } 3 \leq k \leq n\,.
	\end{align*}
\end{lem}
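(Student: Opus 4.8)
The plan is to prove Lemma~\ref{lemma: finitemomentsiff} by induction on $n$, using the two explicit inversion formulas from Theorem~\ref{lemma: formula_faa_di_bruno_Fulcrum_momentos_tipificados}, namely \eqref{eq: moments_normalized_Khinchin_fulcrum} expressing $\E(\breve X_t^n)$ as a polynomial (with fixed combinatorial coefficients) in the quotients $F^{(j)}(s)/F''(s)^{j/2}$ for $2\le j\le n$, and \eqref{eq: quotient_derivatives_fulcrum_terms_moments_normalized} expressing $F^{(n)}(s)/F''(s)^{n/2}$ as a polynomial in the moments $\E(\breve X_t^j)$ for $2\le j\le n$. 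The base case $n=3$ is immediate: as noted right after Theorem~\ref{lemma: formula_faa_di_bruno_Fulcrum_momentos_tipificados}, $\E(\breve X_t^3) = F'''(s)/F''(s)^{3/2}$, so the two $\limsup$ conditions are literally the same quantity and each is finite iff the other is.

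For the inductive step, suppose the equivalence holds for $n-1$. Assume first that $\limsup_{s\uparrow\ln R}|F^{(j)}(s)|/F''(s)^{j/2}<+\infty$ for all $3\le j\le n$. By the inductive hypothesis this already gives $\limsup_{t\uparrow R}|\E(\breve X_t^k)|<+\infty$ for $3\le k\le n-1$; for $k=n$, I plug the bounded quotients into \eqref{eq: moments_normalized_Khinchin_fulcrum} --- every term of that finite sum is a fixed constant times a product of the quotients $F^{(j)}(s)/F''(s)^{j/2}$ with $j\le n$ (the $j=1$ factor does not appear since $F'(s)$ is killed by the normalization, and the $j=2$ factor is identically $1$), each of which is eventually bounded, so the whole sum has finite $\limsup$. (One should note $\E(\breve X_t^2)=1$ exactly, so the $k=2$ case is trivial and need not be carried.) Conversely, assume $\limsup_{t\uparrow R}|\E(\breve X_t^k)|<+\infty$ for $3\le k\le n$. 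By induction, $\limsup_{s}|F^{(j)}(s)|/F''(s)^{j/2}<+\infty$ for $3\le j\le n-1$; for $j=n$ I use \eqref{eq: quotient_derivatives_fulcrum_terms_moments_normalized}, whose right-hand side is a fixed-coefficient polynomial in the moments $\E(\breve X_t^j)$, $2\le j\le n$ (again $\E(\breve X_t^2)=1$), all of which have finite $\limsup$, so $F^{(n)}(s)/F''(s)^{n/2}$ does too.

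The only genuinely delicate point --- and the one I would phrase carefully --- is that the algebraic identities \eqref{eq: moments_normalized_Khinchin_fulcrum} and \eqref{eq: quotient_derivatives_fulcrum_terms_moments_normalized} hold \emph{pointwise} for each fixed $t=e^s\in(0,R)$, so taking $\limsup$ as $s\uparrow\ln R$ is legitimate term by term: a finite sum of functions each with finite $\limsup$ has finite $\limsup$ (by subadditivity of $\limsup$ and the fact that $\limsup|\alpha\beta|\le(\limsup|\alpha|)(\limsup|\beta|)$ when both are finite). No uniformity in $\theta$ or in $n$ is needed, because $n$ is fixed throughout. I do not expect any real obstacle here; the proof is essentially bookkeeping, and the main thing to get right is the indexing in the induction (that the degree-$n$ quotient appears in the degree-$n$ moment and conversely, which is exactly the content of the first sentence of the proof of Theorem~\ref{lemma: convergence_moments_fulcrum}) and the harmless observation that the $j=1,2$ slots contribute trivially.
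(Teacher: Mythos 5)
Your proposal is correct and is essentially the paper's own argument: both directions rest on the two inversion formulas \eqref{eq: moments_normalized_Khinchin_fulcrum} and \eqref{eq: quotient_derivatives_fulcrum_terms_moments_normalized} of Theorem \ref{lemma: formula_faa_di_bruno_Fulcrum_momentos_tipificados}, bounding a fixed finite sum of products of eventually bounded quantities (the paper packages this via Bell polynomials with non-negative coefficients, you via subadditivity and submultiplicativity of $\limsup$). The induction on $n$ is harmless but unnecessary, since the formulas apply directly to each $3\le k\le n$, which is exactly how the paper proceeds.
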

\begin{proof} Using formula \eqref{eq: moments_normalized_Khinchin_fulcrum} we find that, for any $3 \leq k \leq n$ and $t = e^s \in (0,R)$, we have
	\begin{align}\label{eq: finite_quotients_derivative_j}
		\E(\breve{X}_t^k)= B_k\left(0,1,\frac{F^{\prime\prime\prime}(s)}{F^{\prime\prime}(s)^{3/2}},\dots,\frac{F^{(k)}(s)}{F^{\prime\prime}(s)^{k/2}}\right)
	\end{align}
	here $B_k(x_1,\dots,x_n)$ is $k$-th Bell polynomial. The previous equality gives that 
	\begin{align*}
		|\E(\breve{X}_t^k)| \leq B_k\left(0,1,\frac{|F^{\prime\prime\prime}(s)|}{F^{\prime\prime}(s)^{3/2}},\dots,\frac{|F^{(k)}(s)|}{F^{\prime\prime}(s)^{k/2}}\right).
	\end{align*}
	Here we use that the Bell polynomials have non-negative coefficients. Using \eqref{eq: finite_quotients_derivative_j} we conclude that 
	\begin{align*}
		\limsup_{t\uparrow R}|\E(\breve{X}_t^k)|<+\infty\,, \quad \text{ for any } 3 \leq k \leq n\,.
	\end{align*}
	
	Now for the other implication, using formula \eqref{eq: quotient_derivatives_fulcrum_terms_moments_normalized}, we obtain that for any $3\leq k\leq n$ there exists a constant $C_k>0$ such that 
	\begin{align*}
		\frac{|F^{(k)}(s)|}{F^{\prime\prime}(s)^{k/2}} \leq C_k \cdot B_{k}(0,1,|\E(\breve{X}_t^3)|,\dots,|\E(\breve{X}_t^{k})|)\,.
	\end{align*}
	Here $B_k(x_1,\dots,x_k)$ denotes the $k$-th Bell polynomial. 
\end{proof}

\subsection{A partial converse of Theorem \ref{thm: gaussianity all derivatives}.}
Combining the concept of uniform integrability, see, for instance, \cite{Billingsley2}[p. 30], with Lemma \ref{lemma: finitemomentsiff} we obtain a \emph{partial converse} of Theorem \ref{thm: gaussianity all derivatives}.
\begin{theo}\label{thm: bounded_gaussian_thenzero} Let $f \in \K$ be a power series with radius of convergence $R > 0$. Assume that $f$ is Gaussian and also that for some even integer $n \geq 4$ we have 
	\begin{align*}
		\limsup_{s \uparrow \ln(R)}\frac{|F^{(j)}(s)|}{F^{\prime\prime}(s)^{j/2}}<+\infty\,, \quad \text{ for any } 3 \leq j\leq n\,,
	\end{align*}
	then 
	\begin{align*}
		\lim_{s \uparrow \ln(R)}\frac{F^{(j)}(s)}{F^{\prime\prime}(s)^{j/2}} = 0\,, \quad \text{ for any } 3 \leq j <n\,. 
	\end{align*}
\end{theo}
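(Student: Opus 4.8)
The plan is to exploit the hypothesis that $f$ is Gaussian together with the uniform boundedness of the normalized moments up to order $n$, and to upgrade convergence in distribution to convergence of moments via uniform integrability. By Lemma~\ref{lemma: finitemomentsiff}, the assumption $\limsup_{s\uparrow\ln R}|F^{(j)}(s)|/F^{\prime\prime}(s)^{j/2}<+\infty$ for $3\le j\le n$ is equivalent to $\limsup_{t\uparrow R}|\E(\breve X_t^k)|<+\infty$ for $3\le k\le n$. In particular, for the fixed even integer $n$, the family $(\breve X_t^{\,n-1})_{t}$ is bounded in $L^{p}$ for $p=n/(n-1)>1$, because $\E(|\breve X_t^{\,n-1}|^{p})=\E(|\breve X_t|^{n})=\E(\breve X_t^{n})$ (here $n$ even) is uniformly bounded as $t\uparrow R$. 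A family bounded in $L^{p}$ for some $p>1$ is uniformly integrable; hence $(\breve X_t^{\,k})_{t}$ is uniformly integrable for every $k\le n-1$, since $|\breve X_t|^{k}\le 1+|\breve X_t|^{n-1}$ pointwise.

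Next I would combine uniform integrability with the Gaussian hypothesis. Since $f$ is Gaussian, $\breve X_t\xrightarrow{d} Z$ as $t\uparrow R$, where $Z\sim\mathrm N(0,1)$; therefore $\breve X_t^{\,k}\xrightarrow{d} Z^{k}$ for each fixed $k$. Convergence in distribution plus uniform integrability of $(\breve X_t^{\,k})_t$ yields convergence of the means, i.e. $\lim_{t\uparrow R}\E(\breve X_t^{\,k})=\E(Z^{k})$ for every $k$ with $1\le k\le n-1$. (For the borderline issue of non-integer $t\uparrow R$ one works along arbitrary sequences $t_m\uparrow R$; the limit being the same for all sequences gives the stated limit.) This is the step that does the real work: it converts the qualitative distributional statement into quantitative moment convergence, but only up to order $n-1$, which is exactly why the conclusion is stated for $3\le j<n$ rather than $j\le n$.

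Finally I would feed these moment limits back into the fulcrum via the inversion formula \eqref{eq: quotient_derivatives_fulcrum_terms_moments_normalized}, which expresses $F^{(k)}(s)/F^{\prime\prime}(s)^{k/2}$ as a fixed polynomial (with no constant term, since the $x_1$-slot is $0$) in $\E(\breve X_t^{\,2}),\dots,\E(\breve X_t^{\,k})$; equivalently one uses the Bell-polynomial form. The argument is an induction on $k$ from $3$ to $n-1$: for $k=3$, \eqref{eq: quotient_derivatives_fulcrum_terms_moments_normalized} gives $F^{\prime\prime\prime}(s)/F^{\prime\prime}(s)^{3/2}=\E(\breve X_t^{3})\to\E(Z^{3})=0$. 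Assuming $F^{(j)}(s)/F^{\prime\prime}(s)^{j/2}\to 0$ for all $3\le j<k$, the only term in the polynomial expression for $F^{(k)}(s)/F^{\prime\prime}(s)^{k/2}$ that is not forced to $0$ in the limit is the one containing $\E(\breve X_t^{\,k})$ itself (all other monomials involve some lower-order nontrivial quotient, which tends to $0$), and that term tends to $\E(Z^{k})$, which equals $0$ for odd $k$ and, for even $k<n$, is exactly cancelled by the corresponding normal moment appearing through the combinatorial identity relating \eqref{eq: moments_normalized_Khinchin_fulcrum} and \eqref{eq: quotient_derivatives_fulcrum_terms_moments_normalized}. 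Hence $F^{(k)}(s)/F^{\prime\prime}(s)^{k/2}\to 0$, completing the induction.

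The main obstacle is the bookkeeping in the last paragraph: one must verify carefully that in \eqref{eq: quotient_derivatives_fulcrum_terms_moments_normalized} every monomial other than the single linear term $\E(\breve X_t^{\,k})$ genuinely contains a factor $\E(\breve X_t^{\,j})/j!$ with $3\le j<k$ for which the inductive hypothesis applies (this uses that partitions of $k$ with $m_k=0$ must use some part $j$ with $3\le j<k$ once the part $2$ is accounted for), and that the surviving constant matches $\E(Z^{k})$ so that the limit is $0$ and not some nonzero constant. The boundedness hypothesis is what guarantees all these quotients stay finite along the way, so that "limit of a polynomial = polynomial of limits" is legitimate; without it the individual monomials could fail to converge even though the moments do.
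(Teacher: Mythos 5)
Your proposal is correct and follows essentially the same route as the paper: Lemma \ref{lemma: finitemomentsiff} to turn the hypothesis into bounded normalized moments, uniform integrability (supplied by the even $n$-th moment bound) together with the Gaussian hypothesis to get $\lim_{t\uparrow R}\E(\breve{X}_t^m)=\E(Z^m)$ for $3\le m<n$, and then the Fa\`a di Bruno/Bell-polynomial inversion \eqref{eq: quotient_derivatives_fulcrum_terms_moments_normalized} to translate this into the vanishing of the quotients, the only difference being that the paper simply cites Theorem \ref{lemma: convergence_moments_fulcrum} for this last step while you spell it out. One small correction to your bookkeeping: the monomials in \eqref{eq: quotient_derivatives_fulcrum_terms_moments_normalized} are products of moments, not of fulcrum quotients, so for even $k$ the all-twos partition term (e.g.\ $-3$ when $k=4$) does not tend to zero and your inductive hypothesis plays no role there; the limit vanishes because the fixed polynomial evaluated at the normal moments is the $k$-th cumulant of $\mathrm{N}(0,1)$, i.e.\ exactly the cancellation you mention at the end.
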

\begin{proof} Our hypothesis combined with Lemma \ref{lemma: finitemomentsiff} gives that 
	\begin{align*}
		\limsup_{t\uparrow R}|\E(\breve{X}_t^k)|<+\infty\,, \quad \text{ for any } 3 \leq k \leq n\,.
	\end{align*}
	
	Because $n$ is even we have that the family $(\breve{X}_t^m)$ is uniformly integrable for any integer $3 \leq m < n$ and therefore using that $f$ is Gaussian we find that 
	\begin{align}\label{eq: moment_convergence_converse}
		\lim_{t \uparrow R}\E(\breve{X}_t^m) = \E(Z^m), \quad \text{ for any integer } 3\leq m <n,
	\end{align}
	here $Z$ denotes a standard normal random variable. Now applying Theorem \ref{lemma: formula_faa_di_bruno_Fulcrum_momentos_tipificados}, see also Theorem \ref{lemma: convergence_moments_fulcrum}, we obtain that \eqref{eq: moment_convergence_converse} is equivalent to 
	\begin{align*}
		\lim_{s \uparrow \ln(R)}\frac{F^{(j)}(s)}{F^{\prime\prime}(s)^{j/2}} = 0\,, \quad \text{ for any } 3 \leq j <n\,. 
	\end{align*}
\end{proof}

\subsection{A partial characterization.}
Combining Theorem \ref{thm: gaussianity all derivatives} and Theorem \ref{thm: bounded_gaussian_thenzero} we obtain a \emph{partial characterization} of gaussianity in terms of quotients of derivatives of the fulcrum. 
\begin{theo}\label{thm: partial_charac_gaussianity} Let $f \in \K$ be a power series with radius of convergence $R>0$. Assume that 
	\begin{align*}
		\limsup_{s\uparrow\ln(R)}\frac{|F^{(k)}(s)|}{F^{\prime\prime}(s)^{k/2}}<+\infty\,, \quad \text{ for any } k\geq 3\,,  
	\end{align*}
	then 
	\begin{align*}
		f \text{ is Gaussian } \quad \text{ if and only if } \quad \lim_{s\uparrow\ln(R)}\frac{F^{(k)}(s)}{F^{\prime\prime}(s)^{k/2}} = 0\,, \quad \text{ for any } k \geq 3\,.
	\end{align*}
\end{theo}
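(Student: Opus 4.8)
The plan is to derive Theorem \ref{thm: partial_charac_gaussianity} as a direct consequence of the two preceding theorems, so the proof is essentially a bookkeeping argument that combines them. Assume the global boundedness hypothesis
$$\limsup_{s\uparrow\ln(R)}\frac{|F^{(k)}(s)|}{F^{\prime\prime}(s)^{k/2}}<+\infty\,, \quad \text{ for every } k\geq 3.$$

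For the \emph{if} direction, suppose that $F^{(k)}(s)/F^{\prime\prime}(s)^{k/2}\to 0$ as $s\uparrow\ln(R)$ for every $k\geq 3$. This is exactly the hypothesis of Theorem \ref{thm: gaussianity all derivatives}, so $f$ is Gaussian; the boundedness assumption is not even needed here. For the \emph{only if} direction, suppose $f$ is Gaussian. Fix any integer $j\geq 3$. Choose an even integer $n\geq 4$ with $n>j$ (for instance $n=2j$). The global boundedness hypothesis guarantees in particular that $\limsup_{s\uparrow\ln(R)}|F^{(i)}(s)|/F^{\prime\prime}(s)^{i/2}<+\infty$ for all $3\leq i\leq n$, which is precisely the finiteness hypothesis of Theorem \ref{thm: bounded_gaussian_thenzero}. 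Applying that theorem yields $\lim_{s\uparrow\ln(R)}F^{(i)}(s)/F^{\prime\prime}(s)^{i/2}=0$ for all $3\leq i<n$, and in particular for $i=j$. Since $j\geq 3$ was arbitrary, the limit is zero for every $k\geq 3$, as claimed.

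I do not anticipate any genuine obstacle here: the two component theorems have already been proved, and the only mild subtlety is making sure the quantifiers line up, namely that for each fixed $j$ one is free to pick an even $n$ strictly larger than $j$ so that Theorem \ref{thm: bounded_gaussian_thenzero} delivers the vanishing of the $j$-th quotient (the theorem as stated only gives vanishing for indices strictly below $n$, so one must not try to take $n=j$). One should also note explicitly that the boundedness hypothesis of Theorem \ref{thm: partial_charac_gaussianity} is stated for \emph{all} $k\geq 3$, which comfortably supplies the finite-range boundedness needed to invoke Theorem \ref{thm: bounded_gaussian_thenzero} for any chosen $n$. With these remarks the proof is complete.
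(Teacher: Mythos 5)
Your proposal is correct and follows exactly the paper's route: the result is stated there as an immediate combination of Theorem \ref{thm: gaussianity all derivatives} (for the ``if'' direction) and Theorem \ref{thm: bounded_gaussian_thenzero} (for the ``only if'' direction), and your quantifier bookkeeping of choosing an even $n>j$ is precisely the intended, and correct, way to make that combination work.
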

\smallskip

\section{Some applications}

In this section, we illustrate the use of these gaussianity criterion through various applications. By applying the previously presented results to the following specific cases, we show how our criterion can be effectively employed in practical scenarios.

\subsection{Partitions of integers} In this subsection we prove that several ordinary generating function of partitions of integers with restrictions on its parts are Gaussian. \\

As a first step, we state and prove the following lemma.

\begin{lem}\label{lemma: Euler_summation} Let $g \in C^{1}([0,+\infty))$ be a function such that $g(0) = 0$ and $\displaystyle\lim_{t \rightarrow+\infty}g(t)~=~0$, then
	\begin{align}
		\left| s\sum_{j = 1}^{\infty}g(js)-\int_{0}^{\infty}g(x)dx \right| &\leq s\int_{0}^{\infty}|g^{\prime}(x)|dx\,, \label{eq: euler_ineq_withs}
	\end{align}
and 
	\begin{align}\label{eq: Euler_summation_ineq_2}
	\left|s\sum_{j = 1}^{\infty}g(js)-\int_{0}^{\infty}g(x)dx\right| \leq \int_{0}^{\infty}x|g^{\prime}(x)|dx\,.
\end{align}
	
\end{lem}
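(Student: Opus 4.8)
The plan is to treat each summand $g(js)$ as coming from an integral over the interval $[(j-1)s, js]$ (or $[js,(j+1)s]$) and to compare the Riemann sum $s\sum_{j\ge 1} g(js)$ with $\int_0^\infty g(x)\,dx$ by controlling the error on each block using the fundamental theorem of calculus. Concretely, I would write
\begin{align*}
s\,g(js) - \int_{(j-1)s}^{js} g(x)\,dx = \int_{(j-1)s}^{js}\bigl(g(js)-g(x)\bigr)\,dx = \int_{(j-1)s}^{js}\int_{x}^{js} g'(u)\,du\,dx,
\end{align*}
and then estimate $\bigl|g(js)-g(x)\bigr|\le \int_{(j-1)s}^{js}|g'(u)|\,du$ for $x$ in that block. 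Summing over $j\ge 1$ and using that the intervals $[(j-1)s,js]$ tile $[0,\infty)$ gives
\begin{align*}
\Bigl| s\sum_{j=1}^\infty g(js) - \int_0^\infty g(x)\,dx \Bigr| \le \sum_{j=1}^\infty s\int_{(j-1)s}^{js}|g'(u)|\,du = s\int_0^\infty |g'(u)|\,du,
\end{align*}
which is \eqref{eq: euler_ineq_withs}. Here the hypotheses $g(0)=0$ and $g(t)\to 0$ as $t\to\infty$ are what guarantee that $\int_0^\infty g$ converges and that there are no boundary terms to worry about when we reassemble the blocks; I should also note that if the right-hand side is infinite there is nothing to prove, so one may assume $\int_0^\infty|g'|<\infty$, which (together with $g(0)=0$) forces $g$ to have a limit at infinity, necessarily $0$.

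For the second inequality \eqref{eq: Euler_summation_ineq_2}, I would use a sharper, weighted bound on each block. The point is that the contribution of the block $[(j-1)s, js]$ to the total error, after the double-integral manipulation above, is at most $s\int_{(j-1)s}^{js}|g'(u)|\,du$, and since $u\ge (j-1)s$ on that block we have $s \le u/(j-1)$ for $j\ge 2$ — but a cleaner route is to re-examine the block estimate and observe that the error term $\int_{(j-1)s}^{js}\int_x^{js}g'(u)\,du\,dx$ has absolute value at most $\int_{(j-1)s}^{js}(js - (j-1)s)\,|g'(u)|\,\mathbf{1}\,du$ only crudely; instead one writes the error as $\int_{(j-1)s}^{js}(u-(j-1)s)|g'(u)|\,du$ type expression after swapping the order of integration, giving a factor $\le$ the block length $s$ times... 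Let me instead use the standard trick: pair $g(js)$ with the integral over $[js,(j+1)s]$ going the other way and телескоп, so that the weight accumulates. The cleanest argument: from \eqref{eq: euler_ineq_withs} applied on the tail, for each $N$ the error contributed by blocks beyond $Ns$ is $\le s\int_{Ns}^\infty|g'|$, and on $u\in[Ns,\infty)$ we have $s\le u/N$; summing the dyadic/telescoping contributions converts $s\int_0^\infty|g'|$ into $\int_0^\infty x|g'(x)|\,dx$. More honestly, I expect the intended proof of \eqref{eq: Euler_summation_ineq_2} is: reorganize as $\bigl|s\sum g(js)-\int_0^\infty g\bigr| = \bigl|\sum_j \int_{(j-1)s}^{js}(g(js)-g(x))dx\bigr| \le \sum_j \int_{(j-1)s}^{js}\int_x^{js}|g'(u)|\,du\,dx$, then swap the order of the inner double integral: for fixed $u\in[(j-1)s,js]$, the set of $x$ with $x\le u$ (and $x\ge (j-1)s$) has length $u-(j-1)s\le s$, and since $u\ge (j-1)s$ we have... the bound $\le \sum_j \int_{(j-1)s}^{js} (u-(j-1)s)|g'(u)|\,du$. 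Now $u - (j-1)s \le u - 0$ is too weak; but $u-(j-1)s\le s$ and also, crucially, when we sum over $j$ the weights $u-(j-1)s$ on consecutive blocks do not simply telescope. The correct sharpening: bound $u-(j-1)s$ by $\min(s, u)$ won't give $x|g'(x)|$ either.

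The main obstacle, and the step I would spend real care on, is getting the weighted inequality \eqref{eq: Euler_summation_ineq_2} with the clean constant (no extra factor): the right approach is to apply \eqref{eq: euler_ineq_withs}-type reasoning \emph{not} block-by-block but to note that $s\sum_{j\ge 1} g(js) - \int_0^\infty g(x)\,dx = \sum_{j\ge 1}\int_{(j-1)s}^{js}\!\!\bigl(g(js)-g(x)\bigr)dx$ and $g(js)-g(x) = \int_x^{js} g'(u)\,du$, hence by Fubini
\begin{align*}
s\sum_{j\ge 1} g(js) - \int_0^\infty g(x)\,dx = \sum_{j\ge 1}\int_{(j-1)s}^{js} g'(u)\,(u-(j-1)s)\,du = \int_0^\infty g'(u)\,\rho_s(u)\,du,
\end{align*}
where $\rho_s(u) = u - (j-1)s$ for $u\in[(j-1)s,js)$ is the "distance to the left endpoint of the block", which satisfies $0\le \rho_s(u)\le \min(s,u)$ since the first block is $[0,s)$. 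Therefore $\bigl|s\sum g(js) - \int_0^\infty g\bigr| \le \int_0^\infty |g'(u)|\min(s,u)\,du$, and both \eqref{eq: euler_ineq_withs} (bound $\min(s,u)\le s$) and \eqref{eq: Euler_summation_ineq_2} (bound $\min(s,u)\le u$) follow immediately from this single identity. Unifying everything through the explicit kernel $\rho_s$ and the Fubini computation is the cleanest path, and verifying $\rho_s(u)\le\min(s,u)$ (in particular $\rho_s(u)\le u$ because the leftmost block starts at $0$) is the one place where the hypothesis $g(0)=0$ is used implicitly to align the partition with the origin.
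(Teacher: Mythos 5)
Your final, unified argument is correct, and in substance it is the paper's proof: your kernel $\rho_s(u)=u-(j-1)s$ on $[(j-1)s,js)$ is exactly $s\{u/s\}$, so your identity
\begin{align*}
s\sum_{j\ge1}g(js)-\int_0^\infty g(x)\,dx=\int_0^\infty g'(u)\,\rho_s(u)\,du=s\int_0^\infty\Big\{\frac{u}{s}\Big\}\,g'(u)\,du
\end{align*}
is precisely the identity the paper obtains, and both inequalities then follow from $\rho_s(u)\le\min(s,u)$ just as in the paper. The only real difference is how the identity is reached: the paper cites Euler's summation formula of order $1$ (De Bruijn), with the periodic Bernoulli function $\tilde{B}_1(t)=\{t\}-1/2$ and the hypotheses $g(0)=0$, $\lim_{t\to\infty}g(t)=0$ used to kill the $j=0$ term and the boundary contribution $\int_0^\infty f'=0$, whereas you derive it from scratch by a block decomposition of $[0,\infty)$ into $[(j-1)s,js]$ plus Fubini; your route is self-contained and arguably cleaner, at the small cost of having to justify the interchange of sum and integral (and the finiteness of $\int_0^\infty g$ and of the series), which you handle by the legitimate reduction to the case where the right-hand sides are finite — a point the paper's proof also leaves implicit. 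Two cosmetic remarks: the exploratory middle portion of your write-up (the abandoned telescoping attempts) should simply be deleted, since the last paragraph is the whole proof; and your closing claim that $g(0)=0$ is what ``aligns the partition'' is not quite right --- in your derivation the sample points are the right endpoints $js$, so $g(0)$ never enters, while in the paper's proof that hypothesis is genuinely used to discard the $j=0$ term of Euler's formula.
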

\begin{proof} Let $f$ be a $C^1([0,+\infty))$ function such that $f(0) = 0$ and $\lim_{t \rightarrow +\infty}f(t) = 0$. Euler's summation formula of order $1$, see, for instance, \cite[p. 41]{DeBruijn}  , gives that  
	\begin{align*}
		\sum_{j = 0}^{\infty}f(j) &= \int_{0}^{\infty}f(t)dt +\int_{0}^{\infty}\tilde{B_1}(t)f^{\prime}(t)dt \\
		&= \int_{0}^{\infty}f(t)dt +\int_{0}^{\infty}\{t\}f^{\prime}(t)dt\,,
	\end{align*}
	Here $\tilde{B}_1(t) = \{t\}-1/2$. Observe that $\int_{0}^{\infty}f^{\prime}(t)dt = 0$.
	\medskip
	
	Now we apply this formula to $f(x) = g(xs)$ to obtain that 
	\begin{align*}
		\sum_{j = 1}^{\infty}g(js)-\frac{1}{s}\int_{0}^{\infty}g(x)dx = \int_{0}^{\infty}\left\{\frac{x}{s}\right\}g^{\prime}(x)dx\,,
	\end{align*}
	and therefore 
	\begin{align}\label{eq: Euler_summation}
		s\sum_{j = 1}^{\infty}g(js)-\int_{0}^{\infty}g(x)dx = s\int_{0}^{\infty}\left\{\frac{x}{s}\right\}g^{\prime}(x)dx\,.
	\end{align}
	\medskip
	
	From equality \eqref{eq: Euler_summation} we obtain the inequality:
	\begin{align*}
		\left| s\sum_{j = 1}^{\infty}g(js)-\int_{0}^{\infty}g(x)dx \right| &\leq s\int_{0}^{\infty}|g^{\prime}(x)|dx\,,.
	\end{align*}
	
	The second inequality follows directly from Euler's summation formula. This concludes the proof of the Lemma.
\end{proof}

\subsubsection{OGF of the partitions of integers into $p$-th powers.} Now we prove that the integer moments of the normalized Khinchin family of the OGF of the partitions into $p$-th powers converge to the integers moments of a standard normal random variable. In particular this implies that the OGF of the partitions into $p$-th powers is Gaussian. 
\medskip

The OGF of the partitions of integers into $p$-th powers is given by the infinite product
\begin{align*}
	G_p(z) = \prod_{j=1}^{\infty}\frac{1}{1-z^{j^p}}\,, \quad \text{ for any } |z|<1\,.	
\end{align*}
\medskip

The fulcrum of $G_p$ is given by 
\begin{align*}
	F_p(s) = -\sum_{j=1}^{\infty}\ln(1-e^{j^ps})=\sum_{j,k\geq1}\frac{e^{j^pks}}{k}\,, \quad \text{ for any } s<0\,,
\end{align*}
and therefore, for any integer $m \geq 0$, we have
\begin{align}\label{eq: derivatives_fulcrum}
	F^{(m)}_p(-s) = \sum_{j,k\geq1}k^{m-1}j^{pm}{e^{-j^pks}}\,\, \quad \text{ for any }s>0\,.
\end{align}
We want an asymptotic formula for $F_p^{(m)}(s)$, as $s \uparrow 0$. To derive this asymptotic formula, we will make use of Lemma \ref{lemma: Euler_summation}.

\begin{propo}\label{lem: asymptotic_Gp_moments} For any pair of integers $p \geq 1$ and $m \geq 0$ we have that
	\begin{align*}
		F_p^{(m)}(-s) \sim \frac{1}{p}\,\zeta(1+1/p)\, \Gamma(m+1/p)\frac{1}{s^{m+1/p}}\,,\quad \text{ as } s\downarrow0\,.
	\end{align*}
\end{propo}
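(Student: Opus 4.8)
The plan is to compute, for each fixed $k$, the inner sum over $j$ in \eqref{eq: derivatives_fulcrum} by Euler summation (Lemma \ref{lemma: Euler_summation}), and then sum the outcome over $k$; since every term $k^{m-1}j^{pm}e^{-j^pks}$ is non-negative, all the rearrangements below are legitimate. Assume first $m\ge1$ (the case $m=0$ is dealt with at the end) and set $g(x)=x^{pm}e^{-x^p}$, so that $g\in C^1([0,+\infty))$, $g(0)=0$, $g(x)\to0$ as $x\to+\infty$, and the substitution $w=x^p$ gives $\int_0^\infty g(x)\,dx=\tfrac1p\Gamma(m+1/p)$. For $k\ge1$ and $s>0$ put $\sigma_k=(ks)^{1/p}$; using $(j\sigma_k)^p=j^pks$ one checks that
\begin{align*}
\sum_{j\ge1}k^{m-1}j^{pm}e^{-j^pks}=k^{m-1}(ks)^{-m}\sum_{j\ge1}g(j\sigma_k)=k^{-1-1/p}\,s^{-m-1/p}\Big(\sigma_k\sum_{j\ge1}g(j\sigma_k)\Big).
\end{align*}
By Lemma \ref{lemma: Euler_summation} applied to $g$ with step $\sigma_k$ we may write $\sigma_k\sum_{j\ge1}g(j\sigma_k)=\tfrac1p\Gamma(m+1/p)+\delta_k$, where $|\delta_k|\le\min(\sigma_k A,B)$ with $A=\int_0^\infty|g'(x)|\,dx$ and $B=\int_0^\infty x|g'(x)|\,dx$ both finite.

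Summing over $k$ and using $\sum_{k\ge1}k^{-1-1/p}=\zeta(1+1/p)$ we obtain
\begin{align*}
F_p^{(m)}(-s)=s^{-m-1/p}\Big(\tfrac1p\,\Gamma(m+1/p)\,\zeta(1+1/p)+\sum_{k\ge1}k^{-1-1/p}\delta_k\Big),
\end{align*}
so for $m\ge1$ the proposition reduces to showing $\sum_{k\ge1}k^{-1-1/p}\delta_k=o(1)$ as $s\downarrow0$. A single bound on $\delta_k$ does not suffice, so I split the sum at $k\sim1/s$: for $k\le1/s$ use $|\delta_k|\le\sigma_kA=(ks)^{1/p}A$, whence $\sum_{k\le1/s}k^{-1-1/p}|\delta_k|\le As^{1/p}\sum_{k\le1/s}k^{-1}=O(s^{1/p}\log(1/s))$; for $k>1/s$ use $|\delta_k|\le B$, whence $\sum_{k>1/s}k^{-1-1/p}|\delta_k|\le B\sum_{k>1/s}k^{-1-1/p}=O(s^{1/p})$. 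Both contributions tend to $0$, which proves the claim for $m\ge1$.

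For $m=0$ one cannot feed $e^{-x^p}$ into Lemma \ref{lemma: Euler_summation} since it does not vanish at the origin; instead note that $F_p(-s)=\int_s^{+\infty}F_p'(-u)\,du$ (valid because $F_p(-u)\to0$ as $u\to+\infty$), substitute the equivalence $F_p'(-u)\sim\tfrac1p\zeta(1+1/p)\Gamma(1+1/p)\,u^{-1-1/p}$ just established for $m=1$, integrate, and use $\Gamma(1+1/p)=\tfrac1p\Gamma(1/p)$ to recover the stated constant.

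The main obstacle is precisely the error analysis of the $k$-sum: the individual errors $\delta_k$ do not tend to $0$ (for $ks\to+\infty$ they approach $-\tfrac1p\Gamma(m+1/p)$), so one genuinely needs the two complementary estimates of Lemma \ref{lemma: Euler_summation} together with the splitting at $k\sim1/s$ to see that their weighted sum is negligible against the main term $s^{-m-1/p}$; the remaining steps (the Gamma integral, the value of the zeta series, the $m=0$ reduction) are routine.
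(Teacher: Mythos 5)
Your proof is correct, and its core is the same as the paper's: apply the Euler--summation Lemma \ref{lemma: Euler_summation} to the inner sum over $j$ in \eqref{eq: derivatives_fulcrum} for each fixed $k$, identify the limit $\tfrac1p\Gamma(m+1/p)k^{-1-1/p}$, and then sum over $k$ against $\zeta(1+1/p)$. The differences are organizational but worth noting. First, you rescale so that $g(x)=x^{pm}e^{-x^p}$ is $k$-independent with step $\sigma_k=(ks)^{1/p}$, whereas the paper keeps $k$ inside $g$ and works with the fixed step obtained after the substitution $s\mapsto s^p$; second, you control the error sum $\sum_k k^{-1-1/p}\delta_k$ by an explicit split at $k\sim 1/s$, using the bound $|\delta_k|\le\sigma_k A$ below the cut and $|\delta_k|\le B$ above it, which yields a quantitative rate $O(s^{1/p}\log(1/s))$, while the paper reaches the same conclusion qualitatively via dominated convergence (pointwise $\delta_k\to0$ for fixed $k$, dominated by the $k$-uniform bound $\tilde C_{m,p}k^{-1-1/p}$ coming from the second Euler inequality). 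Third, and most substantively, you treat $m=0$ separately by writing $F_p(-s)=\int_s^{\infty}F_p'(-u)\,du$ and integrating the $m=1$ asymptotics; this is a genuine improvement, since for $m=0$ the function $k^{-1}e^{-x^pk}$ the paper feeds into Lemma \ref{lemma: Euler_summation} does not satisfy the hypothesis $g(0)=0$, so the paper's argument as written only covers $m\ge1$ and your integration step is exactly what is needed to recover the $m=0$ case (using $\Gamma(1+1/p)=\tfrac1p\Gamma(1/p)$).
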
 
\begin{proof} For any $s>0$ we can write 
	\begin{align}\label{eq: fulcrum_as_series}
		s^{pm+1}F_p^{(m)}(-s^p) = \sum_{k = 1}^{\infty}k^{m-1}\left[s\sum_{j = 1}^{\infty}(js)^{pm}e^{-(js)^{p}k}\right]\,,
	\end{align} 
	see equation \eqref{eq: derivatives_fulcrum} above. 
	\medskip
	
	We proceed in two steps. First we will use inequality \eqref{eq: euler_ineq_withs} in Lemma \ref{lemma: Euler_summation} to obtain that
	\begin{align*}
		\lim_{s \downarrow 0}s\sum_{j = 1}^{\infty}(js)^{pm}e^{-(js)^pk} = \frac{1}{p}\frac{1}{k^{1+1/p}}\Gamma(m+1/p)\,, \quad \text{ for any } k \geq 1\,. 
	\end{align*}
	Next we will give a summable upper bound. Combining this upper bound with the pointwise convergence we can apply dominated convergence and therefore obtain the asymptotic result. 
	\medskip 
	
	Fix $k,m$ and $p$. Denote $g(x) = k^{m-1}x^{pm}e^{-x^pk}$. Applying inequality \eqref{eq: euler_ineq_withs} in Lemma \ref{lemma: Euler_summation} we obtain that 
	\begin{align*}
		\left|sk^{m-1}\sum_{j = 1}^{\infty}(js)^{pm}e^{-(js)^pk}-\int_{0}^{\infty}g(x)dx\right| \leq s \int_{0}^{\infty}|g^{\prime}(x)|dx.
	\end{align*}
	\medskip
	
	Observe that 
	\begin{align*}
		\int_{0}^{\infty}g(x)dx = \frac{1}{p}\frac{1}{k^{1+1/p}}\Gamma(m+1/p)\,,
	\end{align*}
	\medskip
	then we conclude that
	\begin{align*}
		\left|sk^{m-1}\sum_{j = 1}^{\infty}(js)^{pm}e^{-(js)^pk}-\frac{1}{p}\frac{1}{k^{1+1/p}}\Gamma(m+1/p)\right| \leq s \int_{0}^{\infty}|g^{\prime}(x)|dx.
	\end{align*}
	The integral at the right-hand side of the previous inequality is finite and does not depend on $s$, therefore, for each integer $k \geq 1$, we have
	\begin{align*}
		\lim_{s \downarrow 0}s\sum_{j = 0}^{\infty}(js)^{pm}e^{-(js)^pk} = \frac{1}{p}\frac{1}{k^{1+1/p}}\Gamma(m+1/p)\,.
	\end{align*}
	\medskip
	
	We also have using inequality \eqref{eq: Euler_summation_ineq_2}  that 
	\begin{align}
		\left| s\sum_{j = 1}^{\infty}g(js)-\int_{0}^{\infty}g(x)dx \right| &\leq \int_{0}^{\infty}x|g^{\prime}(x)|dx\,. 
	\end{align}
	for any $s>0$. 
	\medskip
	
	Observe that 
	\begin{align*}
		g^{\prime}(x) = pk^{m-1}x^{pm-1}e^{-kx^p}(m-kx^{p})
	\end{align*}
	therefore, making the change of variables $y = x^pk$, we find that
	\begin{align*}
		\int_{0}^{\infty}x|g^{\prime}(x)|dx = \frac{1}{k^{1+1/p}}\int_{0}^{\infty}y^{m+1/p-1}e^{-y}|m-y|dy\,.
	\end{align*}
	\medskip

	Using this upper bound we conclude that there exists a constant $\tilde{C}_{m,p}>0$, not depending on $k$, such that 
	\begin{align}
		\left| s\sum_{j = 1}^{\infty}g(js)-\int_{0}^{\infty}g(x)dx \right| &\leq \tilde{C}_{m,p} \frac{1}{k^{1+1/p}}\,,
	\end{align}
	for any $s>0$.
	\medskip
	
	%	
	%	Now we use inequality \eqref{eq: euler_ineq_constant} to obtain a summable upper bound. Using \eqref{eq: euler_ineq_constant} we obtain the inequality 
	%	\begin{align*}
		%		\left|sk^{m-1}\sum_{j = 0}^{\infty}(js)^{pm}e^{-(js)^pk}-\frac{1}{p}\frac{1}{k^{1+1/p}}\Gamma(m+1/p)\right| \leq  \int_{0}^{\infty}x|g^{\prime}(x)|dx\,,
		%	\end{align*}
	%	\medskip
	%	
	%	We compute the first derivative $g^{\prime}(x) = k^{m-1}pmx^{pm-1}e^{-x^{p}k}-pk^{m}x^{pm+p-1}e^{-x^{p}k} = k^{m-1}px^{pm-1}e^{-x^{p}k}(m-kx^{p})$, then 
	%	\smallskip
	%	\begin{align*}
		%		\int_{0}^{\infty}x|g^{\prime}(x)|dx = \int_{0}^{\infty} k^{m-1}px^{pm}e^{-x^{p}k}|m-kx^{p}|dx\,.
		%	\end{align*}
	%	\medskip
	%	
	%	
	%	Using the change of variables $y = x^{p}k$ we obtain that 
	%	\begin{align}\label{eq: integral_xg}
		%		\begin{aligned}
			%			\int_{0}^{\infty}x|g^{\prime}(x)|dx &= \int_{0}^{\infty} k^{m-1}px^{pm}e^{-x^{p}k}|m-kx^{p}|dx \\ &= \frac{1}{k^{1+1/p}}\int_{0}^{\infty}y^{m+1/p-1}e^{-y}|m-y|dy.
			%		\end{aligned}
		%	\end{align}
	%	
	%	The integral at the right-hand side of \eqref{eq: integral_xg} is finite, therefore there exists a constant $C_{m,p}>0$, only depending on $m$ and $p$, such that 
	%	\begin{align*}
		%		\left|sk^{m-1}\sum_{j = 0}^{\infty}(js)^{pm}e^{-(js)^pk}-\frac{1}{p}\frac{1}{k^{1+1/p}}\Gamma(m+1/p)\right| \leq  C_{m,p} \, \frac{1}{k^{1+1/p}}\,.
		%	\end{align*}
	%	\medskip
	
	Now applying dominated convergence it follows that 
	\begin{align*}
		\lim_{s \downarrow 0}s^{pm+1}F_p^{(m)}(-s^p) = \lim_{s \downarrow 0}\sum_{k = 1}^{\infty}sk^{m-1}\sum_{j = 0}^{\infty}(js)^{pm}e^{-(js)^pk} = \frac{1}{p}\zeta(1+1/p)\Gamma(m+1/p)\,.
	\end{align*}
	This proves the claim. 
\end{proof}

A direct application of Proposition \ref{lem: asymptotic_Gp_moments} gives that for any $m \geq 3$ there exists a constant $C_{p,m}>0$ such that 
\begin{align}\label{eq: asymptotic_quotient_fulcrums_G_p}
	\frac{F_p^{(m)}(-s)}{F_p^{\prime\prime}(-s)^{m/2}} \sim C_{p,m} \,\, s^{(m/2-1)/p}\,, \quad \text{ as } s\downarrow0\,,
\end{align}
and therefore, using that $m \geq 3$, and applying Theorem \ref{thm: gaussianity all derivatives}, we conclude that for any $p \geq 1$ the OGF of the partitions of integers into powers of $p$ is Gaussian. See also \cite{CFFM1} for a proof of the Gaussian nature of the OGFs of various types of partitions.

\subsubsection{The MacMahon function: OGF of the plane partitions.} Here we prove that the integer moments of the normalized Khinchin family associated to the OGF of the plane partitions converge to the integer moments of a standard Gaussian. 
\medskip

The ordinary generating function of the plane partitions $\pi(n)$ (also know as the MacMahon function) is given by 
\begin{align*}
M(z) = \prod_{j = 1}^{\infty}\frac{1}{(1-z^j)^j}\,, \quad \text{ for any } |z|<1. 
\end{align*}

The fulcrum of $M(z)$ is given by 
\begin{align*}
F(s) = -\sum_{j = 1}^{\infty}j\ln(1-e^{sj})	= \sum_{j=1}^{\infty}\sum_{k=1}^{\infty}j\frac{e^{(js)k}}{k}\,, \quad \text{ for any } s<0\,,
\end{align*}
and therefore, for any $m \geq 0$, we have
\begin{align*}
F^{(m)}(-s) = \sum_{j=1}^{\infty}\sum_{k=1}^{\infty}j^{m+1}k^{m-1}{e^{-(js)k}}\,, \quad \text{ for any } s>0\,.
\end{align*}

We want an asymptotic formula for $F^{(m)}(s)$, as $s \uparrow 0$. To derive this asymptotic formula, again, we will make use of Lemma \ref{lemma: Euler_summation}.

\begin{propo}\label{propo: asymptotic_MacMahon_moments} For any integer $m \geq 0$ we have that
	\begin{align*}
		F^{(m)}(-s) \sim \zeta(3)\Gamma(m+2)\frac{1}{s^{m+2}}\,,\quad \text{ as } s\downarrow0\,.
	\end{align*}
\end{propo}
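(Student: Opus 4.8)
The plan is to follow the exact same strategy used in the proof of Proposition \ref{lem: asymptotic_Gp_moments}, adapting it to the MacMahon double sum. Starting from the formula
\[
F^{(m)}(-s) = \sum_{j,k\geq 1} j^{m+1}k^{m-1}e^{-(js)k}\,, \quad s>0\,,
\]
I would first rescale by $s$: multiplying by $s^{m+2}$ and grouping the $j$-sum as a Riemann-type sum, one gets
\[
s^{m+2}F^{(m)}(-s) = \sum_{k=1}^{\infty} k^{m-1}\left[ s\sum_{j=1}^{\infty} (js)^{m+1} e^{-(js)k}\right]\,.
\]
For each fixed $k$, set $g(x) = k^{m-1}x^{m+1}e^{-xk}$; this $g$ lies in $C^1([0,\infty))$ with $g(0)=0$ and $g(x)\to 0$ as $x\to\infty$, so Lemma \ref{lemma: Euler_summation} applies. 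A change of variables $y = kx$ in $\int_0^\infty g(x)\,dx$ yields $\int_0^\infty g(x)\,dx = k^{m-1}\,\Gamma(m+2)/k^{m+2} = \Gamma(m+2)/k^{3}$, which is exactly the summand that produces $\zeta(3)$.

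Next I would establish the pointwise limit and a summable dominating bound, following the template of the earlier proof. Inequality \eqref{eq: euler_ineq_withs} gives
\[
\left| s\sum_{j=1}^{\infty} g(js) - \frac{\Gamma(m+2)}{k^{3}}\right| \leq s\int_0^\infty |g'(x)|\,dx\,,
\]
and since the right-hand side is $s$ times a finite constant, letting $s\downarrow 0$ gives the pointwise convergence $s\sum_j g(js)\to \Gamma(m+2)/k^3$ for each $k$. For the domination, inequality \eqref{eq: Euler_summation_ineq_2} gives the $k$-uniform bound
\[
\left| s\sum_{j=1}^\infty g(js) - \frac{\Gamma(m+2)}{k^3}\right| \leq \int_0^\infty x|g'(x)|\,dx\,.
\]
Computing $g'(x) = k^{m-1}x^{m}e^{-kx}\bigl((m+1) - kx\bigr)$ and substituting $y = kx$ shows $\int_0^\infty x|g'(x)|\,dx = k^{-3}\int_0^\infty y^{m+1}e^{-y}|(m+1)-y|\,dy = \tilde C_m\,k^{-3}$ for a constant $\tilde C_m$ independent of $k$. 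Hence the terms $k^{m-1}\bigl[s\sum_j g(js)\bigr]$ (the bracket already absorbs the $k^{m-1}$ in my notation for $g$) are dominated by $(1+\tilde C_m)\Gamma(m+2)\,k^{-3} $ times... more precisely, $\bigl|s\sum_j g(js)\bigr| \leq (\Gamma(m+2)+\tilde C_m)k^{-3}$, which is summable in $k$.

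With pointwise convergence and a summable majorant in hand, dominated convergence on the $k$-sum gives
\[
\lim_{s\downarrow 0} s^{m+2}F^{(m)}(-s) = \sum_{k=1}^\infty \frac{\Gamma(m+2)}{k^3} = \zeta(3)\,\Gamma(m+2)\,,
\]
which is the assertion. I do not expect a genuine obstacle here — the argument is structurally identical to Proposition \ref{lem: asymptotic_Gp_moments}, with the mild simplification that there is no auxiliary exponent $p$ (equivalently $p=1$), so the substitution $s\mapsto s^p$ is unnecessary. The only point requiring a little care is bookkeeping of where the factors $k^{m-1}$, $k^{m+1}$, and the $\Gamma$-factor land after the change of variables, and checking that the dominating constant $\int_0^\infty y^{m+1}e^{-y}|(m+1)-y|\,dy$ is indeed finite and $k$-independent — both of which are routine.
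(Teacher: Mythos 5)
Your proposal is correct and follows essentially the same route as the paper: the same Euler-summation lemma applied to $g(x)=k^{m-1}x^{m+1}e^{-kx}$, with $\int_0^\infty g(x)\,dx=\Gamma(m+2)/k^{3}$ producing the factor $\zeta(3)$. The only cosmetic difference is how the limit is interchanged with the $k$-sum: you invoke dominated convergence via the $s$-uniform bound \eqref{eq: Euler_summation_ineq_2} (as in the proof of Proposition \ref{lem: asymptotic_Gp_moments}), whereas the paper observes that the error bound from \eqref{eq: euler_ineq_withs} is already $s\,C_m k^{-2}$, summable in $k$, so summing over $k$ yields a total error of order $s\,C_m\,\zeta(2)\to 0$ directly, without any appeal to dominated convergence.
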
 
\begin{proof} Fix an integer $m \geq 0$. For any $s>0$ we can write
\begin{align*}
s^{m+2}F^{(m)}(-s) = s\sum_{j=1}^{\infty}\sum_{k=1}^{\infty}(sj)^{m+1}k^{m-1}{e^{-(js)k}}\,, \quad \text{ for any } s>0\,.
\end{align*}
Since this is a double series with non-negative terms, Tonelli’s theorem allows the exchange of summations, a step that will be taken later.
\medskip

Denote $g(x) = k^{m-1}x^{m+1}e^{-kx}$. Applying Lemma \ref{lemma: Euler_summation} we obtain that 
		\begin{align}\label{eq: Euler_sum_MacMahon_ineq}
	\left|s\sum_{j=1}^{\infty}(sj)^{m+1}k^{m-1}{e^{-(js)k}}-\int_{0}^{\infty}g(x)dx\right| \leq s \int_{0}^{\infty}|g^{\prime}(x)|dx\,.
\end{align}
\medskip

Additionally, it holds that
\begin{align*}
\int_{0}^{\infty}g(x)dx = \frac{1}{k^3}\,\Gamma(m+2)\,,
\end{align*}
and 
\begin{align*}
	\int_{0}^{\infty}|g^{\prime}(x)|dx = \frac{1}{k^2}\int_{0}^{\infty}y^{m}e^{-y}\left|(m+1)-y\right|dy\,,
\end{align*}
then inequality \eqref{eq: Euler_sum_MacMahon_ineq} gives that there exists a constant $C_{m} > 0$, not depending on $k$, such that 
		\begin{align*}
	\left|\sum_{k=1}^{\infty}s\sum_{j=1}^{\infty}(sj)^{m+1}k^{m-1}{e^{-(js)k}}-\zeta(3)\Gamma(m+2)\right| \leq s \, C_m \, \zeta(2)\,,
\end{align*}
and therefore we conclude that 
\begin{align*}
\lim_{s \downarrow 0}s^{m+2}F^{(m)}(-s) = \zeta(3)\Gamma(m+2)\,.
\end{align*}
\end{proof}

A direct application of Proposition \ref{propo: asymptotic_MacMahon_moments} gives that for any $m \geq 3$ there exists a constant $C_{m}>0$ such that 
\begin{align}\label{eq: asymptotic_quotient_fulcrums_G_p}
	\frac{F^{(m)}(-s)}{F^{\prime\prime}(-s)^{m/2}} \sim C_{m} \cdot s^{m-2}\,, \quad \text{ as } s\downarrow0\,,
\end{align}
then, applying Theorem \ref{thm: gaussianity all derivatives}, we conclude that the OGF of the plane partitions is Gaussian.

\subsubsection{OGF of the partitions with parts in a set $A$ of positive density.} Here we prove that the integer moments of the normalized Khinchin family associated to the OGF of the partitions with parts in a set $A$ of positive density converge to the integer moments of a standard Gaussian. 
\medskip

Denote by
\begin{align*}
d(A) = \lim_{n \rightarrow \infty}\frac{\# (A\cap[1,n])}{n}\,,
\end{align*}
the natural density of the set $A$.
\medskip

Let $A \subset \{1,2,3,...\}$ be a subset of natural numbers of positive density $d(A)>0$. We denote by
\begin{align*}
P_{A}(z) = \prod_{j \in A}^{\infty}\frac{1}{1-z^{j}}\,, \quad \text{ for any } |z|<1\,,
\end{align*}
the OGF of the partitions of integers with parts in the set $A$.
\medskip

The fulcrum of the power series $P_A(z)$ is given by
\begin{align*}
F_{A}(s) = -\sum_{j \in A}\ln(1-e^{sj}) = \sum_{k = 1}^{\infty}k^{m-1}\sum_{j\in A}j^me^{(js)k}
\end{align*}
and therefore, for any $m \geq 0$, we have that
\begin{align*}
F_A^{(m)}(s)= \sum_{k = 1}^{\infty}k^{m-1}\sum_{j \in A}j^me^{(js)k}
\end{align*}

\begin{propo}\label{lem: asymptotic_density_moments} For any integer $m \geq 0$ we have that
\begin{align*}
F_{A}^{(m)}(-s) \sim d(A)\zeta(2)\Gamma(m+1)\frac{1}{s^{m+1}}\,, \quad \text{ as } s \downarrow 0\,.
\end{align*}	 
\end{propo}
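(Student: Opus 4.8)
The plan is to mirror the structure of the proofs of Propositions for $G_p$ and the MacMahon function, now exploiting the positive-density hypothesis in place of an explicit arithmetic structure of the index set. Fix an integer $m\geq 0$ and, for $s>0$, write
\begin{align*}
s^{m+1}F_A^{(m)}(-s) = \sum_{k=1}^{\infty}k^{m-1}\left[s\sum_{j\in A}(js)^m e^{-(js)k}\right],
\end{align*}
and the aim is to show each inner bracket tends, as $s\downarrow 0$, to $d(A)\,\Gamma(m+1)/k^{m+1}$, after which a dominated-convergence argument over $k$ yields $\sum_k k^{m-1}\cdot d(A)\Gamma(m+1)/k^{m+1} = d(A)\zeta(2)\Gamma(m+1)$, which is the claimed limit.

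First I would handle the inner sum. Unlike the full-integer case, Lemma \ref{lemma: Euler_summation} does not directly apply, since we are summing over $j\in A$, not over all positive integers. Instead I would use summation by parts (Abel summation) against the counting function $N_A(x) = \#(A\cap[1,x])$. Writing $h_s(x) = k^{m-1}(xs)^m e^{-(xs)k}s$, which is $C^1$, vanishes at $0$ and at $+\infty$, we get
\begin{align*}
s\sum_{j\in A}(js)^m e^{-(js)k} k^{m-1} = -\int_0^\infty N_A(x)\, h_s'(x)\, dx.
\end{align*}
The positive-density hypothesis gives $N_A(x) = d(A)\,x + o(x)$ as $x\to\infty$. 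Substituting and rescaling $y = xs$, the leading term $d(A)x$ contributes $d(A)\int_0^\infty y\,k^{m-1}(y)^{m-1}\cdot(\text{derivative factors})$, which after an integration by parts reproduces $d(A)\int_0^\infty k^{m-1}y^m e^{-yk}\,dy = d(A)\Gamma(m+1)/k^{m+1}$; the error term $o(x)$, once rescaled, contributes $o(1/k^{m+1})$-type quantities that are controlled uniformly, using that the test function $y\mapsto y^m e^{-yk}$ and its derivative have integrals and first moments that are $O(1/k^{m+1})$ (as already computed in the $G_p$ proof with $p=1$). One must be a little careful near $x$ small, where $N_A(x)$ is not yet close to $d(A)x$; but there $h_s'(x)$ is $O(s)$ on any fixed compact set, so that region contributes $O(s)\to 0$.

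The main obstacle is making the error term from $N_A(x) - d(A)x = o(x)$ summable in $k$ with a bound independent of $s$, so that dominated convergence applies to the outer sum. The clean way is to split: for the leading part $d(A)x$, the bracket converges to $d(A)\Gamma(m+1)/k^{m+1}$ with the difference bounded by $\tilde C_m s /k^{m+1}$ exactly as in Proposition \ref{lem: asymptotic_Gp_moments} (case $p=1$), using inequality \eqref{eq: Euler_summation_ineq_2}; for the error part one writes, given $\varepsilon>0$, $|N_A(x)-d(A)x|\leq \varepsilon x$ for $x\geq X_\varepsilon$ plus a bounded contribution for $x<X_\varepsilon$, and checks that the resulting bound on the $k$-th term is $\varepsilon \cdot O(1/k^{m+1}) + O_\varepsilon(s)$. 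Summing over $k$ gives $\limsup_{s\downarrow 0}|s^{m+1}F_A^{(m)}(-s) - d(A)\zeta(2)\Gamma(m+1)| \leq C\varepsilon$ for every $\varepsilon>0$, hence the limit. Finally, as in the previous subsections, Proposition \ref{lem: asymptotic_density_moments} yields, for $m\geq 3$, that $F_A^{(m)}(-s)/F_A^{\prime\prime}(-s)^{m/2} \sim C_m\, s^{m/2-1}\to 0$ as $s\downarrow 0$, so Theorem \ref{thm: gaussianity all derivatives} shows $P_A$ is Gaussian.
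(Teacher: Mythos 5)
Your route is genuinely different from the paper's. The paper enumerates $A=\{l_1<l_2<\cdots\}$, uses $l_j/j\to 1/d(A)$ to sandwich $l_j$ between $(1\pm\varepsilon)j/d(A)$ for $j>N$, disposes of the finite block $j\le N$ separately, and then applies Lemma \ref{lemma: Euler_summation} to the two full\guio{integer} comparison sums, with dominated convergence in $k$ and finally $\varepsilon\to0$. You instead keep the counting function $N_A(x)=\#(A\cap[1,x])$ and integrate by parts (Abel summation), so the density enters through $N_A(x)=d(A)x+o(x)$; the leading part is then \emph{exact} rather than asymptotic, since $-d(A)\int_0^\infty x\,h_s'(x)\,dx=d(A)\int_0^\infty h_s(x)\,dx$ gives precisely $d(A)\Gamma(m+1)/k^{m+1}$ for each bracket, and only the remainder $N_A(x)-d(A)x$ needs estimating (so Lemma \ref{lemma: Euler_summation} is not really needed for the main term, contrary to what your sketch suggests). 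This is a legitimate, arguably cleaner, way to exploit positive density; the hypotheses $l_j\sim j/d(A)$ and $N_A(x)\sim d(A)x$ are equivalent.

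There is, however, one step that does not close as written: you claim the error part of the $k$-th term is bounded by $\varepsilon\cdot O(1/k^{m+1})+O_\varepsilon(s)$ and then sum over $k$. A quantity that is merely $O_\varepsilon(s)$, with no decay in $k$, is not summable over $k$ (the natural bound for the region $x<X_\varepsilon$ obtained from a constant bound on $|N_A(x)-d(A)x|$ is of order $C_\varepsilon\,s/k$, and $\sum_k s/k$ diverges), so neither dominated convergence nor direct summation applies to the remainder as stated. The repair stays entirely inside your framework: since $N_A(x)\le x$ and $d(A)\le 1$, one has $|N_A(x)-d(A)x|\le x$ for \emph{all} $x\ge 0$, hence, with your $h_s(x)=s\,k^{m-1}(xs)^m e^{-(xs)k}$ and the change of variables $y=kxs$,
\begin{align*}
\Bigl|\int_0^\infty\bigl(N_A(x)-d(A)x\bigr)h_s'(x)\,dx\Bigr|
&\le \varepsilon\int_0^\infty x\,|h_s'(x)|\,dx+\int_0^{X_\varepsilon}x\,|h_s'(x)|\,dx\\
&\le \frac{\varepsilon\,C_m}{k^2}+\frac{1}{k^2}\int_0^{kX_\varepsilon s}y^{m}e^{-y}\,|m-y|\,dy\,,
\end{align*}
exactly as in the $p=1$ computation of Proposition \ref{lem: asymptotic_Gp_moments}. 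The second summand is at most $C_m/k^2$ uniformly in $s$ and tends to $0$ for each fixed $k$ as $s\downarrow 0$, so dominated convergence in $k$ bounds the total error by $\varepsilon\,C_m\zeta(2)+o(1)$, and letting $\varepsilon\to 0$ finishes the proof. With this correction your argument is sound, and the concluding deduction of Gaussianity of $P_A$ via Theorem \ref{thm: gaussianity all derivatives} coincides with the paper's.
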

	\begin{proof}  Using that 
	\begin{align*}
		\lim_{j \rightarrow +\infty}\frac{l_j}{j} = \frac{1}{d(A)}\,,
	\end{align*}
	we obtain that, given $0 < \varepsilon <1$, there exists an integer $N>0$ such that 
	\begin{align}\label{eq: ineq_lj}
		\frac{(1-\varepsilon)}{d(A)}j \leq l_j \leq   \frac{(1+\varepsilon)}{d(A)}j\,, \quad \text{ for any } j > N
	\end{align} 
	
	We have the equality
	\begin{align*}
		s^{m+1}F_A^{(m)}(-s) &= \sum_{k = 1}^{\infty}k^{m-1}s\sum_{j =1}^{\infty}(sl_j)^me^{-(l_j s)k}\,. \\
	\end{align*}
We divide the previous sum into two pieces
	\begin{align}\label{eq: sum_pieces}
	\begin{split}
		\sum_{k = 1}^{\infty}k^{m-1}s\sum_{j =1}^{\infty}(sl_j)^me^{-(l_j s)k} 
		&=\sum_{k = 1}^{\infty}k^{m-1}s\sum_{j \leq N}(sl_j)^me^{-(l_j s)k}  \\ &+ \sum_{k = 1}^{\infty}k^{m-1}s\sum_{j > N}(sl_j)^me^{-(l_j s)k}.
	\end{split}
	\end{align}
	\medskip
	
	First we study the term
	\begin{align*}
		\sum_{k = 1}^{\infty}k^{m-1}s\sum_{j \leq N}(sl_j)^me^{-(l_j s)k}\,.
	\end{align*}
	Fix $m = 0$, we have 
	\begin{align*}
		\sum_{j \leq N}s\sum_{k = 1}^{\infty}\frac{e^{-l_j s k}}{k} = -\sum_{j \leq N}s\ln(1-e^{-sl_j})\,,
	\end{align*}
	therefore
	\begin{align*}
	\lim_{s \downarrow 0}\sum_{j \leq N}s\sum_{k = 1}^{\infty}\frac{e^{-l_j s k}}{k}  = 0\,.
	\end{align*}
	Now fix $m \geq 1$. Observe that
	\begin{align*}
		\sum_{k = 1}^{\infty}k^{m-1}s\sum_{j \leq N}(sl_j)^me^{-(l_j s)k} =s\sum_{j \leq N}l_j^m s\sum_{k = 1}^{\infty}(sk)^{m-1}e^{-(l_j s)k} 
	\end{align*}
	therefore 
	\begin{align*}
		\sum_{k = 1}^{\infty}k^{m-1}s\sum_{j \leq N}(sl_j)^me^{-(l_j s)k} \leq s\sum_{j \leq N}l_j^m s\sum_{k = 1}^{\infty}(sk)^{m-1}e^{-sk}
	\end{align*}
	Lemma \ref{lemma: Euler_summation} gives that 
	\begin{align*}
		\lim_{s \downarrow 0}\sum_{j \leq N}l_j^m s\sum_{k = 1}^{\infty}(sk)^{m-1}e^{-sk} = \left(\sum_{j \leq N}l_j^m\right) \Gamma(m)\,,
	\end{align*}
	then 
	\begin{align}\label{eq: finite_sum_lj_zero}
		\lim_{s \downarrow 0}\sum_{k = 1}^{\infty}k^{m-1}s\sum_{j \leq N}(sl_j)^me^{-(l_j s)k} = 0\,.
	\end{align}

	Now we study the second term in \eqref{eq: sum_pieces}, that is, 
	\begin{align*}
		\sum_{k = 1}^{\infty}k^{m-1}s\sum_{j > N}(sl_j)^me^{-(l_j s)k}.
	\end{align*}
	Fix $0<\varepsilon<1$. Using inequality \eqref{eq: ineq_lj} we find that
	\begin{align}\label{eq: inequality_right_limit_density}
	\begin{split}
		\frac{(1-\varepsilon)^m}{d(A)^m}s\sum_{j > N}(sj)^me^{-(js)k(1+\varepsilon)/d(A)} &\leq s\sum_{j > N}(sl_j)^me^{-(l_j s)k}  \\ &\leq \frac{(1+\varepsilon)^m}{d(A)^m}s\sum_{j > N}(sj)^me^{-(js)k(1-\varepsilon)/d(A)}\,,
	\end{split}
	\end{align}
	therefore applying Lemma \ref{lemma: Euler_summation} to the right-hand side of inequality \eqref{eq: inequality_right_limit_density}, and dominated convergence, we conclude that 
	\begin{align*}
		\lim_{s \downarrow 0}s^{m+1}F_{A}^{(m)}(-s) \leq \frac{(1+\varepsilon)^m}{(1-\varepsilon)^{m+1}}d(A)\zeta(2)\Gamma(m+1)\,.
	\end{align*}

	Now we prove, using almost the same argument, that there is a similar lower bound for our limit. We have the inequality
	\begin{align*}
		\frac{(1-\varepsilon)^m}{d(A)^m}s\sum_{j > N}(sj)^me^{-(js)k(1+\varepsilon)/d(A)} \leq s\sum_{j > N}(sl_j)^me^{-(l_j s)k}
	\end{align*}
	and asymptotically we have 
	\begin{align*}
		\frac{(1-\varepsilon)^m}{(1+\varepsilon)^{m+1}}d(A)\zeta(2)\Gamma(m+1)\leq \lim_{s \downarrow 0}s^{m+1}F_A^{(m)}(-s)
	\end{align*}
	Here we use that  
	\begin{align*}
		\lim_{s \downarrow 0}\frac{(1-\varepsilon)^m}{d(A)^m}\sum_{k = 1}^{\infty}sk^{m-1}\sum_{j \leq N}(sj)^me^{-(js)k(1+\varepsilon)/d(A)} = 0.
	\end{align*}
We proved the inequality
	\begin{align*}
			\frac{(1-\varepsilon)^m}{(1+\varepsilon)^{m+1}}d(A)\zeta(2)\Gamma(m+1) &\leq \lim_{s \downarrow 0}s^{m+1}F_A^{(m)}(-s) \\ &\leq \frac{(1+\varepsilon)^m}{(1-\varepsilon)^{m+1}}d(A)\zeta(2)\Gamma(m+1)\,.
	\end{align*}
		Making $\varepsilon \rightarrow 0$ we conclude that for any $m \geq 0$ we have
	\begin{align*}
		\lim_{s \downarrow 0}s^{m+1}F_A^{(m)}(-s) = d(A)\zeta(2)\Gamma(m+1)\,.
	\end{align*}
This concludes the proof. 
\end{proof}

Applying Proposition \ref{lem: asymptotic_density_moments} we conclude that for any $m \geq 3$ there exists a constant $C_{A.m}>0$ such that 
\begin{align*}
\frac{F_{A}^{(m)}(-s)}{F_{A}^{\prime\prime}(-s)^{m/2}} \sim C_{A,m} \, s^{m/2-1}\,, \quad \text{ as } s \downarrow 0\,,
\end{align*}
therefore, applying Theorem \ref{thm: gaussianity all derivatives}, we conclude that the OGF of the partitions of integers with parts in a set $A$ of positive density $d(A)>0$ is Gaussian. 
This claim includes various types of integer partitions, one of which is the partitions of integers where the parts are in arithmetic progression.

\subsection{Other applications}

We now apply the Gaussianity criterion to a specific family of canonical products, to the exponential of entire functions of finite order, and to the exponentials of clans.

\subsubsection{Canonical products of genus $0$ with negative zeros in the class $\K$.}
\medskip

As an example of the use of the gaussianity criterium of Theorem \ref{thm: gaussianity all derivatives} we next consider the power series in the class $\mathcal{Q}$. These power series are entire functions in $\K$ of genus 0 whose only zeros are negative. For convenience we require the normalization $f(0)=1$. Because of Hadamard's factorization Theorem, such an $f$ is a canonical product and may be expressed as 
\begin{align*}
	(\star) \quad f(z) = \prod_{j = 1}^{\infty}\left(1+\frac{z}{b_j}\right), \quad \text{ for any } z \in \C.
\end{align*}
where $(b_k)_{k \geq 1}$ is an increasing sequence of positive numbers such that
\begin{align*}
	\sum_{j = 1}^{\infty}\frac{1}{b_j}<+\infty\,.
\end{align*}
Observe that 
$$m_f(t)=\sum_{j=1}^\infty \frac{t}{(b_j+t)}\quad \mbox{and} \quad \sigma_f^2(t)=\sum_{j=1}^\infty \frac{t b_j}{(b_j+t)^2}\, , \quad \mbox{for any $t\ge 0$}\,,$$ 
and also that  
$\sigma_{f}^2(t)<m_f(t)\, ,  \mbox{ for any $t >0$}$, see \cite{CFFM3} for further details. 
\medskip

The fulcrum $F$ of  $f$ is given by 
\begin{align*}
	F(s) =  \ln(f(e^s)) =\sum_{j = 1}^{\infty}\ln\left(1+\frac{e^s}{b_j}\right)\, , \quad \mbox{for any $s \in \R$}\,.
\end{align*}

\begin{lem}\label{lemma: ineq_canonical}
	For each integer $k \ge 2$, there exists a constant $C_k >0$ such that for any $f$ as in $(\star)$ and any $s\in \R$
	\begin{equation*}
	\label{eq: ineq_canonical}\big|F^{(k)}(s)\big|\le C_k F^{\prime\prime}(s)\, .
	\end{equation*}
\end{lem}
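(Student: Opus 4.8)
The plan is to compute the derivatives of the fulcrum $F(s) = \sum_{j\geq 1}\ln(1+e^s/b_j)$ explicitly and express each as a convergent series of the form $\sum_j h_k(x_j)$, where $x_j = e^s/b_j$ and $h_k$ is a fixed rational function. First I would write $F^{(k)}(s) = \sum_{j\geq 1}\varphi_k(x_j)$ where each $\varphi_k$ is obtained by repeatedly applying the operator $x\,d/dx$ (since $d/ds = x\,d/dx$ when $x = e^s/b_j$) to $\varphi_1(x) = x/(1+x)$. A straightforward induction shows that $\varphi_k(x) = P_k(x)/(1+x)^k$ for some integer polynomial $P_k$ of degree $\leq k-1$, with $P_k(0)=0$ for $k\geq 1$ and, crucially, $\varphi_2(x) = x/(1+x)^2 > 0$ for $x>0$, so that $F''(s) = \sum_j \varphi_2(x_j) > 0$.

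The key step is then a pointwise domination: for each $k\geq 2$ there is a constant $C_k>0$ such that
\begin{align*}
|\varphi_k(x)| \leq C_k\,\varphi_2(x) = C_k\,\frac{x}{(1+x)^2}\,, \quad \text{for all } x\geq 0\,.
\end{align*}
This is a one-variable estimate: the function $|\varphi_k(x)|/\varphi_2(x) = |P_k(x)|(1+x)^{2-k}/x$ is continuous on $(0,\infty)$, tends to a finite limit as $x\downarrow 0$ (because $P_k(x)/x \to P_k'(0)$ and $k\geq 2$), and tends to $0$ as $x\to+\infty$ (because $\deg P_k \leq k-1$ forces $|P_k(x)|(1+x)^{2-k}/x = O(x^{-2})$), hence it is bounded on $(0,\infty)$; take $C_k$ to be that bound. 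Summing the inequality $|\varphi_k(x_j)|\leq C_k\varphi_2(x_j)$ over $j$ and using the triangle inequality gives $|F^{(k)}(s)| \leq \sum_j|\varphi_k(x_j)| \leq C_k\sum_j\varphi_2(x_j) = C_k F''(s)$, with $C_k$ independent of $s$ and of the particular sequence $(b_j)$ (it depends only on $k$). The termwise differentiation of the series for $F$ is justified by local uniform convergence, which follows from the bound $0\leq\varphi_2(x_j)\leq x_j = e^s/b_j$ and $\sum 1/b_j < \infty$ together with the analogous crude bounds $|\varphi_k(x_j)| = O(x_j)$ for each fixed $k$.

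The main obstacle is purely bookkeeping: verifying that the polynomials $P_k$ satisfy $\deg P_k \leq k-1$ and $P_k(0)=0$, which controls both endpoint behaviors of $|\varphi_k|/\varphi_2$. This is handled by the recursion $\varphi_{k+1} = x\varphi_k'$, i.e. $P_{k+1} = x\bigl[(1+x)P_k' - kP_k\bigr]/(1+x)$ — one checks $(1+x)P_k'-kP_k$ is divisible by... actually more simply, from $\varphi_{k+1}(x) = x\frac{d}{dx}\frac{P_k(x)}{(1+x)^k} = \frac{x(1+x)P_k'(x) - kx P_k(x)}{(1+x)^{k+1}}$, so $P_{k+1}(x) = x(1+x)P_k'(x) - kxP_k(x)$, which has $\deg \leq \max(\deg P_k + 1, \deg P_k + 1) = k$ (consistent with the bound $\deg P_{k+1}\le k$) and clearly vanishes at $x=0$. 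No genuine difficulty arises; the whole argument reduces to the elementary fact that a rational function that is $O(1)$ near $0$ and $o(1)$ at $\infty$ and continuous in between is bounded.
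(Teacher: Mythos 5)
Your proof is correct and follows essentially the same route as the paper: after the substitution $x_j=e^s/b_j$ both arguments reduce the lemma to a termwise, $s$\guio{independent} bound $|\varphi_k(x)|\le C_k\,x/(1+x)^2$ for all $x>0$ (the paper obtains it by writing the iterated derivative as an explicit linear combination of the functions $x^n/(1+x)^{n+1}$, each dominated by $x/(1+x)^2$, while you obtain it from the degree bound $\deg P_k\le k-1$, $P_k(0)=0$, and continuity), and then sums over $j$. One small slip: the ratio $|P_k(x)|(1+x)^{2-k}/x$ is $O(1)$, not $O(x^{-2})$, as $x\to+\infty$ (for $k=3$ it tends to $1$), but since your argument only needs boundedness of this ratio on $(0,\infty)$, the conclusion is unaffected.
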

\begin{proof}
	
	The first two derivatives of $F$ are given by
	\medskip
	\begin{align*}
		F^{\prime}(s) =  \sum_{j = 1}^{\infty}\frac{e^s}{(b_j+e^s)}\,, \quad \text{ and } \quad
		F^{\prime\prime}(s) =  \sum_{j = 1}^{\infty}\frac{b_je^{s}}{(b_j+e^s)^2}\,, \quad \mbox{for any $s \in \R$}\,.
	\end{align*}
	\medskip
	
	For any integers $n \geq 1$ and $b >0$, and any $s \in \R$, denote
	\vspace{.2 cm}
	\
	\begin{align*}
		A_{n}(b,s) = \frac{b e^{ns}}{(b+e^{s})^{n+1}}\,.
	\end{align*}
	\medskip
	
	For any $n \geq 1, b >0$ and $s \in \R$ we have the inequality
	\begin{align*}
		A_{n}(b,s) = \frac{be^{ns}}{(b+e^{s})^{n+1}}  = \left(\frac{e^{s}}{b+e^s}\right)^{n-1}\frac{be^{s}}{(b+e^s)^2} \leq A_{1}(b,s)\,.
	\end{align*}
	and also the differential identity
	\begin{align*}
		\frac{d}{ds}A_{n}(b,s) = nA_{n}(b,s)-(n+1)A_{n+1}(b,s)\,.
	\end{align*}
	\medskip
	
	By iteration, we obtain, from the identity above, and for any $k\ge 2$, real coefficients $\delta_{k,l}$, for $k \ge l \ge 0$, so that
	$$\frac{d^k}{ds^k}A_{1}(b,s)=\sum_{l=0}^k \delta_{k,l} A_{1+l}(b,s)\,, \quad \mbox{for any $s \in \R$ and any $b >0$}\,,$$
	and thus    that
	$$\Big|\frac{d^k}{ds^k}A_{1}(b,s)\Big|\le \sum_{l=0}^k |\delta_{k,l}| A_{1+l}(b,s) \le \left(\sum_{l=0}^k |\delta_{k,l}|\right) \, A_{1}(b,s) >0, \quad \mbox{for any $s \in \R$ and any $b>0$}\,.$$

	For $k \ge 2$, since $F^{(k)}(s)=\sum_{j=1}^\infty A_{1}^{(k-2)}(b_j,s)$, we conclude that, for any $s \in \R$, we have the inequality
	\begin{align*}
	|F^{(k)}(s)|\le \sum_{j=1}^\infty \Big|\frac{d^{(k{-}2)}}{ds} A_{1}(b_j,s)\Big|\le \underset{C_k}{\underbrace{\left(\sum_{l=0}^{k{-}2} |\delta_{k-2,l}|\right)}} \, \sum_{j=1}^\infty A_{1}(b_j,s)=C_k F^{\prime\prime}(s)\,.
	\end{align*}
\end{proof}

We use Lemma \ref{lemma: ineq_canonical} and Theorem \ref{thm: gaussianity all derivatives} to  prove

\begin{propo}\label{thm: canonical_gaussian} Let $f$ be a power series in $\mathcal{Q}$. If  $\lim_{t \to +\infty}\sigma_f^2(t) = +\infty$, then $f$ is Gaussian.
\end{propo}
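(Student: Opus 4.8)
The plan is to deduce this directly from Theorem~\ref{thm: gaussianity all derivatives}, using Lemma~\ref{lemma: ineq_canonical} to bound the quotients of derivatives of the fulcrum that enter that criterion. Since every $f \in \mathcal{Q}$ is entire, its radius of convergence is $R = +\infty$, so $\ln R = +\infty$ and the limits in Theorem~\ref{thm: gaussianity all derivatives} are taken as $s \to +\infty$. The first step is to translate the hypothesis into the language of the fulcrum: by the relations recorded in Section~\ref{section:auxiliary F} we have $\sigma_f^2(e^s) = F^{\prime\prime}(s)$ for all $s \in \R$, so the assumption $\lim_{t \to +\infty} \sigma_f^2(t) = +\infty$ is precisely $\lim_{s \to +\infty} F^{\prime\prime}(s) = +\infty$.

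Next, fix an integer $k \geq 3$. Lemma~\ref{lemma: ineq_canonical} provides a constant $C_k > 0$, depending only on $k$, with $|F^{(k)}(s)| \leq C_k F^{\prime\prime}(s)$ for every $s \in \R$. Since $F^{\prime\prime}(s) > 0$ for all $s$ (it is the variance function), this gives
\[
\frac{|F^{(k)}(s)|}{F^{\prime\prime}(s)^{k/2}} \leq \frac{C_k}{F^{\prime\prime}(s)^{k/2 - 1}}\,, \quad \text{for all } s \in \R\,.
\]
Because $k \geq 3$, the exponent satisfies $k/2 - 1 \geq 1/2 > 0$; letting $s \to +\infty$ and using $F^{\prime\prime}(s) \to +\infty$ we obtain $\lim_{s \to +\infty} F^{(k)}(s)/F^{\prime\prime}(s)^{k/2} = 0$.

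Since this holds for every $k \geq 3$, Theorem~\ref{thm: gaussianity all derivatives} applies and shows that $f$ is Gaussian. I expect no serious obstacle here: the substantive work is contained in Lemma~\ref{lemma: ineq_canonical}, which has already been established, and the only points needing care are the bookkeeping facts that $R = +\infty$ for entire functions and that $\sigma_f^2(e^s) = F^{\prime\prime}(s)$, after which the decay of each quotient is a one-line estimate.
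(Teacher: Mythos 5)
Your proof is correct and follows essentially the same route as the paper: invoke Lemma~\ref{lemma: ineq_canonical} to get $|F^{(k)}(s)|/F^{\prime\prime}(s)^{k/2} \leq C_k\,F^{\prime\prime}(s)^{1-k/2} = C_k\,\sigma_f(e^s)^{2-k}$, note this tends to $0$ for $k \geq 3$ since $\sigma_f^2(e^s)=F^{\prime\prime}(s)\to+\infty$, and conclude via Theorem~\ref{thm: gaussianity all derivatives}. No gaps.
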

\begin{proof} We use Theorem \ref{thm: gaussianity all derivatives}.
	We are going to show that for any $k \ge 3$ we have
	$$\lim_{s \to +\infty} \frac{|F^{(k)}(s)|}{F^{\prime\prime}(s)^{k/2}}=0\,.$$
	But 
	from Lemma \ref{lemma: ineq_canonical} above 
	\smallskip
	$$\frac{|F^{(k)}(s)|}{F^{\prime\prime}(s)^{k/2}}=\frac{|F^{(k)}(s)|}{F^{\prime\prime}(s)} \cdot {F^{\prime\prime}(s)}^{1-k/2}\le C_k \sigma_f(e^s)^{2-k},$$
	which tends to 0, as $s \to +\infty$ since $\lim_{s \to+\infty} \sigma_f(e^s)=+\infty$ and $k \ge 3$.
\end{proof}

Now we give examples of canonical products $f \in \mathcal{Q}$ of any order $\rho \in (0,1)$ and such that $\lim_{t\rightarrow+\infty}\sigma_f^2(t) = +\infty$.

\begin{example}[Gaussian canonical products in $\mathcal{Q}$ of order $\rho \in (0,1)$] Fix $\rho \in (0,1)$, if we take $b_n = n^{1/\rho}$ and define the canonical product $f \in \mathcal{Q}$ given by
\begin{align*}
f(z) = \prod_{j=1}^{\infty}\left(1+\frac{z}{b_j}\right)\,, \quad \text{ for any } z \in \C\,,
\end{align*}
then $\lim_{t \rightarrow+\infty}\sigma_f^2(t) =+\infty$, this follows from Proposition 5.4 in \cite[p. 32]{CFFM3}, and therefore $f$ is Gaussian.

\end{example}

The following result implies that for canonical products in $\mathcal{Q}$ of order $\rho \in (0,1)$, if the limit
\[
\lim_{s \to +\infty} \frac{F^{(k)}(s)}{F^{\prime\prime}(s)^{k/2}}
\]
exists for any $k \geq 3$, then it must equal zero.

\begin{propo} Let $f \in \mathcal{Q}$ be a canonical product. Assume that $f$ has order $\rho \in (0,1)$, then
	\begin{align*}
		\liminf_{s\rightarrow+\infty}\frac{|F^{(k)}(s)|}{F^{\prime\prime}(s)^{k/2}} = 0\,, \quad \text{ for any } k \geq 3.
	\end{align*}	
\end{propo}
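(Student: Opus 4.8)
The plan is to exploit the asymptotic behaviour of the derivatives of the fulcrum $F$ for a canonical product of order $\rho\in(0,1)$, combined with a Tauberian-type argument along the positive real axis. Recall that for $f\in\mathcal{Q}$ of order $\rho$ we have, after the change of variable $t=e^s$, that $F^{(k)}(s)$ is (up to elementary manipulations) comparable to $t\,\frac{d^{k-1}}{dt^{k-1}}\big(t\,\frac{d}{dt}\log f(t)\big)$, and the known growth $\log f(t)\asymp t^{\rho}$, as $t\to+\infty$, suggests that on average $F^{(k)}(s)\asymp t^{\rho}$ for every $k\ge 2$. In particular $F''(s)^{k/2}$ grows like $t^{\rho k/2}$, which is strictly faster than $t^{\rho}$ once $k\ge 3$ (since $\rho>0$). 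Thus the quotient $F^{(k)}(s)/F''(s)^{k/2}$ should tend to $0$ \emph{in an averaged sense}; the $\liminf$ statement is precisely what survives without assuming regular variation of the individual derivatives.

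The key steps, in order, would be: first, establish the two-sided estimate $F''(s)\asymp e^{\rho s}$ as $s\to+\infty$ — or at least $\limsup_{s\to\infty} e^{-\rho' s}F''(s)=+\infty$ for every $\rho'<\rho$ and $F''(s)=O(e^{\rho'' s})$ for every $\rho''>\rho$ — using the representation $F''(s)=\sum_j A_1(b_j,s)$ together with $b_n=$ (roughly) $n^{1/\rho}$-type growth forced by the order. Second, use Lemma \ref{lemma: ineq_canonical} to get the uniform bound $|F^{(k)}(s)|\le C_k F''(s)$, valid for all $s$. Third, combine these: $|F^{(k)}(s)|/F''(s)^{k/2}\le C_k F''(s)^{1-k/2}$, and since $1-k/2<0$ for $k\ge 3$, it suffices to show $\limsup_{s\to+\infty}F''(s)=+\infty$, which follows from step one (indeed $\sigma_f^2$ is nondecreasing here or at least unbounded for order $\rho>0$, as $F'(s)\to+\infty$ and a convexity/monotonicity argument on $F''$ gives unboundedness). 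Taking the $\liminf$ along a sequence $s_n\to+\infty$ on which $F''(s_n)\to+\infty$ then forces the quotient to $0$ along that sequence, hence $\liminf=0$.

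Actually, the cleanest route avoids step one almost entirely: by Lemma \ref{lemma: ineq_canonical}, $0\le |F^{(k)}(s)|/F''(s)^{k/2}\le C_k/F''(s)^{k/2-1}$ for all $s$ and all $k\ge 3$, so the $\liminf$ over $s\to+\infty$ is bounded above by $C_k\big(\liminf_{s\to+\infty}F''(s)\big)^{1-k/2}$ whenever $F''$ is eventually bounded below by a positive constant (which it is, being a sum of positive terms none of which vanishes, plus $F''(s)\to$ something positive or $\infty$). The real content is therefore reduced to proving $\limsup_{s\to+\infty}F''(s)=+\infty$, i.e.\ that the variance $\sigma_f^2(t)$ is unbounded for a canonical product of order $\rho\in(0,1)$ — and this is exactly the content invoked in the preceding example via Proposition 5.4 of \cite{CFFM3}. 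So the proof would cite that result (or reprove the unboundedness of $\sigma_f^2$ for order $\rho>0$ directly from $\sigma_f^2(t)=\sum_j t b_j/(b_j+t)^2$ and the density of the $b_j$'s forced by the order), then conclude by the displayed inequality.

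The main obstacle is pinning down the unboundedness of $F''(s)=\sigma_f^2(e^s)$ as $s\to+\infty$: for a canonical product of order exactly $\rho\in(0,1)$ one does \emph{not} automatically have $\sigma_f^2(t)\to\infty$ for all such products (the example is chosen to guarantee it), so one must either restrict to the regime where this holds or argue that $\limsup\sigma_f^2=+\infty$ always holds when $\rho>0$ — the latter follows because $m_f(t)\asymp t^\rho\to\infty$ while $\sigma_f^2(t)=t\,m_f'(t)$, and $m_f$ cannot grow to infinity with $t m_f'(t)$ bounded (a bounded $\sigma_f^2$ would force $m_f(t)=O(\log t)$, contradicting $m_f(t)\asymp t^\rho$). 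Making this last implication rigorous — ruling out pathological oscillation — is the one place requiring genuine care; everything else is the elementary inequality from Lemma \ref{lemma: ineq_canonical} and the observation that $1-k/2<0$.
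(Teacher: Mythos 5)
Your final ``cleanest route'' is exactly the paper's proof: the uniform bound $|F^{(k)}(s)|\le C_k F''(s)$ from Lemma \ref{lemma: ineq_canonical} gives $|F^{(k)}(s)|/F''(s)^{k/2}\le C_k F''(s)^{1-k/2}$, and one concludes along a sequence where $F''(s)=\sigma_f^2(e^s)\to+\infty$, the unboundedness $\limsup_{t\to\infty}\sigma_f^2(t)=+\infty$ following because a bounded variance would, upon integrating twice, force $\log f(t)=O((\log t)^2)$ and hence order $0$, contradicting $\rho>0$. The only blemishes are inessential: you do not need (and cannot in general assert) $m_f(t)\asymp t^\rho$ — the integration argument alone suffices — and the quantity that must blow up is $\limsup F''$, as you yourself correct.
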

\begin{proof}
	For entire functions $f \in \K$ of order $\rho>0$ we always have 
	\begin{align*}
		\limsup_{t \rightarrow \infty}\sigma_f^2(t) = +\infty\,,
	\end{align*}	
	otherwise, upon integration, we would find that $\rho = 0$; for an analysis of the behavior of the variance function see \cite[p.12]{CFFM3}. Combining this fact with Lemma \ref{lemma: ineq_canonical} we obtain that for canonical products $f \in \mathcal{Q}$ of order $\rho \in (0,1)$ we always have
	\begin{align*}
		\liminf_{s\rightarrow+\infty}\frac{|F^{(k)}(s)|}{F^{\prime\prime}(s)^{k/2}} = 0\,, \quad \text{ for any } k \geq 3\,.
	\end{align*}	
\end{proof}

\subsubsection{Exponential of an entire function of finite order in $\K$.} As another application of our moment criterium we prove that the exponential of a non-constant entire function of finite order, having non-negative coefficients, verify that the integer moments of the family $(\breve{X}_t)$ converge, as $t\rightarrow\infty$, to the integer moments of a standard normal random variable. As a consequence of this convergence we obtain, by a direct application of Theorem \ref{thm: gaussianity all derivatives}, that $f=e^g$ is Gaussian. 
\medskip

We have the following, standard, upper bound for the quotient of consecutive derivatives of a transcendental entire function.
\begin{lem}\label{lemma: entirefinitorderbound} Let $g$ be a transcendental entire function of finite order with non-negative coefficients, then there are constants $C>0$, $M>0$ and $T>0$ such that 
\begin{align*}
\frac{g^{\prime}(t)}{g(t)}\leq Ct^{M}, \quad \text{ for any } t\geq T\,.
\end{align*}
\end{lem}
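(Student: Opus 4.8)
The plan is to bound the logarithmic derivative $g'(t)/g(t)$ by recognizing it as $m_g(t)/t$, where $m_g(t)=tg'(t)/g(t)$ is the mean of the power–series distribution with weights $a_nt^n$ (well defined for every $t>0$, since a nonzero power series with non‑negative coefficients is strictly positive on $(0,\infty)$: if $a_k>0$ then $g(t)\ge a_kt^k>0$). As in \eqref{eq:m and sigma in terms of f}, one has $t\,m_g'(t)=\sigma_g^2(t)\ge 0$, so $m_g$ is non‑decreasing on $(0,\infty)$; this monotonicity is what makes the argument work.

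First I would use the monotonicity of $m_g$ over a fixed multiplicative interval: for every $t>0$,
\[
\log g(et)-\log g(t)=\int_t^{et}\frac{g'(s)}{g(s)}\,ds=\int_t^{et}\frac{m_g(s)}{s}\,ds\ \ge\ m_g(t)\int_t^{et}\frac{ds}{s}=m_g(t),
\]
so that $m_g(t)\le\log g(et)$. Next, since $g$ has non‑negative Taylor coefficients, $|g(z)|\le g(|z|)$, hence the maximum modulus satisfies $M_g(r)=g(r)$ for $r>0$; and since $g$ has finite order $\rho$, for any $\varepsilon>0$ there is $r_0>0$ with $\log g(r)=\log M_g(r)\le r^{\rho+\varepsilon}$ for $r\ge r_0$. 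Combining these, $m_g(t)\le(et)^{\rho+\varepsilon}$ for $t\ge r_0/e$, and dividing by $t$ gives
\[
\frac{g'(t)}{g(t)}=\frac{m_g(t)}{t}\le e^{\rho+\varepsilon}\,t^{\rho+\varepsilon-1},\qquad t\ge r_0/e.
\]
Taking, say, $\varepsilon=1$, $C=e^{\rho+1}$, $M=\rho+1>0$ and $T=\max\{1,\,r_0/e\}$ (so that $t^{\rho+\varepsilon-1}\le t^{M}$ for $t\ge T$) yields the claim.

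The main obstacle to avoid is the temptation to use a crude Cauchy estimate such as $|g'(t)|\le M_g(2t)/t=g(2t)/t$: this only gives $g'(t)/g(t)\le g(2t)/\big(t\,g(t)\big)$, and $g(2t)/g(t)$ need not be polynomially bounded (for instance $g=e^{z}$). The essential step is therefore the first one above — controlling the logarithmic derivative by the increment of $\log g$ across a bounded multiplicative interval, using that the mean $m_g$ is monotone. Once that is in place, the finite‑order hypothesis enters only through the elementary bound $\log M_g(r)\le r^{\rho+\varepsilon}$, and the rest is routine bookkeeping.
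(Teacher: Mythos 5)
Your proof is correct and follows essentially the same route as the paper's: monotonicity of the mean $m_g(t)=tg'(t)/g(t)$ integrated over a multiplicative interval (the paper uses a general $\lambda>1$ where you take $\lambda=e$), followed by the finite-order bound on $\log g=\log M_g$. One small patch: dropping the term $-\log g(t)$ to get $m_g(t)\le\log g(et)$ requires $g(t)\ge 1$, which your choice $T=\max\{1,r_0/e\}$ does not by itself guarantee, so enlarge $T$ so that also $g(t)>1$ there (possible since $g$ is non-constant with non-negative coefficients, hence $g(t)\to+\infty$; the paper makes this step explicit).
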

\begin{proof}
Using that $m_f$ is increasing, for any $\lambda>1$, we have
\begin{align*}
	m_g(t)\ln(\lambda) \leq \int_{t}^{\lambda t}m_g(s)\frac{ds}{s} = \ln(g(\lambda t)) - \ln(g(t)) 
\end{align*}

The function $g$ is a non-constant entire function with non-negative coefficients (increasing and unbounded), then there exists some $M>0$ such that $g(t) >1$, for any $t > M$, otherwise $g$ would be constant. From the previous inequality we obtain that $-\ln(g(t))<0$, for any $t > M$, and therefore 
\begin{align*}
	m_g(t)\ln(\lambda) \leq \ln(g(\lambda t)), \quad \text{ for any } t>M.
\end{align*}

Now, given that $g$ has finite order $\rho\geq0$: for any fixed $\lambda>1$, and for any $\varepsilon>0$ there are constants $C>0$ and $T>0$ such that 
\begin{align*}
	m_g(t) \leq Ct^{\rho+\varepsilon}\,,\quad \text{ for any } t>T,
\end{align*}
which is equivalent to 
\begin{align}\label{eq: inequality_finite_order}
	\frac{g^{\prime}(t)}{g(t)} \leq Ct^{\rho+\varepsilon-1}\,,\quad \text{ for any } t>T\,.
\end{align}	
The result follows from the previous inequality. 
\end{proof}

Let $g$ be a non-constant power series with radius $R>0$ and denote $f=e^g$ its exponential, then the fulcrum of $f$ can be written as
\begin{align*}
F(z) = g(e^z)\,,\quad \text{ for any } z \in \D(0,R).
\end{align*}

Our first lemma expresses the derivatives of the fulcrum $F$ as finite linear combination of terms of the form $e^{js}g^{(j)}(e^s)$, where $j \geq 1$.
\begin{lem}\label{lemma: derivatives_fulcrum_linear_combination} Let $g$ be a non-constant power series with non-negative coefficients and radius $R>0$. Denote $f =e^g$ is exponential. Then, for any $k \geq 1$, there are integer constants $c_1,\dots,c_{k-1} \geq 0$ such that 
	\begin{align*}
		F^{(k)}(s) =e^{ks}g^{(k)}(e^s)+\sum_{j = 1}^{k-1}c_j (e^{sj} g^{(j)}(e^s)), \quad \text{ for any } s < \ln(R)\,.
	\end{align*}	
\end{lem}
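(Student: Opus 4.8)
The statement is about the $k$-th derivative of $F(s) = g(e^s)$, so the natural tool is Fa\`a di Bruno's formula applied to the composition $s \mapsto e^s \mapsto g(e^s)$. I would proceed by induction on $k$, which keeps the bookkeeping transparent and avoids invoking the general multivariate Bell-polynomial machinery.

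The base case $k=1$ is immediate: $F'(s) = e^s g'(e^s)$, which has the claimed shape with no lower-order terms. For the inductive step, assume $F^{(k)}(s) = e^{ks} g^{(k)}(e^s) + \sum_{j=1}^{k-1} c_j\, e^{sj} g^{(j)}(e^s)$ for some non-negative integers $c_j$. Differentiating once and using $\frac{d}{ds}\big(e^{sj} g^{(j)}(e^s)\big) = j\, e^{sj} g^{(j)}(e^s) + e^{s(j+1)} g^{(j+1)}(e^s)$, I would collect terms: the leading term $e^{ks} g^{(k)}(e^s)$ produces $k\, e^{ks} g^{(k)}(e^s) + e^{(k+1)s} g^{(k+1)}(e^s)$, giving the new leading term $e^{(k+1)s} g^{(k+1)}(e^s)$ with coefficient $1$, while each $c_j\, e^{sj} g^{(j)}(e^s)$ produces $j c_j\, e^{sj} g^{(j)}(e^s) + c_j\, e^{s(j+1)} g^{(j+1)}(e^s)$. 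Regrouping by the order of the derivative of $g$ that appears, the coefficient of $e^{sj} g^{(j)}(e^s)$ in $F^{(k+1)}$ becomes $j c_j + c_{j-1}$ for $2 \le j \le k-1$, equals $c_{k-1} + k$ for $j = k$ (combining the old leading term's contribution with $c_{k-1}$), and equals $c_1$ for $j=1$. Since each of these is a non-negative integer whenever the $c_j$ are, the induction closes and the new constants are again non-negative integers.

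There is essentially no obstacle here — the only point requiring a moment of care is making sure the indices line up correctly in the regrouping step (in particular that the term $c_{k-1}\, e^{s(k-1)(+1)} g^{(k)}(e^s)$ merges with the shifted leading term rather than being double-counted), and that all coefficients stay in $\Z_{\ge 0}$, which is visible from the recursion $c_j^{\text{new}} = j c_j + c_{j-1}$. The validity of term-by-term differentiation is justified by the fact that $g$ is entire (or, more generally, holomorphic on $\D(0,R)$) and $s \mapsto e^s$ maps $(-\infty, \ln R)$ into $[0,R)$, so $F$ is real-analytic on that interval and all the manipulations are legitimate. I would present the computation compactly, stating the recursion for the coefficients and noting that non-negativity and integrality are preserved, rather than writing out the full expansion.
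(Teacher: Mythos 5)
Your induction is correct: the base case $F'(s)=e^sg'(e^s)$ and the regrouping with the recursion $c_j \mapsto jc_j+c_{j-1}$ (with the leading coefficient staying equal to $1$) is exactly the Stirling-number-of-the-second-kind recurrence, and non-negativity and integrality are manifestly preserved; differentiation is just the chain rule for the composition of the holomorphic $g$ with $s\mapsto e^s\in(0,R)$, so no term-by-term issue even arises. The paper states this lemma without proof, and your argument is the standard computation one would supply, so there is nothing to compare beyond noting that your write-up fills in precisely the omitted routine verification.
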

%\begin{proof} We proceed by induction on $n \geq 1$. For $n = 1$ we have 
%	\begin{align*}
%		F^{\prime}(s) = e^sg^{\prime}(e^s), \quad \text{ for any } s<\ln(R).
%	\end{align*}
%	
%	Assume that for some $n \geq 1$, there are integer constants $c_1,\dots,c_{n-1} \geq 0$ such that 
%	\begin{align*}
%		F^{(n)}(s) =e^{ns}g^{(n)}(e^s)+\sum_{j = 1}^{n-1}c_j (e^{sj} g^{(j)}(e^s)), \quad \text{ for any } s < \ln(R),
%	\end{align*}	
%	then, taking derivatives, we find that 
%	\begin{align*}
%		F^{(n+1)}(s) = e^{(n+1)s}g^{(n+1)}(e^s)+ne^{ns}g^{(n)}(e^s)+\sum_{j=1}^{n-1}c_j(je^{js}g^{(j)}(e^s)+e^{(j+1)s}g^{(j+1)}(e^s))
%	\end{align*}
%	and then we have proved that there are integer constants $\tilde{c}_1,\dots,\tilde{c}_n \geq 0$ such that 
%	\begin{align*}
%		F^{(n+1)}(s) = e^{(n+1)s}g^{(n+1)}(e^s)+\sum_{j = 1}^{n}\tilde{c}_j (e^{sj} g^{(j)}(e^s)), \quad \text{ for any } s < \ln(R).
%	\end{align*}
%\end{proof}

From the previous lemma we obtain asymptotic comparisons, in terms of $g$, for the fulcrum $F$ and any of its derivatives.
\begin{lem}\label{coro: lemma: derivativa_fulcrum_e^g_comparable} Let $g$ be a non-constant power series with non-negative coefficients and radius $R>0$. Denote $f=e^g$ its exponential, then: 
	\begin{itemize}	
		\renewcommand{\labelitemi}{\raisebox{0.3ex}{\scalebox{0.6}{$\blacksquare$}}}
		\item If $g$ is not a polynomial, then
		\begin{align*}
			F^{(k)}(s) \asymp e^{ks}g^{(k)}(e^s), \quad \text{ as } s \uparrow \ln(R).
		\end{align*}
		\item If $g(z) = a_Nz^n+\dots+a_0$ is a polynomial of degree $N \geq 1$, then, for any $k \geq 1$, we have 
		\begin{align*}
			F^{(k)}(s) \asymp a_Ne^{sN}, \quad \text{ as } s \rightarrow +\infty.
		\end{align*}
	\end{itemize}
\end{lem}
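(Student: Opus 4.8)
The plan is to leverage Lemma \ref{lemma: derivatives_fulcrum_linear_combination}, which expresses $F^{(k)}(s)$ as $e^{ks}g^{(k)}(e^s)$ plus a nonnegative linear combination of lower-order terms $e^{sj}g^{(j)}(e^s)$ with $1 \le j \le k-1$. The whole argument reduces to understanding the relative sizes of the terms $e^{sj}g^{(j)}(e^s)$ as $s \uparrow \ln(R)$, so I would first set up notation: write $t = e^s$ and study $t^j g^{(j)}(t)$ as $t \uparrow R$ (with $R = +\infty$ in the transcendental case). Since $g$ has nonnegative coefficients and is non-constant, each $t^j g^{(j)}(t)$ is positive and increasing for large $t$ (or for $t$ near $R$), and the $k$-th term $t^k g^{(k)}(t)$ is the one we want to show dominates. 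Because all the $c_j$ are nonnegative and the leading coefficient of $t^k g^{(k)}(t)$ in the linear combination is $1$, we automatically get the lower bound $F^{(k)}(s) \ge e^{ks}g^{(k)}(e^s)$, so only the upper bound $F^{(k)}(s) \le C\, e^{ks}g^{(k)}(e^s)$ requires work; equivalently, it suffices to bound each lower term $e^{sj}g^{(j)}(e^s)$ by a constant multiple of $e^{ks}g^{(k)}(e^s)$.

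For the first bullet (where $g$ is not a polynomial, hence transcendental, and $R = +\infty$ since $g$ is entire — here I would note that "not a polynomial power series with nonnegative coefficients and positive radius" forces the entire, transcendental, finite-or-infinite behavior that makes $g^{(k)}$ itself transcendental with nonnegative coefficients), I would argue that $g^{(j)}(t) \le g^{(k)}(t)$ for $t$ large, since $g^{(k)}$ is obtained from $g^{(j)}$ by differentiating $k-j$ more times and a transcendental entire function with nonnegative coefficients grows faster than any of its lower derivatives eventually — more carefully, $g^{(j)}(t)/g^{(j+1)}(t) \to 0$ is not needed; it is enough that $g^{(j)}$ is eventually dominated by $g^{(k)}$, which follows because $g^{(k)}$ has a tail of positive coefficients that $g^{(j)}$ multiplied by any fixed power cannot outrun for large $t$. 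Then $e^{sj}g^{(j)}(e^s) = t^j g^{(j)}(t) \le t^j g^{(k)}(t) \le t^k g^{(k)}(t)$ for $t \ge 1$, and summing the finitely many terms gives $F^{(k)}(s) \le (1 + \sum_j c_j)\, e^{ks}g^{(k)}(e^s)$, yielding the claimed comparability $F^{(k)}(s) \asymp e^{ks}g^{(k)}(e^s)$ as $s \to +\infty$.

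For the second bullet, with $g(z) = a_N z^N + \dots + a_0$ of degree $N \ge 1$ and $a_N > 0$, I would simply compute: $e^{sj}g^{(j)}(e^s)$ is a polynomial in $t = e^s$ of degree exactly $N$ whenever $j \le N$ (its top term is $\frac{N!}{(N-j)!} a_N t^N$) and is identically zero for $j > N$. Plugging into Lemma \ref{lemma: derivatives_fulcrum_linear_combination}, $F^{(k)}(s)$ is a polynomial in $t$ whose leading term is a positive constant multiple of $a_N t^N$ (the coefficients are all nonnegative and the $j = N$ contribution is strictly positive, or if $k \le N$ the $j=k$ term $\frac{N!}{(N-k)!}a_N t^N$ already gives it), so $F^{(k)}(s) \sim c\, a_N e^{sN}$ and in particular $F^{(k)}(s) \asymp a_N e^{sN}$ as $s \to +\infty$.

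The main obstacle will be making the domination $g^{(j)}(t) \lesssim g^{(k)}(t)$ in the transcendental case fully rigorous: one must check that differentiating a nonnegative-coefficient transcendental series finitely many more times produces something that eventually dominates, uniformly enough to absorb the extra fixed power $t^{k-j}$. I expect the cleanest route is to pick any single index $n_0 \ge k$ with $a_{n_0} > 0$ (which exists by transcendentality), bound $g^{(j)}(t)$ below and $g^{(k)}(t)$ above term-by-term, or more slickly observe $t^{k}g^{(k)}(t) \ge$ (a tail of the $t^{k}g^{(k)}$-series) and $t^j g^{(j)}(t)$ is, coefficientwise, eventually smaller once divided by a suitable power — this is routine but needs to be stated with care. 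Everything else is bookkeeping with the finite sum from Lemma \ref{lemma: derivatives_fulcrum_linear_combination} and the nonnegativity of its coefficients.
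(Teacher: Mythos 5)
Your overall skeleton is the right one (use Lemma \ref{lemma: derivatives_fulcrum_linear_combination}, get the lower bound for free from nonnegativity of the $c_j$ and of $g^{(j)}(e^s)$, and prove the upper bound by dominating each lower-order term), but the way you justify the upper bound in the non-polynomial case has two genuine gaps. First, you assume that ``not a polynomial'' forces $g$ to be entire with $R=+\infty$; the lemma is stated for an arbitrary radius $R>0$, and the paper applies it (in Theorem \ref{thm: exponentialclanGaussian}) to clans $g$ whose radius may be finite, so the regime $s\uparrow\ln R$ with $R<\infty$ cannot be discarded. Second, and more seriously, your key step rests on the claim that $g^{(j)}(t)\le g^{(k)}(t)$ for $t$ large because a transcendental entire function with nonnegative coefficients ``grows faster than any of its lower derivatives eventually''. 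That is false: for $g(t)=\sum_{n\ge0}t^n/(n!)^2$ (nonnegative coefficients, transcendental, entire of order $1/2$) one has $g'(t)/g(t)\to0$ as $t\to+\infty$ (the function grows like $e^{2\sqrt t}$ up to algebraic factors), so in fact $g^{(k)}(t)=o(g^{(j)}(t))$ for $k>j$. As written, the inequality you invoke, and hence your proof of the upper bound, fails.

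The inequality you actually need is weaker and is true, and the clean way to get it is the coefficientwise comparison you allude to at the end: writing $t=e^s$, the coefficient of $t^n$ in $t^jg^{(j)}(t)$ is $a_n\,n!/(n-j)!$, which for $n\ge k$ is at most $a_n\,n!/(n-k)!$, the corresponding coefficient of $t^kg^{(k)}(t)$; the leftover terms with $j\le n\le k-1$ form a polynomial $P(t)$ of degree at most $k-1$ with nonnegative coefficients. Hence $t^jg^{(j)}(t)\le t^kg^{(k)}(t)+P(t)$, and since $g$ is not a polynomial there is $n_0\ge k$ with $a_{n_0}>0$, so $t^kg^{(k)}(t)\ge c\,t^{n_0}$ with $n_0 \geq k$ and is bounded below near $R$; thus $P(t)=O\bigl(t^kg^{(k)}(t)\bigr)$ as $t\uparrow R$, whether $R$ is finite or infinite. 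Summing the finitely many terms in Lemma \ref{lemma: derivatives_fulcrum_linear_combination} then gives $F^{(k)}(s)\le C_k\,e^{ks}g^{(k)}(e^s)$ near $\ln R$, which repairs the first bullet in full generality (alternatively, iterate the elementary bound $h(t)\le t\,h'(t)+h(0)$ valid for nonnegative coefficients). For the polynomial bullet your computation is essentially fine, but when $k>N$ you need $c_N>0$, which the statement of Lemma \ref{lemma: derivatives_fulcrum_linear_combination} does not provide (it only gives $c_j\ge0$); either note that the $c_j$ are Stirling numbers of the second kind, hence at least $1$, or bypass the lemma by differentiating $F(s)=\sum_{n=0}^N a_ne^{ns}$ directly, which gives $F^{(k)}(s)=\sum_{n=1}^N n^ka_ne^{ns}\sim N^ka_Ne^{Ns}$ as $s\to+\infty$.
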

\medskip

\begin{propo}\label{thm: converge_moments_e^g_transcedental}Let $g$ be a non-constant entire function, with non-negative coefficients, and finite order $\rho\geq0$. Let $f=e^g$ be its exponential and denote $(\breve{X}_t)$ its normalized Khinchin family, then
\begin{align*}
\lim_{t\rightarrow\infty}\E(\breve{X}_t^m) = \E(Z^m)\,, \quad \text{ for any integer } m\geq1\,.
\end{align*}
Here $Z$ denotes a standard normal random variable. 
\end{propo}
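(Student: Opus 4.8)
The plan is to apply the moment criterion of Theorem \ref{thm: gaussianity all derivatives} via Theorem \ref{lemma: convergence_moments_fulcrum}: it suffices to show that for every integer $k \geq 3$ we have $F^{(k)}(s)/F^{\prime\prime}(s)^{k/2} \to 0$ as $s \uparrow \ln(R) = +\infty$ (recall $g$ entire means $R = +\infty$). Two regimes must be distinguished according to Lemma \ref{coro: lemma: derivativa_fulcrum_e^g_comparable}: the polynomial case and the transcendental case.

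\medskip

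First I would dispose of the polynomial case. If $g(z) = a_N z^N + \dots + a_0$ has degree $N \geq 1$ with $a_N > 0$, then Lemma \ref{coro: lemma: derivativa_fulcrum_e^g_comparable} gives $F^{(k)}(s) \asymp a_N e^{sN}$ as $s \to +\infty$ for every $k \geq 1$; in particular $F^{\prime\prime}(s) \asymp a_N e^{sN} \to +\infty$, so for $k \geq 3$
\[
\frac{|F^{(k)}(s)|}{F^{\prime\prime}(s)^{k/2}} \asymp \frac{a_N e^{sN}}{(a_N e^{sN})^{k/2}} = \frac{1}{a_N^{k/2-1}} e^{sN(1-k/2)} \longrightarrow 0 \quad \text{as } s \to +\infty,
\]
since $N(1 - k/2) < 0$. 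This already yields $\lim_{t\to\infty}\E(\breve X_t^m) = \E(Z^m)$ for all $m$ by Theorem \ref{lemma: convergence_moments_fulcrum}.

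\medskip

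Next, the transcendental case, which is the substantive one. Here Lemma \ref{coro: lemma: derivativa_fulcrum_e^g_comparable} gives $F^{(k)}(s) \asymp e^{ks} g^{(k)}(e^s)$ as $s \to +\infty$, so writing $t = e^s$ I need to control the ratio $t^k g^{(k)}(t) / (t^2 g^{\prime\prime}(t))^{k/2}$. The idea is to bound each derivative $g^{(k)}$ in terms of $g$ itself: since $g^{(k)}$ is again a transcendental entire function of finite order (same order $\rho$) with non-negative coefficients, Lemma \ref{lemma: entirefinitorderbound} applies to it (and to $g', g'', \dots$) to yield constants so that $t\, g^{(j+1)}(t)/g^{(j)}(t) \leq C_j t^{M}$ for large $t$; chaining these gives $t^{k-2} g^{(k)}(t) / g^{\prime\prime}(t) \leq C t^{M'}$ for suitable $C, M' > 0$ and all large $t$. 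Meanwhile $F^{\prime\prime}(s) = \sigma_f^2(e^s) \asymp t^2 g^{\prime\prime}(t)$, and since $f = e^g$ is transcendental entire its variance grows faster than any power: more precisely $m_f(t) = t g'(t) \to \infty$ and, because $g$ is transcendental, $g^{\prime\prime}(t)$ (hence $\sigma_f^2(t)$) grows super-polynomially — one can see this from the fact that a transcendental entire function with non-negative coefficients has $\log g(t)/\log t \to \infty$, forcing $m_g(t) = tg'(t)/g(t)$ and thence $\sigma_f^2$ to beat every power of $t$. Then
\[
\frac{|F^{(k)}(s)|}{F^{\prime\prime}(s)^{k/2}} \asymp \frac{t^k g^{(k)}(t)}{(t^2 g^{\prime\prime}(t))^{k/2}} = \frac{t^{k-2} g^{(k)}(t)/g^{\prime\prime}(t)}{(t^2 g^{\prime\prime}(t))^{k/2-1}} \leq \frac{C t^{M'}}{\sigma_f^2(t)^{k/2-1}} \longrightarrow 0,
\]
since for $k \geq 3$ the exponent $k/2 - 1 > 0$ and $\sigma_f^2(t)$ grows faster than any polynomial. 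Applying Theorem \ref{lemma: convergence_moments_fulcrum} once more gives the moment convergence for all $m \geq 1$ ($m = 1, 2$ being trivial since $\E(\breve X_t) = 0$ and $\E(\breve X_t^2) = 1$).

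\medskip

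The main obstacle I anticipate is the transcendental case's super-polynomial lower bound on $\sigma_f^2(t) = t^2 g^{\prime\prime}(t) + t g'(t)$: one must argue carefully that $g$ transcendental entire with non-negative coefficients forces $\sigma_f^2(t)/t^{M'} \to \infty$ for the \emph{particular} exponent $M'$ produced by iterating Lemma \ref{lemma: entirefinitorderbound}. This is where the non-negativity of the coefficients of $g$ is essential (it makes $g, g', g'', \dots$ all increasing and lets one pass between $g$ and its derivatives cleanly), and where one invokes that $\log g(t)/\log t \to \infty$ for transcendental $g$ — a standard fact, but it needs to be stated and linked to the growth of $m_g$ and thereby of the fulcrum's second derivative. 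Once that lower bound is in hand the rest is the bookkeeping of exponents sketched above.
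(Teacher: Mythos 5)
Your proposal is correct and follows essentially the same route as the paper: the polynomial case via Lemma \ref{coro: lemma: derivativa_fulcrum_e^g_comparable}, and the transcendental case by chaining Lemma \ref{lemma: entirefinitorderbound} over consecutive derivatives $g^{(j+1)}/g^{(j)}$ to get a polynomial bound, then killing it with the super-polynomial growth of $g''$ (equivalently $\sigma_f^2$) coming from $g$ being transcendental with non-negative coefficients. The ``obstacle'' you flag is not really one: since $g''$ has infinitely many positive Taylor coefficients it dominates every power of $t$ directly, which is exactly the one-line justification the paper uses.
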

\begin{proof} First assume that $g(z)=a_Nz^N+a_{N-1}z^{N-1}+\dots+a_0$ is a polynomial of degree $N$, then by virtue of Lemma \ref{coro: lemma: derivativa_fulcrum_e^g_comparable} we have 
	\begin{align*}
		\frac{F^{(k)}(s)}{F^{\prime\prime}(s)^{k/2}} \asymp \frac{1}{(a_Ne^{Ns})^{k/2-1}}\,, \quad \text{ as } s\rightarrow+\infty\,.
	\end{align*}
and therefore 
\begin{align*}
\lim_{s\rightarrow+\infty}\frac{F^{(k)}(s)}{F^{\prime\prime}(s)^{k/2}} = 0\,, \quad \text{ for any } k \geq 3\,.
\end{align*}
	
Assume now that $g$ is a transcendental entire function of finite order $\rho\geq0$. For any $k\geq 3$, applying Lemma \ref{coro: lemma: derivativa_fulcrum_e^g_comparable}, we conclude that
\begin{align*}
\frac{F^{(k)}(s)}{F^{\prime\prime}(s)^{k/2}} \asymp \frac{g^{(k)}(e^s)}{g^{\prime\prime}(e^s)^{k/2}}\,, \quad \text{ as } s\rightarrow+\infty\,.
\end{align*}

Because $g$ is a transcendental entire function Lemma \ref{lemma: entirefinitorderbound} gives that: for any $k \geq 3$, there are constants $C>0$, $M>0$ and $T>0$ such that
\begin{align*}
\frac{g^{(k)}(e^s)}{g^{\prime\prime}(e^s)^{k/2}} = \frac{g^{(k)}(e^s)}{g^{(k-1)}(e^s)}\dots\frac{g^{\prime\prime\prime}(e^s)}{g^{\prime\prime}(e^s)}\frac{1}{g^{\prime\prime}(e^s)^{k/2-1}} \leq C\frac{e^{Ms}}{g^{\prime\prime}(e^s)^{k/2-1}}\,, \quad \text{ for any } t\geq T\,.
\end{align*}
\smallskip

Using again that $k\geq 3$ and $g$ is transcendental we conclude that 
\begin{align*}
\lim_{s\rightarrow+\infty}\frac{e^{Ms}}{g^{\prime\prime}(e^s)^{k/2-1}} = 0\,,
\end{align*}
and therefore 
\begin{align*}
\lim_{s\rightarrow+\infty}\frac{F^{(k)}(s)}{F^{\prime\prime}(s)^{k/2}} = 0\,, \quad \text{ for any } k \geq 3\,.
\end{align*}
\end{proof}

	\begin{coro}\label{coro: exps_entire_Gaussian} Assume that $g$ is a non-constant entire function, with non-negative coefficients, and finite order $\rho\geq0$, then $f = e^g$ is Gaussian. 
\end{coro}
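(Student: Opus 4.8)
The plan is to deduce this directly from Proposition \ref{thm: converge_moments_e^g_transcedental} together with the method of moments. Under the stated hypotheses---$g$ non-constant entire with non-negative coefficients and finite order $\rho \geq 0$---Proposition \ref{thm: converge_moments_e^g_transcedental} already gives that the normalized Khinchin family $(\breve{X}_t)$ associated to $f = e^g$ satisfies
\[
\lim_{t \to \infty} \E(\breve{X}_t^m) = \E(Z^m), \quad \text{for every integer } m \geq 1,
\]
where $Z$ is a standard normal random variable. Since the standard normal distribution is determined by its moments, by \cite[Theorem 30.2]{Billingsley} this moment convergence implies $\breve{X}_t \stackrel{d}{\rightarrow} \text{N}(0,1)$ as $t \uparrow R = +\infty$, which is precisely the assertion that $f$ is Gaussian.

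Alternatively, and even more directly, one can bypass the method of moments entirely: the proof of Proposition \ref{thm: converge_moments_e^g_transcedental} in fact establishes the stronger statement that
\[
\lim_{s \to +\infty} \frac{F^{(k)}(s)}{F^{\prime\prime}(s)^{k/2}} = 0, \quad \text{for every } k \geq 3,
\]
where $F(z) = g(e^z)$ is the fulcrum of $f$. This is exactly the hypothesis of Theorem \ref{thm: gaussianity all derivatives}, so an immediate appeal to that theorem yields that $f = e^g$ is Gaussian.

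There is essentially no obstacle here; the corollary is a formal consequence of the preceding proposition, and I would present it as a one-line deduction. The only point worth flagging is the split between the polynomial and transcendental cases for $g$, but both are already dealt with inside the proof of Proposition \ref{thm: converge_moments_e^g_transcedental} (through Lemma \ref{coro: lemma: derivativa_fulcrum_e^g_comparable}, and, in the transcendental case, Lemma \ref{lemma: entirefinitorderbound}), so nothing further needs to be verified.
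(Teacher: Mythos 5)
Your proposal is correct and matches the paper's own argument, which likewise deduces the corollary by combining Theorem \ref{thm: gaussianity all derivatives} with Proposition \ref{thm: converge_moments_e^g_transcedental}; your observation that the proof of that proposition actually establishes the vanishing of the fulcrum quotients (so Theorem \ref{thm: gaussianity all derivatives} applies directly) is exactly the intended one-line deduction.
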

\begin{proof} This result follows by combining Theorems \ref{thm: gaussianity all derivatives} and Proposition \ref{thm: converge_moments_e^g_transcedental}.
\end{proof}

%For the Gaussianity of the power series $f=e^g$, with $g$ as in the previous corollary, is enough to prove that
%\begin{align}\label{eq: example_f=e^g}
%\lim_{s\rightarrow+\infty}\frac{g^{\prime\prime\prime}(e^s)}{g^{\prime\prime}(e^s)^{3/2}} = 0\,,
%\end{align}
%see Theorem 3.3 in \cite{CFFM1}, but this is a weaker result: a priori, condition \eqref{eq: example_f=e^g} does not ensure that the moments of the family $(\breve{X}_t)$ converge to the moments of a standard normal random variable. 

\subsubsection{Exponentials of clans.} The exponential of a clan is always a Gaussian power series. In fact: all the integer moments of the normalized family $(\breve{X}_t)$ converge to the moments of a standard normal, as $t\uparrow R$. This situation covers some of the cases $f=e^g$, with $g \in \K$ an entire function of infinite order. 
\medskip

We recall here the definition of clan: we say that $f \in \K$ is a clan if 
\begin{align*}
\lim_{t\uparrow R}\frac{\sigma_f(t)}{m_f(t)} = 0\,,
\end{align*}
clans concentrate around the mean, that is, for clans we have $X_t/m_f(t) \stackrel{\P}{\rightarrow} 1$, as $t \uparrow R$ (convergence in probability). See \cite{CFFM3} for further details; a comprehensive analysis of clans is provided there.

\begin{lemmaletra}\label{lemma: clans_asymptotic} Let $g$ be a non-constant power series with non-negative coefficients and radius $R>0$. Assume that $g$ is a clan, then 
 \begin{align*}
 g^{(k)}(t) \sim \frac{g^{\prime}(t)^{k}}{g(t)^{k-1}}\,, \quad \text{ as } t\uparrow R\,.
 \end{align*}
\end{lemmaletra}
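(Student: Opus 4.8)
The plan is to work with the fulcrum $G(s) = g(e^s)$ of $g$ and translate the statement into one about derivatives of $G$. First I would recall that, by the definition of a clan, $\sigma_g(t)/m_g(t) \to 0$ as $t\uparrow R$; in fulcrum variables this reads $G''(s)/G'(s)^2 \to 0$ as $s\uparrow\ln R$, since $m_g(e^s)=G'(s)$ and $\sigma_g^2(e^s)=G''(s)$. The target asymptotic $g^{(k)}(t)\sim g'(t)^k/g(t)^{k-1}$ is equivalent, after multiplying by $e^{ks}$ and using $e^{js}g^{(j)}(e^s) = G^{(j)}(s) + (\text{lower-order terms in } G',\dots,G^{(j-1)})$ from Faà di Bruno applied to $G = g\circ\exp$ (the same bookkeeping as in Lemma \ref{lemma: derivatives_fulcrum_linear_combination}), to the statement
\[
G^{(k)}(s) \sim \frac{G'(s)^k}{G(s)^{k-1}}\,, \quad \text{ as } s\uparrow\ln R\,.
\]
So the real content is to prove this fulcrum asymptotic by induction on $k$, with the base cases $k=1,2$ being trivial ($G^{(1)}=G'$, and $G''/ (G'^2/G) = G''G/G'^2 \to 0\cdot$? — here one must be careful: $\sigma_g^2 < $ something, and actually for clans $G''G/G'^2$ need not tend to $1$, so the induction hypothesis must be set up directly as $G^{(k)}G^{k-1}/G'^k \to 1$, which for $k=2$ is an extra hypothesis one extracts, not from the clan condition alone but from the structure — I would instead take $k=1$ as the base and prove the inductive step produces the factor).

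The key step is the inductive passage. Assuming $g^{(k)}(t)\sim g'(t)^k/g(t)^{k-1}$, I would differentiate. Writing $h_k(t) := g^{(k)}(t) g(t)^{k-1} / g'(t)^k$, the hypothesis is $h_k(t)\to 1$. Differentiating the identity $g^{(k)} = h_k\, g'^k / g^{k-1}$ gives
\[
g^{(k+1)} = h_k' \frac{g'^k}{g^{k-1}} + h_k\left( k\frac{g'^{k-1}g''}{g^{k-1}} - (k-1)\frac{g'^{k+1}}{g^{k}}\right)
= h_k \cdot\Big({-}(k-1)\Big)\frac{g'^{k+1}}{g^k}\big(1 + o(1)\big)\,?
\]
— no: the dominant term among $g'^{k+1}/g^k$ and $g'^{k-1}g''/g^{k-1}$ must be identified. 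Dividing the second bracketed group by $g'^{k+1}/g^k$ gives $k\, g g''/g'^2 - (k-1)$; since $g$ is a clan, $g g''/g'^2 = G'' G/G'^2\to 0$, so this ratio tends to $-(k-1)$. Hence $g^{(k+1)} \sim -(k-1)\, g'^{k+1}/g^k + h_k'\, g'^k/g^{k-1}$. This does \emph{not} match the target $g'^{k+1}/g^k$ unless the $h_k'$ term dominates with the right sign. Therefore the main obstacle, and the heart of the argument, is controlling $h_k'$: one must show $h_k'(t)\, g(t)/g'(t) \to k$ (equivalently $t h_k'(t)/h_k(t) \cdot (\text{something})\to\cdots$), so that $h_k'\, g'^k/g^{k-1} \sim k\, g'^{k+1}/g^k$ and the sum gives $(k-(k-1))\,g'^{k+1}/g^k = g'^{k+1}/g^k$ as desired. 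I expect this to require an additional regularity input on clans beyond the bare definition — likely a Tauberian/log-derivative estimate of the form $t\,m_g'(t)/m_g(t)\to$ something, available from the cited analysis in \cite{CFFM3} — so the cleanest route may actually be to prove the fulcrum statement $G^{(k)}\sim G'^k/G^{k-1}$ by a direct induction in which one shows at each stage that $G^{(k)}/G^{(k-1)} \sim G'/G$ (a ``self-improving'' ratio), using $G''/G'^2\to0$ repeatedly; I would organize the writeup around that ratio $G^{(k)}/G^{(k-1)}$ rather than around $h_k$.

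To summarize the order of operations: (i) pass from $g$ to its fulcrum $G$ and record $m_g(e^s)=G'(s)$, $\sigma_g^2(e^s)=G''(s)$, hence the clan hypothesis becomes $G''/G'^2\to 0$; (ii) restate the goal as $G^{(k)}\sim G'^k/G^{k-1}$, justified via the Faà di Bruno bookkeeping of Lemma \ref{lemma: derivatives_fulcrum_linear_combination} that exchanges $g^{(j)}(e^s)$ and $G^{(j)}(s)$ up to lower-order terms; (iii) induct on $k$, at each step differentiating the previous asymptotic and showing the ``clan'' term $G G''/G'^2$ (and analogous higher ratios) is negligible so that $G^{(k)}/G^{(k-1)}\sim G'/G$; (iv) multiply the chain of ratios to recover $G^{(k)}\sim G'^k/G^{k-1}$ and translate back to $g$. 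The main obstacle is step (iii): ensuring that differentiating an asymptotic equivalence is legitimate here and that the correction terms genuinely vanish, which is exactly where the finer properties of clans from \cite{CFFM3} enter.
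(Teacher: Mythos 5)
There is a genuine gap, and in fact a substantive error, in your plan. The paper does not prove this lemma at all: it is quoted as a direct consequence of Corollary~3.5 of \cite{CFFM3}, so any self-contained argument has to supply real content — but yours mistranslates the clan hypothesis at the outset. You set $G(s)=g(e^s)$ and then assert $m_g(e^s)=G'(s)$, $\sigma_g^2(e^s)=G''(s)$; those identities hold for the fulcrum $\log g(e^s)$, not for $g\circ\exp$, and the two functions get conflated in your step (ii). More seriously, the clan condition does \emph{not} give $g(t)g''(t)/g'(t)^2\to 0$. A direct computation from $m_g=tg'/g$ and $\sigma_g^2=tm_g'$ yields
\begin{align*}
\frac{\sigma_g^2(t)}{m_g(t)^2}=\frac{1}{m_g(t)}+\frac{g(t)g''(t)}{g'(t)^2}-1\,,
\end{align*}
so for a clan (with $m_g(t)\to\infty$) one gets $g g''/g'^2\to 1$ — which is precisely the $k=2$ case of the lemma you are trying to prove. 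Your claimed limit $gg''/g'^2\to 0$ contradicts the lemma's own conclusion, and it is exactly this wrong limit that drives your inductive step: with it you find the bracket tending to $-(k-1)$ and are then forced to manufacture the missing factor $k$ out of $h_k'$, which is the wrong structure. With the correct limit the bracket already tends to $k-(k-1)=1$, and the remaining task is the opposite one: showing that the $h_k'$ term is \emph{negligible}.

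That remaining task is the real content of the lemma, and your proposal does not supply it. Differentiating an asymptotic equivalence ($h_k\to1$ tells you nothing about $h_k'$) is not legitimate without additional input — monotonicity, convexity, or the moment comparisons for power series with non-negative coefficients that \cite{CFFM3} develops — and you acknowledge this ("I expect this to require an additional regularity input on clans beyond the bare definition") without identifying what that input is or how it closes the induction. Likewise the claimed self-improving ratio $G^{(k)}/G^{(k-1)}\sim G'/G$ is essentially a restatement of the goal, not a mechanism for proving it. As written, the plan both relies on a false asymptotic and defers its key step to an unspecified external estimate, so it does not constitute a proof; the honest route, matching the paper, is to invoke Corollary~3.5 of \cite{CFFM3} (whose proof exploits the structure of power series with non-negative coefficients, not just the bare limit $\sigma_g/m_g\to0$).
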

This is a direct consequence of Corollary 3.5 in \cite{CFFM3}. 

\begin{theo}\label{thm: exponentialclanGaussian} Let $g$ be a non-constant power series with non-negative coefficients and radius $R>0$ and denote $f = e^g$ its exponential. Assume that $g$ is a clan and also that $\lim_{t\uparrow R}g(t)=+\infty$, then 
\begin{align*}
\lim_{s\uparrow\ln(R)}\frac{F^{(k)}(s)}{F^{\prime\prime}(s)^{k/2}} = 0\,, \quad \text{ for any } k\geq3\,,
\end{align*}
the integer moments of the family $\breve{X}_t$ converge, as $t \uparrow R$, to the integer moments of a standard normal random variable and $f$ is Gaussian. 
\end{theo}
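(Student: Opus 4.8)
The plan is to combine the asymptotic expansion for the derivatives of a clan (Lemma~\ref{lemma: clans_asymptotic}) with the structural description of the derivatives of the fulcrum of $f=e^g$ (Lemmas~\ref{lemma: derivatives_fulcrum_linear_combination} and~\ref{coro: lemma: derivativa_fulcrum_e^g_comparable}), and then invoke the moment criterion of Theorem~\ref{thm: gaussianity all derivatives}. First I would split into the polynomial and non-polynomial cases exactly as in the proof of Proposition~\ref{thm: converge_moments_e^g_transcedental}; since $g$ is a clan with $\lim_{t\uparrow R}g(t)=+\infty$, the interesting case is $g$ not a polynomial (if $R<\infty$ a clan is automatically not a polynomial, and if $R=+\infty$ the polynomial case is already covered and moreover a nonconstant polynomial cannot be a clan with $g\to\infty$ unless its degree forces $\sigma_g/m_g\to0$, which does happen, so that sub-case is handled by Lemma~\ref{coro: lemma: derivativa_fulcrum_e^g_comparable}'s second bullet). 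In the non-polynomial case, Lemma~\ref{coro: lemma: derivativa_fulcrum_e^g_comparable} gives $F^{(k)}(s)\asymp e^{ks}g^{(k)}(e^s)$ as $s\uparrow\ln R$, so it suffices to control the quotient $g^{(k)}(e^s)/g^{\prime\prime}(e^s)^{k/2}$.

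The key computation is to feed Lemma~\ref{lemma: clans_asymptotic} into this quotient. For $t=e^s\uparrow R$ we have $g^{(k)}(t)\sim g^{\prime}(t)^k/g(t)^{k-1}$ for every $k\geq1$, hence in particular $g^{\prime\prime}(t)\sim g^{\prime}(t)^2/g(t)$, and therefore
\begin{align*}
\frac{g^{(k)}(t)}{g^{\prime\prime}(t)^{k/2}}\sim \frac{g^{\prime}(t)^k/g(t)^{k-1}}{\big(g^{\prime}(t)^2/g(t)\big)^{k/2}}=\frac{g^{\prime}(t)^k}{g(t)^{k-1}}\cdot\frac{g(t)^{k/2}}{g^{\prime}(t)^k}=\frac{1}{g(t)^{k/2-1}}\,,
\end{align*}
which tends to $0$ as $t\uparrow R$ precisely because $k\geq3$ and $g(t)\to+\infty$. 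Tracking this through the $\asymp$ from Lemma~\ref{coro: lemma: derivativa_fulcrum_e^g_comparable} (which only costs a bounded multiplicative constant, harmless for a limit that is $0$) yields $\lim_{s\uparrow\ln R}F^{(k)}(s)/F^{\prime\prime}(s)^{k/2}=0$ for every $k\geq3$. The convergence of all integer moments of $(\breve X_t)$ to those of a standard normal is then immediate from Theorem~\ref{lemma: convergence_moments_fulcrum} (the moments of order $1$ and $2$ are $0$ and $1$ by construction of $\breve X_t$), and Gaussianity follows from Theorem~\ref{thm: gaussianity all derivatives}.

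I expect the only genuine obstacle to be bookkeeping around degenerate cases rather than any substantial analysis: making sure the hypothesis "$g$ is a clan and $g(t)\to+\infty$" is compatible with, and correctly routed through, the two bullets of Lemma~\ref{coro: lemma: derivativa_fulcrum_e^g_comparable} — in particular confirming that when $g$ happens to be a polynomial the conclusion still holds (it does, by the second bullet and the computation in Proposition~\ref{thm: converge_moments_e^g_transcedental}), and checking that $\sigma_f^2$ or $F^{\prime\prime}$ is eventually positive so that the quotients are well defined near $R$ (this is guaranteed since $g$ is nonconstant with nonnegative coefficients, so $F^{\prime\prime}(s)>0$ for $s$ large, cf.\ \eqref{eq:m and sigma in terms of f}). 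Everything else is a direct substitution, so the proof is short.
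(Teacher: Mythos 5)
Your proposal is correct and follows essentially the same route as the paper: the polynomial case is delegated to the earlier exponential-of-entire-function result, and in the non-polynomial case the comparison $F^{(k)}(s)\asymp e^{ks}g^{(k)}(e^s)$ from Lemma \ref{coro: lemma: derivativa_fulcrum_e^g_comparable} is combined with the clan asymptotics of Lemma \ref{lemma: clans_asymptotic} to get $F^{(k)}(s)/F^{\prime\prime}(s)^{k/2}\asymp 1/g(e^s)^{k/2-1}\to 0$ for $k\geq 3$, after which Theorems \ref{lemma: convergence_moments_fulcrum} and \ref{thm: gaussianity all derivatives} finish the argument. Your explicit computation of the quotient via $g^{\prime\prime}\sim g^{\prime 2}/g$ is exactly the content the paper invokes through Lemma \ref{lemma: clans_asymptotic}, so there is nothing substantively different.
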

\begin{proof} For polynomials we have already proved this claim, see Corollary \ref{coro: exps_entire_Gaussian}. Assume that $g$ is not a polynomial, using the asymptotic comparisons given by Lemma \ref{coro: lemma: derivativa_fulcrum_e^g_comparable} we find that
\begin{align*}
\frac{F^{(k)}(s)}{F^{\prime\prime}(s)^{k/2}} \asymp \frac{g^{(k)}(e^s)}{g^{\prime\prime}(e^s)^{k/2}}\,,\quad \text{ as } s\uparrow\ln(R).
\end{align*} 
Now Lemma \ref{lemma: clans_asymptotic} gives that 
\begin{align*}
\frac{g^{(k)}(e^s)}{g^{\prime\prime}(e^s)^{k/2}} \sim \frac{1}{g(e^s)^{k/2-1}}\,, \quad \text{ as } s\uparrow\ln(R).
\end{align*}
\\
We conclude the proof by using that $k\geq 3$ and $\lim_{t\uparrow R}g(t) = +\infty$. 
\end{proof}

\begin{example} Denote by $f(z) = e^z$ the exponential. For any $k \geq 1$ the $k$-th iterate of the exponential function
\begin{align*}
f_k(z) = f\circ \stackrel{k}{\dots} \circ f(z)
\end{align*}
is a Gaussian power series.
\medskip

Indeed: we prove first that the exponential $f(z)=e^z$ conforms a clan; we have that
\begin{align*}
	\lim_{t\rightarrow+\infty}\frac{\sigma_f(t)}{m_f(t)} = \lim_{t\rightarrow+\infty}\frac{\sqrt{t}}{t} = 0\,,
\end{align*}
see also \textnormal{\cite{CFFM3}[p.16].}
\medskip

For $k = 1$ observe that $f_1(z) = f(z) = e^z$. We already know that the exponential function is Gaussian; this follows, for instance, applying Theorem \ref{thm: gaussianity all derivatives}. See also \textnormal{\cite[p. 863]{CFFM1}}, where a direct proof of this fact is given.
\medskip

Now using that the exponential of a clan is always a clan, see \cite[pp. 19-20]{CFFM3}, we obtain that for any $k \geq 1$ the power series
\begin{align*}
f_k(z) = f\circ \stackrel{k}{\dots} \circ f(z)
\end{align*}
is a clan. Applying Theorem \ref{thm: exponentialclanGaussian}, we conclude that for any $k\geq1$ the power series $f_k(z)$ is Gaussian (for $k = 1$ the function $f_1(z) = e^z$ is Gaussian and for any $k \geq 2$ the power series $f_k(z)$ is the exponential of a clan)
\medskip

In fact: let $g$ be a non-constant power series with non-negative coefficients, which is not a polynomial, and has radius of convergence $R>0$. Assume that $g$ is a clan and also that $\lim_{t\uparrow R}g(t) = +\infty$, then Theorem \ref{thm: exponentialclanGaussian} gives that for any $k \geq 1$ the power series
\begin{align*}
	f_k(g(z)) = f\circ \stackrel{k}{\dots} \circ f(g(z))\,,
\end{align*}
is Gaussian. Here we use, iteratively, that the exponential of a clan is again a clan, see \cite[pp. 19-20]{CFFM3}, and also that any iterate of the exponential function conforms a clan.

\end{example}

\noindent\textbf{Funding.}  Research of V.\,J. Maci\'a was partially funded by grants MTM2017-85934-C3-2-P2
and PID2021-124195NB-C32 of Ministerio de Econom\'{\i}a y Competitividad of Spain, by the European Research Council Advanced Grant 834728,
by the Madrid Government (Comunidad de Madrid-Spain) under programme PRICIT, and by Grant UAM-Santander for the mobility of
young researchers 2023.

\end{document}